\newtheorem{theorem}{Theorem}[section]
\newtheorem{corollary}[theorem]{Corollary}
\newtheorem{lemma}[theorem]{Lemma}
\newtheorem{proposition}[theorem]{Proposition}
\newtheorem*{theorem*}{Theorem}
\newtheorem*{corollary*}{Corollary}
\newtheorem{introtheorem}{Theorem}
\newtheorem{introprop}[introtheorem]{Proposition}
\Crefname{introcorollary}{corollary}{corollary}
\theoremstyle{definition}
\newtheorem{definition}[theorem]{Definition}
\newtheorem{notation}[theorem]{Notation}
\newtheorem{setup}[theorem]{Setup}
\newtheorem*{conjecture*}{Conjecture}
\theoremstyle{remark}
\newtheorem{remark}[theorem]{Remark}
\newtheorem{calculation}[theorem]{Calculation}
\newtheorem*{acknowledgment}{Acknowledgment}
\newcommand{\fs}{\mathfrak{s}}
\newcommand{\fr}{\mathfrak{r}}
\newcommand{\ft}{\mathfrak{t}}
\newcommand{\cA}{\mathcal{A}}
\newcommand{\cB}{\mathcal{B}}
\newcommand{\cC}{\mathcal{C}}
\newcommand{\cD}{\mathcal{D}}
\newcommand{\cX}{\mathcal{X}}
\newcommand{\C}{\mathbf{C}}
\newcommand{\sF}{\mathscr{F}}
\newcommand{\sD}{\mathsf{D}}
\newcommand{\sJ}{\mathscr{I}}
\newcommand{\NN}{\mathbb{N}}
\newcommand{\EE}{\mathbb{E}}
\newcommand{\FF}{\mathbb{F}}
\newcommand{\GG}{\mathbb{G}}
\newcommand{\ZZ}{\mathbb{Z}}
\newcommand{\Ab}{\mathsf{Ab}}
\renewcommand{\mod}{\mathsf{mod}\hspace{.01in}}
\newcommand{\End}{\operatorname{End}\nolimits}
\newcommand{\id}{\operatorname{id}\nolimits}
\newcommand{\Id}{\operatorname{Id}\nolimits}
\newcommand{\op}{\operatorname{op}\nolimits}
\newcommand{\triv}{\operatorname{triv}}
\newcommand{\seg}{small extension groups\xspace}
\renewcommand{\subset}{\subseteq}
\def\Biggg#1{{\hbox{$\left#1\vbox to25\p@{}\right.\n@space$}}}
\newcommand{\set}[2]{\left \{ \, {#1} \, \middle \vert \, {#2} \, \right \}}
\newcommand{\s}{\mbox{}\xspace}
\newcommand{\coloneqq}{:=}
\crefname{figure}{diagram}{diagrams}
\Crefname{figure}{Diagram}{Diagrams}
 \newcommand{\xmapsfrom}[2][]{%
    \ext@arrow3095\leftarrowfill@{#1}{#2}\mapsfromchar
}
\def\Axioms#1{\expandafter\@Axioms\csname c@#1\endcsname}
\def\@Axioms#1{\ifcase#1\or (EA1)\or (EA1${}^{\op}$)\or (EA2)\or (EA2${}^{\op}$)\fi}
\AddEnumerateCounter{\Axioms}{\@Axioms}{First}
\numberwithin{equation}{section}
\begin{document}
   \title{$n$-extension closed subcategories of $n$-exangulated categories}

   \author{Carlo Klapproth}\address{Department of Mathematics, Aarhus University, 8000 Aarhus C, Denmark} 
   \email{carlo.klapproth@math.au.dk}
   \keywords{$n$-exangulated category, $n$-extension closed, $n$-exact category, $(n+2)$-angulated category, higher homological algebra, extriangulated category, exact category, condition (WIC)}
   \subjclass{18G99 (Primary) 18G15, 18G80 (Secondary)}
   \maketitle
   
   \begin{abstract}
      Let $n$ be a positive integer.
      We show that an $n$-extension closed subcategory of an $n$-exangulated category naturally inherits an $n$-exangulated structure through restriction of the ambient $n$-exangulated structure.
      Furthermore, we show that a strong version of the Obscure Axiom holds for $n$-exangulated categories, where $n \geq 2$.
      This allows us to characterize $n$-exact categories as $n$-exangulated categories with monic inflations and epic deflations.
      We also show that for an extriangulated category condition (WIC), which was introduced by Nakaoka and Palu, is equivalent to the underlying additive category being weakly idempotent complete.
      We then apply our results to show that $n$-extension closed subcategories of an $n$-exact category are again $n$-exact.
      Furthermore, we recover and improve results of Klapproth and Zhou.
   \end{abstract}

   \section*{Introduction}
    Generalisation and abstraction are very useful tools as they allow us to understand an area of mathematics as a whole rather than locally.
   Furthermore, they are very efficient as proofs need to be carried out only once.
   To this end, Herschend, Liu and Nakaoka recently introduced the notion of $n$-exangulated categories, see \cite{HLN21} and \cite{HLN22}.
   These categories generalize $n$-exact and $(n+2)$-angulated categories simultaneously, in a similar way as extriangulated categories generalize exact and triangulated categories.
   These structures also allow us to compare the recently emerged field of higher homological algebra to classic homological algebra.

   Extension closed subcategories are an important part of homological algebra and representation theory and they appear naturally.
   For example the torsion and torsion free class of a torsion pair or the aisle and coaisle of a $t$-structure are extension closed subcategories.
   Therefore, we are interested in studying properties of them.
   It is well known that any extension closed subcategory of an exact category inherits an exact structure from the exact structure of the ambient category in a natural way, see for example Bühler \cite[Lemma 10.20]{Bueh10}.
   The same does not hold for triangulated categories, but the larger class of extriangulated categories is again closed under taking extension closed subcategories by \cite[Remark 2.18]{NP19}.
   The proof of this result is straightforward.

   For $n$-exangulated categories, where $n \in \NN_{\geq 2}$, the situation is more difficult.
   It was shown in He--Zhou \cite[Theorem 1.1]{HZ21} that an $n$-extension closed subcategory of a Krull--Schmidt $n$-exangulated category inherits an $n$-exangulated structure from the $n$-exangulated structure of the ambient category in a natural way.
   However, this is not completely satisfying, as a large class of categories, for example the category of finitely generated abelian groups $\mod \ZZ$ or its bounded derived category $\sD^{b}(\mod \ZZ)$, are not Krull-Schmidt.
   Indeed the Krull--Schmidt property restricts endomorphism rings to be semiperfect, see Krause \cite[Corollary 4.4]{Kra15}.
   Using a completely different method than employed in He--Zhou \cite{HZ21} we are able to show the following theorem in full generality.

   \begin{introtheorem}[{See \Cref{thm:extensionclosed}}]\label{introthmA}
      Suppose that $(\cC, \EE, \fs)$ is an $n$-exangulated category and $\cA \subset \cC$ is an $n$-extension closed additive subcategory.
      Then $\cA$ inherits an $n$-exangulated structure from $(\cC, \EE, \fs)$ in a natural way.
   \end{introtheorem}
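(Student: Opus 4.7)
My plan is to equip $\cA$ with the restricted data $(\EE_\cA, \fs_\cA)$, where $\EE_\cA \coloneqq \EE|_{\cA^{\op} \times \cA}$ is automatically biadditive because $\cA$ is a full additive subcategory, and where $\fs_\cA(\delta)$ is defined as follows: given $\delta \in \EE_\cA(C, A)$ with $A, C \in \cA$, the $n$-extension closedness of $\cA$ provides a distinguished $n$-exangle $A \to X_1 \to \cdots \to X_n \to C$ representing $\fs(\delta)$ whose middle terms $X_i$ all lie in $\cA$. I would declare $\fs_\cA(\delta)$ to be the equivalence class of this representative, viewed as a complex in $\cA$.

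The first task is to verify that $\fs_\cA$ is well-defined as an exact realization. If $X^\bullet$ and $Y^\bullet$ are two such representatives of $\fs(\delta)$, then the axioms of $(\cC, \EE, \fs)$ produce a homotopy equivalence between them in $\cC$; but all objects involved lie in $\cA$ and $\cA$ is full, so the equivalence is automatically a morphism of diagrams in $\cA$. The same fullness observation transports the compatibility of $\fs$ with morphisms $(a,c)$ satisfying $a^* \delta = c_* \rho$ (required by the definition of exact realization) from $\cC$ down to $\cA$, because the completing middle morphisms live between objects that have already been chosen in $\cA$.

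The additivity axiom (R0) is immediate: a finite direct sum of distinguished $n$-exangles in $\cA$ is distinguished in $\cC$ with all terms in $\cA$, hence distinguished in $\cA$. Likewise, the axioms (EA2) and (EA2$^{\op}$) — producing morphisms of $n$-exangles completing compatible data — transfer without effort, since the needed morphisms connect objects of $\cA$ and $\cA$ is full in $\cC$.

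The principal obstacle is the axiom (EA1), requiring that the classes of inflations and deflations in $\cA$ be closed under composition. Given composable inflations $f\colon A \to B$ and $g\colon B \to D$ in $\cA$, axiom (EA1) in $\cC$ furnishes a distinguished $n$-exangle realizing the composite $gf$, together with the associated $3 \times 3$-type compatibility datum relating it to the $n$-exangles witnessing $f$ and $g$; however, its middle terms are only guaranteed to lie in $\cC$. The strategy is to observe that the resulting extension class lives in $\EE_\cA(-,-)$, so $n$-extension closedness produces a competing realization with middle terms in $\cA$, and then to transport the compatibility datum across the equivalence between the two realizations — which exists in $\cC$ and descends to $\cA$ by the fullness argument above. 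The delicate point is ensuring that this transport preserves the structural role of $f$ and $g$ as the first morphisms of the constituent $n$-exangles, so that the newly chosen $n$-exangle genuinely witnesses $gf$ as an inflation in $\cA$. I expect this to be where the new method diverges from the Krull--Schmidt-based approach of He--Zhou and where the bulk of the technical work is concentrated.
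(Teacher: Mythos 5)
Your setup of the restricted data and your handling of well-definedness, (R0)--(R2), (EA2) and (EA2$^{\op}$) match the paper (which delegates these to \cite[Proposition 4.2]{HLN22}), and you correctly locate the difficulty in (EA1). But your plan for (EA1) rests on a false premise: you assert that the extension class realized by the $\fs$-conflation starting with $gf$ ``lives in $\EE_\cA(-,-)$,'' so that $n$-extension closedness can be applied to it directly. It does not. If $\langle Y_\bullet, \rho \rangle$ is an $\fs$-distinguished $n$-exangle with $d^Y_0 = gf$, then $\rho \in \EE(Y_{n+1}, Y_0)$ where $Y_0 \in \cA$, but the \emph{end term} $Y_{n+1}$ is only an object of $\cC$ --- the obstruction is not merely that the middle terms $Y_2, \dots, Y_n$ lie in $\cC$, as you suggest. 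Since $Y_{n+1} \notin \cA$ in general, Definition of $n$-extension closedness gives you nothing to apply to $\rho$, and your ``competing realization with middle terms in $\cA$'' does not exist at this stage. The heart of the paper's argument (Lemma \ref{lem:extensionfix}) is precisely to repair this: it produces an object $Y' = X_{n+1} \oplus Z'_{n+1} \in \cA$ together with morphisms $s \colon Y' \to Y_{n+1}$ and $t \colon Y_{n+1} \to Y'$ with $(st)^\ast \rho = \rho$, so that the pulled-back extension $s^\ast \rho \in \EE(Y', Y_0)$ genuinely lies in $\FF = \EE_\cA$ and can be realized inside $\cA$. This requires a nontrivial comparison of the mapping cone of a good lift with a $\ft$-distinguished $n$-exangle built from the inflation $\left[\begin{smallmatrix} -d^X_1 & g\end{smallmatrix}\right]^{\top}$, and none of it is visible in your outline.

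The second gap is the point you flag as ``delicate'' but do not resolve. After realizing $s^\ast \rho$ by a $\ft$-distinguished $n$-exangle $\langle Y'_\bullet, s^\ast\rho\rangle$, the lift of $(\id_{Y_0}, t)$ forces $d^{Y'}_0 = t_1 \circ (gf)$; there is no transport argument that makes $gf$ itself the first differential. Instead one must invoke a cancellation principle: if $t_1 \circ (gf)$ is a $\ft$-inflation and the intermediate object lies in $\cA$, then $gf$ is a $\ft$-inflation. This is the relative Obscure Axiom (\Cref{prop:obscure}), which is itself a substantial result --- for $n \geq 2$ it is proved by adjoining a trivial complex $\triv_2(A)_\bullet$ and splitting off a direct summand of $n$-exangles (\Cref{lem:trivialsummand}), while for $n = 1$ it needs weak idempotent completeness (the paper sidesteps $n=1$ by citing \cite[Remark 2.18]{NP19}). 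Without both ingredients --- the replacement of $\rho$ by $s^\ast\rho$ with $Y' \in \cA$, and the Obscure Axiom to strip off $t_1$ --- your strategy does not close.
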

   
    On the journey to this result technical obstacles need to be overcome.
   We prove the following useful theorem.

   \begin{introtheorem}[Strong Obscure Axiom, see \Cref{cor:obscure}]\label{introthm:obscure}
      Let $(\cC, \EE, \fs)$ be an $n$-exangulated category and $\cC$ weakly idempotent complete or $n \in \NN_{\geq 2}$.
      If $gf$ is an $\fs$-inflation \mbox{then so is $f$}.
   \end{introtheorem}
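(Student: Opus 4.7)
The plan is to transform the distinguished $n$-exangle realizing $gf$ into one realizing $f$ via a shear construction, then to cancel an extraneous split summand.

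Starting from a distinguished $n$-exangle with first morphism $gf \colon X \to Z$, we form its direct sum with the split distinguished $n$-exangle associated to $\mathrm{id}_Y \colon Y \to Y$, obtaining a distinguished $n$-exangle whose first morphism is $\mathrm{diag}(gf, \mathrm{id}_Y) \colon X \oplus Y \to Z \oplus Y$. Through a suitable change of basis using the shear automorphisms $\left( \begin{smallmatrix} 1 & 0 \\ f & 1 \end{smallmatrix} \right)$ on $X \oplus Y$ and $\left( \begin{smallmatrix} 1 & -g \\ 0 & 1 \end{smallmatrix} \right)$ on $Z \oplus Y$, followed by a harmless swap of the summands in the target, the first morphism is rewritten as the upper-triangular block matrix $\left( \begin{smallmatrix} f & \mathrm{id}_Y \\ 0 & -g \end{smallmatrix} \right) \colon X \oplus Y \to Y \oplus Z$. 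Since passing to an isomorphic realization preserves the property of being an $\fs$-inflation, this block matrix is again an $\fs$-inflation.

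The second step is to split off the contribution of $\mathrm{id}_Y$ in order to isolate $f$ as an $\fs$-inflation in its own right. When $\cC$ is weakly idempotent complete, this cancellation is essentially the standard obscure axiom for the underlying extriangulated structure, where split monomorphisms are automatically inflations and can be extracted as direct summands. When $n \geq 2$, the additional middle terms $W_1, \ldots, W_n$ of the $n$-exangle realizing $gf$ provide enough flexibility to realize the cancellation purely in terms of the $n$-exangulated axioms (in particular (EA2) and its dual), so that weak idempotent completeness becomes unnecessary.

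The main obstacle is this cancellation step in the regime $n \geq 2$: producing the required split decomposition without appealing to weak idempotent completeness. This is expected to be the technical heart of the argument, relying on preparatory results on direct summands of distinguished $n$-exangles that are developed earlier in the paper; once such a cancellation is available, the conclusion that $f$ is an $\fs$-inflation follows at once from the block-matrix form above.
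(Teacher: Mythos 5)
Your first step is sound: summing the distinguished $n$-exangle realising $gf$ with the split exangle on $Y$ and applying the two shears does produce a distinguished $n$-exangle whose inflation is $\left[\begin{smallmatrix} f & \id_Y \\ 0 & -g \end{smallmatrix}\right] \colon X \oplus Y \to Y \oplus Z$. But the cancellation you then propose cannot be carried out as described: you cannot ``split off the contribution of $\id_Y$'' from this exangle so as to leave $f \colon X \to Y$ behind, because any direct-sum decomposition of the degree-one term $Y \oplus Z$ that removes a copy of $Y$ leaves $Z$, so the complementary summand of the exangle would begin with a morphism $X \to Z$, not $X \to Y$; indeed the only such splitting is the one you started from, recovering $gf$. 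The off-diagonal $\id_Y$ is precisely what obstructs the naive cancellation, and every row/column operation that kills it restores $gf$ on the diagonal. The route can be repaired: precompose with the split monomorphism $\left[\begin{smallmatrix}\id_X \\ 0\end{smallmatrix}\right] \colon X \to X \oplus Y$ (an $\fs$-inflation, as it realises a direct sum of trivial exangles) and use \ref{item:EA1} to conclude that $\left[\begin{smallmatrix} f \\ 0 \end{smallmatrix}\right] \colon X \to Y \oplus Z$, hence $\left[\begin{smallmatrix} 0 \\ f \end{smallmatrix}\right] \colon X \to Z \oplus Y$, is an $\fs$-inflation --- but you do not take this step, and it changes the shape of the cancellation that remains.

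More importantly, the remaining cancellation --- that an $\fs$-inflation of the form $\left[\begin{smallmatrix} 0 \\ f \end{smallmatrix}\right] \colon X \to A \oplus X_1'$ forces $f \colon X \to X_1'$ to be an $\fs$-inflation for $n \in \NN_{\geq 2}$ \emph{without} weak idempotent completeness --- is exactly the technical heart of the theorem, and you leave it unproved, deferring to unspecified ``preparatory results on direct summands'' and a vague appeal to \ref{item:EA2}. The paper proves this as \Cref{lem:trivialsummand}: one pads the exangle with the trivial summand $\langle \triv_2(A)_\bullet, 0\rangle$ --- which exists only because $n \geq 2$, and this is the one place the hypothesis enters --- and then exhibits an explicit biproduct decomposition in $\text{\AE}^{n+2}_{(\cC, \EE)}$ whose complementary summand realises $f$. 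Nothing in your sketch substitutes for this argument. For comparison, the paper's reduction to that situation is also different from yours: rather than summing with a split exangle (which enlarges the source to $X \oplus Y$), it uses a good lift of $(f,\id)$ and its mapping cocone via \ref{item:EA2op} (\Cref{lem:reversemappingcone}) to obtain $\left[\begin{smallmatrix} f \\ gf \end{smallmatrix}\right] \colon X \to Y \oplus Z$ with the source unchanged, after which a single target shear yields $\left[\begin{smallmatrix} 0 \\ f \end{smallmatrix}\right]$. Until you supply the cancellation lemma, the proof is incomplete in the case $n \in \NN_{\geq 2}$; the weakly idempotent complete case, which you reduce to the known extriangulated obscure axiom, is acceptable.
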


    It is now well known that for exact categories the strong version of the Obscure Axiom presented here is equivalent to the underlying additive category being weakly idempotent complete, see for example \cite[Propsition 7.6]{Bueh10}.
   However, when Quillen first defined exact categories, a weaker version of \Cref{introthm:obscure} with the assumption that $f$ admits a cokernel, was an important part of the original definition of exact categories, see Quillen \cite[Section 2]{Qui73}.
   It was discovered by Yoneda and later rediscovered by Keller that this axiom is a consequence of the other axioms for exact categories.

   It is remarkable that the strong Obscure Axiom holds for $n \in \NN_{\geq 2}$ without further assumption on the underlying category.
   The result can be interpreted as $n$-exangulated categories being more detached from the exact structure induced by the underlying additive category for $n \in \NN_{\geq 2}$ than they are for $n=1$.
   
   For extriangulated categories the strong Obscure Axiom corresponds to condition (WIC) introduced in \cite[Condition 5.8]{NP19}.
   This condition seems to be very important for several results in the theory of extriangulated categories, see for example \cite{ES22}, \cite{NP19} and \cite{WW22}.
   For example in \cite[Section 5]{NP19} it is used as a technical ingredient to show a bijection between Hovey twin cotorsion pairs and admissible model structures for extriangulated categories.
   It turns out that not only for exact but also for extriangulated categories the strong Obscure Axiom is equivalent to the underlying additive category being weakly idempotent complete.
   \begin{introprop}[See \Cref{prop:extri}]
      An extriangulated category satisfies condition \emph{(WIC)} if and only if it is weakly idempotent complete.
   \end{introprop}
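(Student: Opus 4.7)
The plan is to prove the two implications of the biconditional separately. The reverse direction follows essentially immediately from the Strong Obscure Axiom already established in the paper, while the forward direction is a short categorical argument exploiting the splittability of conflations whose deflation admits a section.

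For the implication \emph{weakly idempotent complete} $\Rightarrow$ \emph{(WIC)}, I apply \Cref{introthm:obscure} in the extriangulated case $n=1$: the weak idempotent completeness hypothesis gives directly that if $gf$ is an $\fs$-inflation, then so is $f$. The remaining half of (WIC), namely that $g$ is an $\fs$-deflation whenever $gf$ is, follows by applying the same theorem to the opposite extriangulated category, which is again extriangulated and again weakly idempotent complete (this last being a self-dual property).

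For the converse, I use the characterisation that an additive category is weakly idempotent complete precisely when every retraction admits a kernel. Let $r \colon B \to A$ be a retraction with section $s \colon A \to B$, so that $rs = 1_A$. Since $0 \to A \xto{1_A} A$ is a split conflation, the identity $1_A$ is an $\fs$-deflation; applying the deflation half of (WIC) to $rs = 1_A$ therefore yields that $r$ itself is an $\fs$-deflation. Hence there is a conflation $K \xto{i} B \xto{r} A$. Because the deflation $r$ admits the section $s$, this conflation is split by the standard splitting criterion for extriangulated categories from \cite{NP19}, so $B \cong K \oplus A$ with $r$ the canonical projection. The kernel of that projection is $K$, so $r$ has a kernel and $\cC$ is weakly idempotent complete.

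The substance of the proof is already carried by \Cref{introthm:obscure}. The main subtlety in the forward direction is that an inflation $i$ in a conflation is only a weak kernel of its deflation in general; this is dealt with by invoking the fact that the presence of a section of the deflation forces the conflation to split, which upgrades the weak kernel to an honest kernel. I do not anticipate any further obstacle beyond correctly invoking this splitting criterion and the duality argument used for the reverse direction.
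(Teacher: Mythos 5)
Your proof is correct. The direction \emph{weakly idempotent complete} $\Rightarrow$ \emph{(WIC)} follows exactly the paper's route: invoke the Strong Obscure Axiom (\Cref{cor:obscure}) together with its dual, using \Cref{rem:extri1} to identify extriangulated and $1$-exangulated categories. Where you genuinely diverge is the converse. The paper simply cites \cite[Proposition 3.33]{Msa22} or \cite[Corollary II.1.41]{Tat22} for \emph{(WIC)} $\Rightarrow$ \emph{weakly idempotent complete}, whereas you give a short self-contained argument: starting from a retraction $r$ with section $s$, you note $\id = rs$ is an $\fs$-deflation by axiom (R2), deduce from the deflation half of (WIC) that $r$ is an $\fs$-deflation, and then use the splitting criterion (\cite[Corollary 3.5]{NP19}, via the exact sequence $\cC(A,B) \to \cC(A,A) \xto{\delta_\sharp} \EE(A,K)$, which forces the attached extension to vanish) to conclude that the conflation splits and $r$ acquires an honest kernel. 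This is essentially the content of the cited external results, so your version buys self-containedness at the cost of a few lines; the paper's version buys brevity at the cost of two external references. Both are sound, and your handling of the weak-kernel-versus-kernel subtlety via the splitting criterion is exactly the right fix.
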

    This is particularly interesting, since any extriangulated category can be weakly idempotent completed, see for example \cite[Theorem 3.31]{Msa22} for small extriangulated categories, or \Cref{rem:extri1} and \cite[Theorem 5.5]{KMS22}.
   Hence, any extriangulated category is a full subcategory of an extriangulated category where condition (WIC) is satisfied.

   Another application of the strong Obscure Axiom is to characterise $n$-exangulated categories which arise from $n$-exact categories completely by properties of their inflations and deflations.
   We prove the following theorem.
   \begin{introtheorem}[See \Cref{thm:correspondence}]\label{introcorr}
      For an additive category $\cC$ there is a one-to-one correspondence
      \begin{align*}
          \left\{\parbox{13em}{\centering $n$-exact structures $(\cC, \cX)$ with \seg}\right\} &\xleftrightarrow{\text{\emph{1:1}}} \frac{\left\{\parbox{17em}{\centering $n$-exangulated structures $(\cC, \EE, \fs)$ with\\ monic $\fs$-inflations and epic $\fs$-deflations}\right\}}{\left\{\parbox{17em}{\centering equivalences of $n$-exanuglated categories of the form $(\Id_{\cC}, \Gamma)$}\right\}}.
      \end{align*}
   \end{introtheorem}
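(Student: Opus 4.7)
The plan is to construct explicit assignments in both directions and show they are mutually inverse up to the stated equivalence. For the forward direction, given an $n$-exact structure $(\cC, \cX)$ with small extension groups, I define $\EE \coloneqq \YExt_\cX^n(-, -)$ and let $\fs$ send an equivalence class of an $n$-extension to (the middle terms of) a representing $n$-exact sequence. This is the standard passage from $n$-exact to $n$-exangulated categories; the hypothesis of small extension groups is precisely what ensures $\EE$ takes values in abelian groups, while monicity of $\fs$-inflations and epicity of $\fs$-deflations follow directly from the definition of an $n$-exact category.

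For the reverse direction, given $(\cC, \EE, \fs)$ with monic $\fs$-inflations and epic $\fs$-deflations, I define $\cX$ to be the class of sequences appearing as realizations of some element of $\EE$. The main work lies in verifying the axioms of an $n$-exact category, most of which translate directly from the $n$-exangulated axioms combined with the monic/epic hypotheses. The key technical input is the strong Obscure Axiom (\Cref{introthm:obscure}): it supplies the cancellation statement that $n$-exact structures require when composing inflations and deflations, guaranteeing that any composable pair whose composite is an $\fs$-inflation has its first factor an $\fs$-inflation (and dually for deflations).

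Finally, I show the two assignments are mutually inverse modulo the stated equivalence relation. Passing forward then backward from an $n$-exact structure $(\cC, \cX)$ recovers $\cX$ on the nose, since the realizations in the Yoneda construction are by definition the $n$-exact sequences. Passing backward then forward from $(\cC, \EE, \fs)$, I construct a natural transformation $\Gamma \colon \EE \Rightarrow \YExt_{\cX}^{n}$ which sends each $\EE$-extension class to the Yoneda equivalence class of a representing realization; using monicity of inflations and epicity of deflations I then verify that $\Gamma$ is a natural isomorphism, so that the pair $(\Id_\cC, \Gamma)$ implements the required equivalence. The main obstacle will be the verification of the obscure-type and composition axioms for $\cX$ in the reverse direction, which essentially relies on \Cref{introthm:obscure} and a careful analysis of pushout and pullback diagrams in the $n$-exangulated setting.
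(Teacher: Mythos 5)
Your proposal follows essentially the same route as the paper: the forward construction is the homotopy-class/Baer-sum functor $\GG_\cX$ with its tautological realisation, the reverse construction takes the class of $\fs$-conflations with the strong Obscure Axiom supplying the missing $n$-exact axioms, and mutual inverseness is checked via the identity $\cX = \cX_{(\GG_\cX,\fr_\cX)}$ and a natural isomorphism $\Gamma \colon \EE \Rightarrow \GG_{\cX_{(\EE,\fs)}}$, $\delta \mapsto \fs(\delta)$. The only point stated too quickly is that the round trip recovers $\cX$ ``on the nose'': one needs that $\cX$ is closed under weak isomorphisms (part of Jasso's definition of an $n$-exact structure), since the realisation only remembers homotopy equivalence classes.
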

    For the relationship of extriangulated and exact categories a similar result is well-known, see \cite[Example 2.13]{NP19} and \cite[Corollary 3.18]{NP19}.
   However, for $n$-exangulated categories, where $n \in \NN_{\geq 2}$, our strong Obscure Axiom is the missing ingredient to make the bijection constructed in \cite[Section 4.3]{HLN21} complete.

   The combination of \Cref{introthmA} and \Cref{introcorr} allows us to show the following theorem.
   \begin{introtheorem}[{See \Cref{cor:extensionclosed2}}]
      Suppose $(\cC, \cX)$ is an $n$-exact category with \seg and $\cA \subset \cC$ is an $n$-extension closed additive subcategory.
      Then $\cA$ inherits an $n$-exact structure in a natural way.
   \end{introtheorem}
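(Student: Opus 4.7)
The proof strategy is a three-step sandwich: apply the $n$-exact $\leftrightarrow$ $n$-exangulated correspondence of \Cref{introcorr} to leave the realm of $n$-exact categories, apply the restriction result \Cref{introthmA}, and then return to the $n$-exact world via the reverse direction of \Cref{introcorr}. The two main theorems advertised in the introduction do almost all of the work; the only genuine verification is that the extra ``monic inflations / epic deflations'' property is preserved under restriction to a full subcategory.

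First, starting from the $n$-exact structure $(\cC, \cX)$ with small extension groups, \Cref{introcorr} produces an $n$-exangulated structure $(\cC, \EE, \fs)$ on the same underlying category in which every $\fs$-inflation is monic and every $\fs$-deflation is epic. Since $\cA \subset \cC$ is $n$-extension closed, \Cref{introthmA} supplies an $n$-exangulated structure $(\cA, \EE', \fs')$ inherited from $(\cC, \EE, \fs)$ by restriction, where $\EE'(C, A) \subset \EE(C, A)$ consists of those extensions whose $\fs$-realization has all terms in $\cA$, and $\fs'$ is the restriction of $\fs$.

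It then remains to check that $(\cA, \EE', \fs')$ satisfies the hypotheses needed to invoke the reverse direction of \Cref{introcorr}. An $\fs'$-inflation $f \colon X \to Y$ in $\cA$ is, by construction, an $\fs$-inflation in $\cC$, and hence monic in $\cC$; since $\cA$ is a full subcategory of $\cC$, being monic in $\cC$ immediately implies being monic in $\cA$ (the ambient category supplies strictly more test morphisms). Dually, every $\fs'$-deflation is epic in $\cA$. The smallness of the extension groups is likewise inherited, as $\EE'(C, A)$ is a subgroup of the set $\EE(C, A)$. Applying \Cref{introcorr} in the reverse direction to $(\cA, \EE', \fs')$ yields the desired $n$-exact structure on $\cA$, naturally inherited from $(\cC, \cX)$.

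I do not expect a serious obstacle: the key conceptual content is already encoded in \Cref{introthmA} (inheritance of the $n$-exangulated structure) and \Cref{introcorr} (the correspondence, which in turn depends on \Cref{introthm:obscure}). The only nontrivial aspect is philosophical, namely that the monic/epic properties needed for the correspondence do not need to be re-established from scratch in $\cA$, because they descend freely from $\cC$ along the inclusion of a full subcategory.
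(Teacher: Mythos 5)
Your proposal is correct and follows essentially the same route as the paper: pass to the associated $n$-exangulated structure $(\cC,\GG_\cX,\fr_\cX)$ via the correspondence, restrict to $\cA$ using the inheritance theorem, note that monicity of inflations and epicity of deflations descend to the full subcategory (and that smallness of extension groups is inherited), and return to the $n$-exact world via the reverse direction of the correspondence. The only step the paper makes explicit that you elide is the short verification that ``$n$-extension closed'' in the $n$-exact sense coincides with ``$n$-extension closed'' in the induced $n$-exangulated sense (\Cref{lem:extclosedcoincide1}), together with the identification of the resulting conflation class with $\cX \cap \C^{n+2}_{\cA}$.
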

    Furthermore, we improve results \cite[Theorem 1.2]{Zho22} and \cite[Theorem I]{Kla21} about $n$-extension closed subcategories of $(n+2)$-angulated categories, see \Cref{cor:thefirstcorollary} and \Cref{recoverkla}.
   
   \section{Conventions and notation}
    Throughout this paper we will use the notion of $n$-exangulated categories.
   We refer to \cite[Section 2]{HLN21} for the definition and \cite{HLN21} and \cite{HLN22} for an introduction.
   In \Cref{sec:nex} and \Cref{sec:nang} we consider $n$-exact categories.
   We refer to Jasso \mbox{\cite[Section 4]{Jas16}} for an introduction.
   In \Cref{sec:nang} we will study $(n+2)$-angulated categories.
   We refer to Geiss--Keller--Oppermann \cite[Section 2]{GKO13} for the definition. 
   
   For the rest of this section suppose we are given an arbitrary additive category $\cD$ and $n \in \NN_{\geq 1}$.
   We recall the following definitions.
   \begin{definition}
      A subcategory $\cB \subset \cD$ is called an \emph{additive subcategory} if $\cB \subset \cD$ is full and closed under finite direct sums.
      That means $\cB$ is closed under isomorphisms, $0 \in \cB$ and for $B, B' \in \cB$ the biproduct $B \oplus B'$ is in $\cB$.
   \end{definition}

    Notice that for the scope of this paper, we do \emph{not} require an additive subcategory to be closed under direct summands.
   Note, that we assume additive subcategories to be full. 

   \begin{remark}
   We will use cohomological degrees for complexes, i.e.\s the differentials go from lower to higher degree.
   On the other hand, we use homological notation, i.e.\s we use subscripts instead of superscripts, to avoid confusion with powers of morphisms.
   \end{remark}

   \begin{definition}[{\cite[Definition 2.7]{HLN21}}]
      Let $\C^{n+2}_\cD$ be the full subcategory of the category of complexes over $\cD$ consisting of all complexes concentrated in degrees $0, \dots, n+1$. 
      We denote objects in $\C^{n+2}_\cD$ by
      \[ X_\bullet \colon \hspace{1cm} \begin{tikzcd} X_0 \ar[r, "d^X_0"] & X_1 \ar[r, "d^X_1"] & \cdots \ar[r, "d^X_{n-1}"] & X_n \ar[r, "d^X_n"] & X_{n+1} \end{tikzcd}\]
      and morphisms in $\C_{\cD}^{n+2}(X_\bullet, Y_\bullet)$ by $f_\bullet = (f_0, f_1, \dots, f_n, f_{n+1}) \colon X_\bullet \to Y_\bullet$.
   \end{definition}
  
    We will use the following special complexes.

    \begin{definition}\label{def:trivialcomplex}
     For $X \in \cD$ and $i = 0, \dots, n$ let $\triv_i(X)_\bullet \in \C^{n+2}_{\cD}$ denote the complex
     \[\triv_i(X)_\bullet \colon \hspace{1cm} 
      \begin{tikzcd}[ampersand replacement=\&, column sep = small]
        0 \& \cdots \& 0 \& X \& X \& 0 \& \cdots \& 0
       \arrow[Rightarrow, no head, from=1-4, to=1-5]
        \arrow[from=1-3, to=1-4]
        \arrow[from=1-5, to=1-6]
       \arrow[from=1-6, to=1-7]
        \arrow[from=1-7, to=1-8]
        \arrow[from=1-1, to=1-2]
       \arrow[from=1-2, to=1-3]
      \end{tikzcd}\]
      with $\triv_i(X)_j = 0$ for $j \neq i, i+1$ and $\triv_i(X)_i = X = \triv_i(X)_{i+1}$ as well as $d^{\triv_i(X)}_i = \id_X$.
   \end{definition}

   \begin{definition}[{\cite[Definition 2.27]{HLN21}}]
      Let $f_\bullet \in \C^{n+2}_{\cD}(X_\bullet, Y_\bullet)$ be a morphism of complexes.
%      \[\begin{tikzcd}[ampersand replacement=\&]
%       {X_0} \& {X_1} \& \cdots \& {X_n} \& X_{n+1} \\
%       {Y_0} \& {Y_1} \& \cdots \& {Y_n} \& Y_{n+1}.
%       \arrow["{f_0}", from=1-1, to=2-1]
%       \arrow["{f_1}", from=1-2, to=2-2]
%       \arrow["{d^X_0}", from=1-1, to=1-2]
%       \arrow["{d^Y_0}", from=2-1, to=2-2]
%       \arrow["{d^X_1}", from=1-2, to=1-3]
%       \arrow["{d^Y_1}", from=2-2, to=2-3]
%       \arrow["{d^X_{n-1}}", from=1-3, to=1-4]
%       \arrow["{d^Y_{n-1}}", from=2-3, to=2-4]
%       \arrow["{d^Y_n}", from=2-4, to=2-5]
%       \arrow["{d^X_n}", from=1-4, to=1-5]
%       \arrow["{f_{n+1}}", no head, from=1-5, to=2-5]
%       \arrow["{f_n}", from=1-4, to=2-4]
%      \end{tikzcd}\]
      If $A \coloneqq X_0 = Y_0$ and $f_0 = \id_A$ we denote by 
      \[M^f_\bullet \colon \hspace{1cm} \begin{tikzcd}[ampersand replacement=\&, column sep = huge, cramped]
        {X_1} \&[-1.8em] {X_2 \oplus Y_1} \& \cdots \&[+1em] {X_{n+1} \oplus Y_{n}} \&[-.2em] {Y_{n+1}}
        \arrow["{\left[\begin{smallmatrix}-d^X_1 \\ f_1\end{smallmatrix}\right]}", from=1-1, to=1-2]
        \arrow["{\left[\begin{smallmatrix}-d^X_{n} & 0 \\ f_{n} & d^Y_{n-1}\end{smallmatrix}\right]}", from=1-3, to=1-4]
        \arrow["{\left[\begin{smallmatrix}f_{n+1} & d^Y_n\end{smallmatrix}\right]}", from=1-4, to=1-5]
        \arrow["{\left[\begin{smallmatrix}-d^X_2 & 0 \\ f_2 & d^Y_1\end{smallmatrix}\right]}", from=1-2, to=1-3]
      \end{tikzcd}\]
      the \emph{mapping cone of $f_\bullet$}.
      Dually, if $C \coloneqq X_{n+1} = Y_{n+1}$ and $f_{n+1} = \id_C$ we denote by 
      \[N^f_\bullet \colon \hspace{1cm} \begin{tikzcd}[ampersand replacement=\&, column sep = huge, cramped]
        {X_0} \&[-1.8em] {X_1 \oplus Y_0} \& \cdots \&[+1em] {X_n \oplus Y_{n-1}} \&[-.2em] {Y_n}
        \arrow["{\left[\begin{smallmatrix}d^X_0 \\ f_0\end{smallmatrix}\right]}", from=1-1, to=1-2]
        \arrow["{\left[\begin{smallmatrix}d^X_1 & 0 \\ f_1 & -d^Y_0\end{smallmatrix}\right]}", from=1-2, to=1-3]
        \arrow["{\left[\begin{smallmatrix}d^X_{n-1} & 0 \\ f_{n-1} & -d^Y_{n-2}\end{smallmatrix}\right]}", from=1-3, to=1-4]
        \arrow["{\left[\begin{smallmatrix}f_n & -d^Y_{n-1}\end{smallmatrix}\right]}", from=1-4, to=1-5]
      \end{tikzcd}\]
      the \emph{mapping cocone of $f_\bullet$}.
   \end{definition}

   \begin{definition}[{\cite[Defintion 2.17]{HLN21}}]
      For $A,C \in \cD$ let $\smash{\C^{n+2}_{(\cD; A,C)}}$ be the subcategory of $\C^{n+2}_{\cD}$ where objects are complexes $X_\bullet$ with $X_0 = A$ and $X_{n+1} = C$ and where morphisms $f_\bullet \in \smash{\C^{n+2}_{(\cD; A, C)}(X_\bullet, Y_\bullet)}$ are morphisms $f_\bullet \in\C^{n+2}_{\cD}(X_\bullet, Y_\bullet)$ with $f_0 = \id_A$ and $f_{n+1} = \id_C$.
   \end{definition}
  
   \begin{definition}[{\cite[Definition 2.17]{HLN21}}]
      For $A,C \in \cD$ and $X_\bullet, Y_\bullet \in \C^{n+2}_{(\cD; A,C)}$ we say $f_\bullet, g_\bullet \in \smash{\C^{n+2}_{(\cD; A, C)}}(X_\bullet, Y_\bullet)$ are \emph{homotopy equivalent} if $f_\bullet - g_\bullet$ is zero homotopic as a morphism of complexes $X_\bullet \to Y_\bullet$.
   \end{definition}
    Homotopy equivalence induces an equivalence relation on morphisms and objects of $\smash{\C^{n+2}_{(\cD; A, C)}}$ for $A,C \in \cD$, see \cite[Definition 2.17]{HLN21}.
   We use a different notation than \cite{HLN21} to denote the equivalence class of an object in $\C^{n+2}_{(\cD;A,C)}$.

   \begin{notation}
      For any pair of objects $A,C \in \cD$ and any $X_\bullet \in \smash{\C^{n+2}_{(\cD; A, C)}}$ we denote by $[X_\bullet]_{\cD}$ the homotopy equivalence class of $X_\bullet$ in $\C^{n+2}_{(\cD,A,C)}$.
   \end{notation}

%   
%   
%   Remark that, by \cite[Remark 2.10]{Jas16}, the following \Cref{def:equivalence} may not induce an equivalence relation without further structure on $\cD$.
%   
%   \begin{definition}[{\cite[Definition 2.9]{Jas16}}]\label{def:equivalence}
%       A morphisms $f_\bullet \in \cC^{n+2}_{(\cD,A,C)}(X_\bullet, Y_\bullet)$ is called \emph{equivalence of $n$-exact sequences}.
%   \end{definition}
%
    Now, suppose we are additionally given an arbitrary biadditive functor $\GG \colon \cD^{\op} \times \cD \to \Ab$.
%   Recall the following notation used in \cite{HLN21}.

   \begin{notation}
      For any pair $C,A \in \cD$ the elements of $\GG(C,A)$ are called \emph{$\GG$-extensions}.
      We will denote by ${}_{A} 0_{C}$ the neutral element of $\GG(C,A)$.
      We will often simply write $0$ instead of ${}_A 0_C$.
      Furthermore, if there is no risk of confusion, we will denote 
      \[\text{$a_\ast \coloneqq \GG(C,a) \colon \GG(C,A) \to \GG(C, B)$ and $c^\ast \coloneqq \GG(c, A) \colon \GG(C,A) \to \GG(D, A)$}\] 
      for $A,B,C,D \in \cD$, $a \in \cD(A,B)$ and $c \in \cD(D, C)$
   \end{notation}
   
   \begin{definition}[{\cite[Definition 2.3]{HLN21}}]
      For $\GG$-extensions $\delta \in \GG(C,A)$ and $\rho \in \GG(D,B)$ a \emph{morphism of $\GG$-extensions $(a,c) \colon \delta \to \rho$} is a tuple consisting of a morphism $a \in \cD(A,B)$ and $c \in \cD(C,D)$ with $a_\ast \delta = c^\ast \rho$.
   \end{definition}

   \begin{definition}[{\cite[Definition 2.9]{HLN21}}]
      We define a category $\text{\AE}^{n+2}_{(\cD, \GG)}$ as follows. 
      \begin{enumerate}
         \item Objects are tuples $\langle X_\bullet, \delta \rangle$ consisting of $X_\bullet \in \C^{n+2}_{\cD}$ and $\delta \in \GG(X_{n+1}, X_0)$ satisfying $(d^X_0)_\ast \delta = 0$ and $(d^X_n)^\ast \delta = 0$.
         We call $\langle X_\bullet, \delta \rangle$ a \emph{$\GG$-attached complex} and denote
         \[ \langle X_\bullet, \delta \rangle \colon \hspace{1cm} \begin{tikzcd} X_0 \ar[r, "d^X_0"] & X_1 \ar[r, "d^X_1"] & \cdots \ar[r, "d^X_{n-1}"] & X_n \ar[r, "d^X_n"] & X_{n+1} \ar[r, dashed, "\delta"] &{}. \end{tikzcd}\]
         \item Morphisms $f_\bullet \in \smash{\text{\AE}^{n+2}_{(\cD, \GG)}}(\langle X_\bullet, \delta \rangle, \langle Y_\bullet, \rho \rangle)$ are morphisms $f_\bullet \in \smash{\C^{n+2}_{\cD}(X_\bullet, Y_\bullet)}$ such that $(f_0, f_{n+1}) \colon \delta \to \rho$ is a morphism of $\GG$-extensions.
         \item Composition is the same as in $\C^{n+2}_{\cD}$.
      \end{enumerate}
   \end{definition}
    We want to remark that $\text{\AE}^{n+2}_{(\cD, \GG)}$ is an additive category.

   \begin{notation}[{\cite[Definition 2.11]{HLN21}}]
      If there is no risk of confusion we denote
      \[\text{$\delta_\sharp \colon \cD(X,C) \to \GG(X,A),\, f \mapsto f^\ast \delta$ and $\delta^\sharp \colon \cD(A, X) \to \GG(C, X),\, g \mapsto g_\ast \delta$}\]
      for any $X \in \cD$ and $\delta \in \GG(C,A)$.
   \end{notation}

   \begin{definition}[{\cite[Definition 2.13]{HLN21}}]
      An object $\langle X_\bullet, \delta \rangle \in \text{\AE}^{n+2}_{(\cD,\GG)}$ is called an \emph{$n$-exangle} if for all $X \in \cD$ the sequences
      \[\begin{tikzcd}[ampersand replacement=\&, column sep = large]
        {\cD(X,X_0)} \& {\cD(X,X_1)} \& \cdots \& {\cD(X,X_{n+1})} \&[-2em] {\GG(X,X_0)}
        \arrow["{\cD(X,d^X_0)}", from=1-1, to=1-2]
        \arrow["{\cD(X,d^X_1)}", from=1-2, to=1-3]
        \arrow["{\cD(X,d^X_n)}", from=1-3, to=1-4]
        \arrow["{\delta_\sharp}", from=1-4, to=1-5]
      \end{tikzcd}\]
      and 
      \[\begin{tikzcd}[ampersand replacement=\&, column sep = large]
        {\cD(X_{n+1},X)} \& {\cD(X_n, X)} \& \cdots \& {\cD(X_0, X)} \&[-2em] {\GG(X_{n+1}, X)}
        \arrow["{\cD(d^X_n,X)}", from=1-1, to=1-2]
        \arrow["{\cD(d^X_{n-1},X)}", from=1-2, to=1-3]
        \arrow["{\cD(d^X_0,X)}", from=1-3, to=1-4]
        \arrow["{\delta^\sharp}", from=1-4, to=1-5]
      \end{tikzcd}\]
      are exact in $\Ab$. A morphism of $n$-exangles is just a morphism in $\text{\AE}^{n+2}_{(\cD, \GG)}$.
   \end{definition}
    In particular, if $\langle X_\bullet, \delta \rangle$ is an $n$-exangle then $d^X_{i+1}$ is a weak cokernel of $d^X_i$ for any $i =0, \dots, n-1$.
   Dually, $d^X_{i-1}$ is a weak kernel of $d^X_i$ for any $i=1,\dots,n$.

   \begin{definition}[{\cite[Definition 2.22]{HLN21}}]
      An \emph{exact realisation of $\GG$} is a map $\fr$ that assigns to each pair $A,C \in \cD$ and each $\delta \in \GG(A,C)$ a homotopy equivalence class $\fr(\delta)$ of an object in $\C^{n+2}_{(\cD,A,C)}$ such that the following axioms hold.
      \begin{enumerate}[label={(R\arabic*)}]
         \setcounter{enumi}{-1}
         \item For any objects $A,B,C,D \in \cD$, any $\GG$-extensions $\delta \in \GG(C, A)$ and $\rho \in \GG(D,B)$, any complexes $X_\bullet \in \cC^{n+2}_{(\cD; A,C)}$ with $[X_\bullet]_{\cD} = \fr(\delta)$ and $Y_\bullet \in \cC^{n+2}_{(\cD; B,D)}$ with $[Y_\bullet]_{\cD} = \fr(\rho)$ and any morphism $(a, c) \colon \delta \to \rho$ of $\GG$-extensions there exists an $f_\bullet \colon X_\bullet \to Y_\bullet$ with $f_0 = a$ and $f_{n+1} = c$.
         We call $f_\bullet$ a \emph{lift} of $(a,c)$.
         \item If $\fr(\delta) = [X_\bullet]_{\cD}$ for some $A,C \in \cD$, $\delta \in \GG(C, A)$ and $X_\bullet \in \C^{n+2}_{(\cD; A, C)}$ then $\langle X_\bullet, \delta \rangle$ is an $n$-exangle.
         \item For $A,C \in \cD$ we have $\fr({}_A 0_0) = [\triv_0(A)_\bullet]_{\cD}$ and $\fr({}_0 0_C) = [\triv_n(C)_\bullet]_{\cD}$.
      \end{enumerate}
      For any pair $A,C \in \cD$ and $\GG$-extension $\delta \in \GG(A,C)$ we call any $X_\bullet \in \C^{n+2}_{(\cD,A,C)}$ with $[X_\bullet] = \fr(\delta)$ an \emph{$\fr$-realisation} of $\delta$.
   \end{definition}

   \begin{definition}
      Suppose $\fr$ is an exact realisation of $\GG$. For any objects $A,C \in \cD$, any $\GG$-extension $\delta \in \GG(C,A)$ and any $\fr$-realisation $X_\bullet$ of $\delta$ we call
      \begin{enumerate}
         \item the $n$-exangle $\langle X_\bullet, \delta \rangle$ an \emph{$\fr$-distinguished $n$-exangle} and
         \item $X_\bullet$ an \emph{$\fr$-conflation}, $d^X_0$ an \emph{$\fr$-inflation} as well as $d^X_{n}$ an \emph{$\fr$-deflation}.
      \end{enumerate}
      A \emph{morphism of $\fr$-distinguished $n$-exangles} is just a morphism of $n$-exangles.
   \end{definition}

    In the following definition we divide \cite[Definition 2.32(EA1)]{HLN21} into two separate statements (EA1) and (EA1${}^{\op}$).
   \begin{definition}[{\cite[Definition 2.32]{HLN21}}]
      An \emph{$n$-exangulated category} is a triplet $(\cD, \GG, \fr)$ where $\cD$ is an additive category, $\GG \colon \cD^{\op} \times \cD \to \Ab$ is a biadditive functor and $\fr$ is an exact realisation of $\GG$ such that the following axioms hold.
      \begin{enumerate}[leftmargin = 4\parindent, label={\Axioms*}]
         \item If $f \in \cD(X,Y)$ and $g \in \cD(Y, Z)$ are $\fr$-inflations then so is $gf \in \cD(X,Z)$.\label{item:EA1} 
         \item If $f \in \cD(X,Y)$ and $g \in \cD(Y, Z)$ are $\fr$-deflations then so is $gf \in \cD(X,Z)$.\label{item:EA1op}
         \item For any morphism $c \in \cD(X_{n+1}, Y_{n+1})$ and any pair of $\fr$-distinguished $n$-exangles $\langle X_\bullet, c^* \rho \rangle$ and $\langle Y_\bullet, \rho \rangle$ with $A \coloneqq Y_0 = X_0$ there is a lift $f_\bullet \colon X_\bullet \to Y_\bullet$ of $(\id_A, c)$ such that $\langle M^f_\bullet, (d^X_0)_\ast \rho \rangle$ is $\fr$-distinguished.
               We call $f_\bullet$ a \emph{good lift} of $(\id_A, c)$.\label{item:EA2}
         \item For any morphism $a \in \cD(X_{0}, Y_{0})$ and any pair of $\fr$-distinguished $n$-exangles $\langle X_\bullet, \delta \rangle$ and $\langle Y_\bullet, a_\ast \delta \rangle$ with $C \coloneqq Y_{n+1} = X_{n+1}$ there is a lift $f_\bullet \colon X_\bullet \to Y_\bullet$ of $(a, \id_C)$ such that $\langle N^f_\bullet, (d^Y_{n})^\ast \rho \rangle$ is $\fr$-distinguished.
               We call $f_\bullet$ a \emph{good lift} of $(a, \id_C)$.\label{item:EA2op}
      \end{enumerate}
   \end{definition}

   \begin{remark}\label{rem:extri1}
      By \cite[Proposition 4.3]{HLN21} a triplet $(\cD, \GG, \fr)$ is a $1$-exangulated category if and only if it is an extriangulated category in the sense of \cite{NP19}.
      We therefore may use the term extriangulated category synonymously with the term $1$-exangulated category.
   \end{remark}

   \begin{definition}[{\cite[Definition 4.1]{HLN22}}]\label{def:exangulatedextclosed}
      An additive subcategory $\cB \subset \cD$ of an \mbox{$n$-exangulated} category $(\cD, \GG, \fr)$ is called \emph{$n$-extension closed} if for all $A,C \in \cB$ and $\delta \in \GG(C, A)$ there is an $\fr$-distinguished $n$-exangle $\langle X_\bullet, \delta \rangle$ with $X_i \in \cB$ for $i = 0, \dots, n+1$.
   \end{definition}
   
   \begin{remark}
         The notion of $1$-extension closed additive subcategories coincides with the notion of extension closed subcategories of \cite[Definition 2.17]{NP19} as any two extriangles realizing the same extension have isomorphic terms by \cite[Lemma 4.1]{HLN21} and additive subcategories are closed under isomorphisms.
    \end{remark}

    We recall the notion of an $n$-exangulated functor from Bennett-Tennenhaus--Shah.
   \begin{definition}[{\cite[Definition 2.32]{BTS21}}]
      Let $(\cD, \GG, \fr)$ and $(\cD', \GG', \fr')$ be $n$-exangulated categories.
      An \emph{$n$-exangulated functor} $(\sF, \Gamma) \colon (\cD, \GG, \fr) \to (\cD', \GG, \fr')$ is a tuple consisting of an additive functor $\sF \colon \cD \to \cD'$ and a natural transformation $\Gamma \colon \GG(-,-) \Rightarrow \GG'(\sF-, \sF-)$, such that $[X_\bullet]_{\cD} = \fr(\delta)$ implies $[\sF(X_\bullet)]_{\cD'} = \fr'(\Gamma_{C,A}(\delta))$ for all $A,C \in \cD$, $\delta \in \GG(A, C)$ and $X_\bullet \in \C^{n+2}_{(\cD;A,C)}$.

%      We call an $n$-exangulated functor $(\sF, \Gamma) \colon (\cC, \EE, \fs) \to (\cC', \EE', \fs')$ \emph{full} \emph{fully faithful} if $\sF \colon \cC \to \cC'$ is fully faithful.
   \end{definition}

    Bennett-Tennenhaus--Haugland--Sandøy--Shah \cite[Definition 4.9]{BTHSS22} introduced the notion of $n$-exangulated equivalences.
   Using \cite[Proposition 4.11]{BTHSS22} we obtain the following equivalent definition.
   \begin{definition}
      A functor $(\sF, \Gamma) \colon (\cD, \GG, \fr) \to (\cD', \GG, \fr')$ of $n$-exangulated categories $(\cD, \GG, \fr)$ and $(\cD', \GG', \fr')$ is called an \emph{$n$-exangulated equivalence} if $\sF \colon \cD \to \cD'$ is an equivalence and $\Gamma \colon \GG(-,-) \Rightarrow \GG'(\sF-, \sF-)$ is a natural isomorphism.
   \end{definition}

    From now we assume the following global \Cref{stp:thesetup}, unless \emph{explicitly} stated otherwise.
   \begin{setup}\label{stp:thesetup}
   Suppose $n \in \NN_{\geq 1}$.
   Let $(\cC, \EE, \fs)$ be an $n$-exangulated category and $\cA \subset \cC$ be an $n$-extension closed additive subcategory.
   Let $\sJ_\cA \colon \cA \to \cC$ denote the canonical inclusion. 
   \end{setup}
    In the situation of \Cref{stp:thesetup} one can define a functor $\FF$ on $\cA$ and an exact realisation $\ft$.
   We will use the following notation for the rest of this paper.
   \begin{definition}[{\cite[Proposition 4.2]{HLN22}}] \label{def:induced}
   We define $\FF(-,-) \coloneqq \EE(\sJ_{\cA} -, \sJ_\cA -)$ to be the restriction of $\EE$.
   For $A, C \in \cA$ we define $(\Theta_{\cA})_{C,A} \colon \FF(C,A) \to \EE(C,A),\, \delta \mapsto \delta$ as the canonical inclusion.
   This yields a natural isomorphism $\Theta_{\cA} \colon \FF(-,-) \to \EE(\sJ_\cA-, \sJ_\cA-)$.
   For $A, C \in \cA$ and $\delta \in \FF(C,A)$ we define $\ft(\delta) \coloneqq [X_\bullet]_{\cA}$ where $\langle X_\bullet, \delta \rangle$ is an $\fs$-distinguished $n$-exangle with $X_i \in \cA$ for $i = 0, \dots, n+1$. 
   \end{definition}
    Notice that $\ft$ is well-defined since for any pair $A,C \in \cA$ and $X_\bullet, Y_\bullet \in \C^{n+2}_{(\cA; A, C)}$ we have $[X_\bullet]_{\cA} = [Y_\bullet]_{\cA}$ if and only if $[X_\bullet]_{\cC} = [Y_\bullet]_{\cC}$, as homotopy equivalence are preserved and reflected under $\sJ_{\cA}$, since $\cA \subset \cC$ is additive.

   Recall also that $\ft$ is an exact realisation of $\FF$ and that $(\cA, \FF, \ft)$ satisfies axioms \ref{item:EA2}, \ref{item:EA2op} by \cite[Propsition 4.2(1)]{HLN22}.
   We have the following important remark which we will make extensive use of. 
   \begin{remark}\label{rem:theremark}
      An $\FF$-attached complex $\langle X_\bullet, \delta \rangle$ with $\delta \in \FF(X_{n+1}, X_0)$ is a $\ft$-distinguished $n$-exangle if and only if $\langle \sJ_\cA(X_\bullet), (\Theta_\cA)_{X_{n+1}, X_0} (\delta) \rangle = \langle X_\bullet, \delta \rangle$ is an $\fs$-distinguished $n$-exangle with $X_i \in \cA$ for $i = 0, \dots, n+1$.
      Indeed, if $\langle X_\bullet, \delta \rangle$ is $\ft$-distinguished then $X_0, \dots, X_{n+1} \in \cA$ and $[X_\bullet]_{\cA} = [Y_\bullet]_{\cA}$ for an $\fs$-distinguished $n$-exangle $\langle Y_\bullet, \delta \rangle$ with $Y_0, \dots, Y_{n+1} \in \cA$, by definition.
      However, then $[X_\bullet]_{\cC} = [Y_\bullet]_{\cC} = \fs(\delta)$, since $\sJ_{\cA}$ preserves homotopy equivalences and hence $\langle X_\bullet, \delta \rangle$ is $\fs$-distinguished.
      On the other hand, if $\langle X_\bullet, \delta \rangle$ is $\fs$-distinguished and $X_0, \dots, X_{n+1} \in \cA$, then $\langle X_\bullet, \delta \rangle$ is $\ft$-distinguished, since $\ft$ is well-defined.
   \end{remark}

   \section{The Obscure Axiom}
    Recall \Cref{stp:thesetup} and \Cref{def:induced}.
   Before we can start, we need an easy but crucial lemma, which is similar to \cite[Corollary 3.4]{HLN21}.

   \begin{lemma}\label{lem:trivialsummand}
       Suppose $n \in \NN_{\geq 2}$.
       If $\left[\begin{smallmatrix} 0 & f \end{smallmatrix}\right]^\top \colon X_0 \to A \oplus X_1'$ is a $\ft$-inflation with $A,X_1' \in \cA$ then $f \colon X_0 \to X_1'$ is a $\ft$-inflation. 
%      If there is a $\ft$-distinguished $n$-exangle $\langle X_\bullet, \delta \rangle$ with $X_1 = A \oplus X'_1$ and $d^X_0 = \left[\begin{smallmatrix} 0 & f \end{smallmatrix}\right]^\top$, then there is a $\ft$-distinguished $n$-exangle $\langle X'_\bullet, \delta' \rangle$ with $X'_0 = X_0$ and $d^{X'}_0 = f$.
   \end{lemma}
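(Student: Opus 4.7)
The plan is, starting from the given $\ft$-distinguished $n$-exangle $\langle X_\bullet, \delta \rangle$ with $d_0^X = [0, f]^\top \colon X_0 \to A \oplus X_1'$ and $X_i \in \cA$, to construct a $\ft$-distinguished $n$-exangle in $\cA$ whose first differential is $f$. Writing $\iota_1, \iota_2, \pi_1, \pi_2$ for the biproduct structure maps of $A \oplus X_1'$, the first step is to note that $\pi_1 \circ d_0^X = 0$, so the contravariant Hom-exact sequence attached to $\langle X_\bullet, \delta \rangle$ at $A$ provides $r \in \cA(X_2, A)$ with $r \circ d_1^X = \pi_1$; in particular $r \circ (d_1^X \iota_1) = \id_A$, so that $d_1^X \iota_1$ splits.

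Using $r$, define an $\FF$-attached complex $\langle Y_\bullet, \delta' \rangle$ in $\cA$ by $Y_0 = X_0$, $Y_1 = X_1'$, $Y_2 = X_2$, $Y_3 = X_3 \oplus A$, $Y_i = X_i$ for $i \geq 4$, with differentials $d_0^Y = f$, $d_1^Y = d_1^X \iota_2$, $d_2^Y = \begin{pmatrix} d_2^X \\ r \end{pmatrix}$, $d_3^Y = [d_3^X, 0]$ (when $n \geq 3$), and $d_i^Y = d_i^X$ for $i \geq 4$; the extension $\delta'$ is $\delta$ when $n \geq 3$ and $(\delta, 0) \in \FF(X_3 \oplus A, X_0)$ when $n = 2$. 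Every $Y_i$ lies in $\cA$ because $\cA$ is closed under biproducts, and $(d_0^Y)_\ast \delta' = 0 = (d_n^Y)^\ast \delta'$ follows from $\langle X_\bullet, \delta \rangle$ being an $n$-exangle together with $r d_1^X = \pi_1$. The key step is to show $\langle Y_\bullet, \delta' \rangle$ is $\fs$-distinguished, which we do by producing an isomorphism of $\FF$-attached complexes
$$\Sigma_\bullet \colon \langle X_\bullet, \delta \rangle \oplus \langle \triv_2(A), 0 \rangle \xrightarrow{\;\sim\;} \langle Y_\bullet, \delta' \rangle \oplus \langle \triv_1(A), 0 \rangle$$
in $\C^{n+2}_{(\cC; X_0, Y_{n+1})}$: take $\Sigma_i = \id$ for $i \notin \{1, 2\}$, $\Sigma_1$ the swap $A \oplus X_1' \to X_1' \oplus A$, and at degree $2$
$$\Sigma_2 = \begin{pmatrix} \id_{X_2} - d_1^X \iota_1 r & d_1^X \iota_1 \\ r & 0 \end{pmatrix} \colon X_2 \oplus A \to X_2 \oplus A,$$
which is an involution, hence invertible. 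The main obstacle is the matrix calculation showing that $\Sigma_\bullet$ commutes with all differentials, which reduces to $d_2^X d_1^X = 0$ and the identities from the first step.

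To conclude, both $\langle \triv_1(A), 0 \rangle$ and $\langle \triv_2(A), 0 \rangle$ are $\fs$-distinguished (by (R2) and contractibility of the trivial complexes), so the left-hand side of $\Sigma_\bullet$ is $\fs$-distinguished as a direct sum of two such. Transferring along $\Sigma_\bullet$, $\langle Y_\bullet \oplus \triv_1(A), \delta' \rangle$ is $\fs$-distinguished; since $\triv_1(A)$ is contractible, $Y_\bullet \oplus \triv_1(A)$ is homotopy equivalent to $Y_\bullet$ in $\C^{n+2}_{(\cC; X_0, Y_{n+1})}$, so $\langle Y_\bullet, \delta' \rangle$ itself is $\fs$-distinguished. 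Since every $Y_i$ lies in $\cA$, \Cref{rem:theremark} yields that $\langle Y_\bullet, \delta' \rangle$ is $\ft$-distinguished, and therefore $f = d_0^Y$ is a $\ft$-inflation. The hypothesis $n \geq 2$ is essential: one needs the degree-$2$ term in the complex to absorb the rogue $A$-summand via the shift from $\triv_1(A)$ (at degrees $1, 2$) to $\triv_2(A)$ (at degrees $2, 3$), which has no analogue for $n = 1$ without weak idempotent completeness.
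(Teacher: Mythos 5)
Your proof is correct and follows essentially the same strategy as the paper: add the distinguished summand $\langle \triv_2(A)_\bullet, 0\rangle$, use the weak-cokernel property to produce the retraction $r$ (the paper's $p'$), and split off a contractible $\triv_1(A)$-type summand to leave an $\fs$-distinguished $n$-exangle starting with $f$. Your explicit isomorphism $\Sigma_\bullet$ (with $\Sigma_2$ an involution) is exactly the combined isomorphism encoded in the paper's biproduct diagram, so the two arguments differ only in packaging.
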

   \begin{proof}
      Suppose $\langle X_\bullet, \delta \rangle$ is a $\ft$-distinguished $n$-exangle with $X_1 = A \oplus X_1'$ and $d^X_0 = \left[\begin{smallmatrix} 0 & f \end{smallmatrix}\right]^\top$.
      We construct a commutative diagram 
      \[\begin{tikzcd}[ampersand replacement=\&, column sep = 5em, row sep = large]
         {X_0} \& {A \oplus X_1'} \& {X_2}. \\
            \& A
            \arrow["{d^X_0 = \left[\begin{smallmatrix}0 \\ f\end{smallmatrix}\right]}", from=1-1, to=1-2]
            \arrow["{\left[\begin{smallmatrix}g & h\end{smallmatrix} \right] \coloneqq d^X_1}", from=1-2, to=1-3]
%           \arrow["{p \coloneqq \left[\begin{smallmatrix}\id_{A} & 0 \end{smallmatrix}\right]}"', shift right=1, from=1-2, to=2-2]
            \arrow["{p \coloneqq \left[\begin{smallmatrix}\id_{A} & 0 \end{smallmatrix}\right]}", from=1-2, to=2-2]
            \arrow["{p'}", bend left=15, dotted, from=1-3, to=2-2]
%           \arrow["{i \coloneqq \left[\begin{smallmatrix}\id_{A} \\ 0 \end{smallmatrix}\right]}"', shift right=1, from=2-2, to=1-2]
         \end{tikzcd}\]      
      Let $p \coloneqq \left[\begin{smallmatrix} \smash{\id_{A}} & 0 \end{smallmatrix}\right] \colon A \oplus X_1' \to A$.
      Then $p d^X_0 = 0$ and because $d^X_1$ is a weak cokernel of $d^X_0$ there is a $p' \colon X_2 \to A$ with $p' d^X_1 = p$.
      Denote $d^X_1 \colon A \oplus X_1' \to X_2$ by $\left[\begin{smallmatrix}g & h\end{smallmatrix}\right]$.
      Then $p' g = \id_A$ and $p' h = 0$.
      Hence, $p'$ is a retraction with section $g$ and $e \coloneqq gp'$ and $e' \coloneqq \id_{X_2} - g p'$ are orthogonal idempotents.
        The $n$-exangle $\langle \triv_2(A)_\bullet, 0 \rangle$ is $\fs$-distinguished using $n \in \NN_{\geq 2}$ and \cite[Proposition 2.14]{Hau21}.
        Notice that this crucially depends on $n \in \NN_{\geq 2}$ as for $n = 1$ there is not enough space to define $\triv_2(A)$, compare \Cref{def:trivialcomplex}.
      Hence, the $n$-exangle $\langle X''_\bullet, \delta' \rangle \coloneqq \langle X_\bullet \oplus \triv_2(A)_\bullet, \delta \oplus 0 \rangle$ 
      \[\begin{tikzcd}[ampersand replacement=\&, cramped]
        {X_0} \& {A \oplus X_1'} \& {X_2 \oplus A} \&[+1em] {X_3 \oplus A} \& {X_4} \& \cdots \& {X_{n+1}} \& {} 
        \arrow["{\left[\begin{smallmatrix}g & h \\ 0 & 0\end{smallmatrix}\right]}", from=1-2, to=1-3]
        \arrow["\delta'", dashed, from=1-7, to=1-8]
        \arrow["{\left[\begin{smallmatrix}d^X_3 & 0\end{smallmatrix}\right]}", from=1-4, to=1-5]
        \arrow["{\left[\begin{smallmatrix}0 \\ f\end{smallmatrix}\right]}", from=1-1, to=1-2]
        \arrow["{\left[\begin{smallmatrix}d^X_2 & 0 \\ 0 & \id_A \end{smallmatrix}\right]}", from=1-3, to=1-4]
        \arrow["{d^X_4}", from=1-5, to=1-6]
        \arrow["{d^X_n}", from=1-6, to=1-7]
      \end{tikzcd}\]
      is $\fs$-distinguished, by \Cref{rem:theremark} and \cite[Corollary 2.26(2)]{HLN21}.
      It is easy to check that \Cref{dgr:biproduct}
      \begin{figure}[H]\begin{tikzcd}[ampersand replacement=\&, row sep = large, cramped]
        {X_0} \& {X_1'} \& {X_2} \& {X_3 \oplus A} \& {X_4} \& \cdots \&[-.5em] {X_{n+1}} \& {} \\
        {X_0} \& {A \oplus X_1'} \& {X_2 \oplus A} \& {X_3 \oplus A} \& {X_4} \& \cdots \& {X_{n+1}} \& {} \\
        0 \& A \& A \& 0 \& 0 \& \cdots \& 0 \& {},
        \arrow["{\delta'}", dashed, from=1-7, to=1-8]
        \arrow["0", dashed, from=3-7, to=3-8]
        \arrow["{d^X_n}", from=1-6, to=1-7]
        \arrow[from=3-6, to=3-7]
        \arrow["{d^X_4}", from=1-5, to=1-6]
        \arrow[from=3-5, to=3-6]
        \arrow[Rightarrow, no head, from=1-5, to=2-5]
        \arrow[shift right=1, from=2-5, to=3-5]
        \arrow[shift right=1, from=3-5, to=2-5]
        \arrow["f", from=1-1, to=1-2]
        \arrow["h", from=1-2, to=1-3]
        \arrow["{\left[\begin{smallmatrix}d^X_2 e' \\ p'\end{smallmatrix}\right]}", from=1-3, to=1-4]
        \arrow[from=3-1, to=3-2]
        \arrow[Rightarrow, no head, from=3-2, to=3-3]
        \arrow[from=3-3, to=3-4]
        \arrow["{d^{X''}_3}", from=1-4, to=1-5]
        \arrow[shift left=1, from=3-4, to=3-5]
        \arrow[Rightarrow, no head, from=2-1, to=1-1]
        \arrow["{\left[\begin{smallmatrix}\id_A & 0\end{smallmatrix}\right]}"', shift right=1, from=2-2, to=3-2]
        \arrow["{\left[\begin{smallmatrix}\id_A \\ 0\end{smallmatrix}\right]}"', shift right=1, from=3-2, to=2-2]
        \arrow["{\left[\begin{smallmatrix}p' & 0\end{smallmatrix}\right] }"', shift right=1, from=2-3, to=3-3]
        \arrow["{\left[\begin{smallmatrix}g \\ 0\end{smallmatrix}\right]}"', shift right=1, from=3-3, to=2-3]
        \arrow[shift right=1, from=2-4, to=3-4]
        \arrow[shift right=1, from=3-4, to=2-4]
        \arrow[Rightarrow, no head, from=1-4, to=2-4]
        \arrow["{\left[\begin{smallmatrix}e' \\ p' \end{smallmatrix}\right]}", shift left=1, from=1-3, to=2-3]
        \arrow["{\left[\begin{smallmatrix}e' & g \end{smallmatrix}\right]}", shift left=1, from=2-3, to=1-3]
        \arrow["{\left[\begin{smallmatrix}0 & \id_{\smash{X_1'}}\end{smallmatrix}\right]}", shift left=1, from=2-2, to=1-2]
        \arrow["{\left[\begin{smallmatrix} 0 \\ \id_{\smash{X_1'}}\end{smallmatrix}\right]}", shift left=1, from=1-2, to=2-2]
        \arrow[shift right=1, from=2-1, to=3-1]
        \arrow[shift right=1, from=3-1, to=2-1]
        \arrow[Rightarrow, no head, from=1-7, to=2-7]
        \arrow[shift right=1, from=2-7, to=3-7]
        \arrow[shift right=1, from=3-7, to=2-7]
        \arrow["{d^{X''}_0}", from=2-1, to=2-2]
        \arrow["{d^{X''}_1}", from=2-2, to=2-3]
        \arrow["{d^{X''}_2}", from=2-3, to=2-4]
        \arrow["{d^{X''}_3}", from=2-4, to=2-5]
        \arrow["{d^{X''}_4}", from=2-5, to=2-6]
        \arrow["{d^{X''}_n}", from=2-6, to=2-7]
        \arrow["{\delta'}", from=2-7, to=2-8, dashed]
      \end{tikzcd}
      \caption{Biproduct diagram in $\text{\AE}_{(\cC, \EE)}^{n+2}$.}\label{dgr:biproduct}\end{figure}
      \noindent where the middle row is $\langle X''_\bullet, \delta' \rangle$, is a biproduct diagram in the additive category $\text{\AE}_{(\cC, \EE)}^{n+2}$, see \Cref{calc:biproduct}.
      By \cite[Proposition 3.3]{HLN21} for $(\cC, \EE, \fs)$ this means that the upper row of \Cref{dgr:biproduct} is an $\fs$-distinguished $n$-exangle $\langle X'_\bullet, \delta' \rangle$.
      All terms of $\langle X'_\bullet, \delta' \rangle$ are in $\cA$.
      This shows that $\langle X'_\bullet, \delta' \rangle$ is a $\ft$-distinguished $n$-exangle, by \Cref{rem:theremark}.
      Hence $f$ is a $\ft$-inflation.
   \end{proof}

    The proof of \Cref{lem:trivialsummand} depends on $n \in \NN_{\geq 2}$.
   However, \Cref{lem:trivialsummand} still holds for $n = 1$ if $\cA$ is weakly idempotent complete.
   Indeed, we can then just remove a trivial summand $\langle \triv_1(A)_\bullet, 0 \rangle$ from $\langle X_\bullet, \delta \rangle$. 
   For the case where $\cA = \cC$ this has been shown by Tattar, see \cite[Lemma II.1.43]{Tat22}.
   We provide a proof for convenience of the reader.
   \begin{lemma}\label{lem:weakidempotent}
      Suppose $\cA$ is weakly idempotent complete and $n=1$.
      If $\left[\begin{smallmatrix} 0 & f \end{smallmatrix}\right]^\top \colon X_0 \to A \oplus X_1'$ is a $\ft$-inflation with $A,X_1' \in \cA$ then $f \colon X_0 \to X_1'$ is a $\ft$-inflation. 
%      If there is a $\ft$-distinguished $1$-exangle $\langle X_\bullet, \delta \rangle$ with $X_1 = A \oplus X'_1$ and $d^X_0 = \left[\begin{smallmatrix} 0 & f \end{smallmatrix}\right]^\top$,
%      then there is a $\ft$-distinguished $1$-exangle $\langle X'_\bullet, \delta' \rangle$ with $X'_0 = X_0$ and $d^{X'}_0 = f$.
   \end{lemma}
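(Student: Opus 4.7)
The plan is to mimic the proof of \Cref{lem:trivialsummand}, but to compensate for the fact that we no longer have room to add an auxiliary $\triv_2(A)_\bullet$ summand by instead \emph{extracting} a $\triv_1(A)_\bullet$ summand already present in $\langle X_\bullet, \delta \rangle$. This is where weak idempotent completeness will enter.

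Fix a $\ft$-distinguished $1$-exangle $\langle X_\bullet, \delta \rangle$ with $X_1 = A \oplus X_1'$ and $d^X_0 = [0\ f]^\top$. Exactly as in the proof of \Cref{lem:trivialsummand}, the composite $[\id_A\ 0]\circ d^X_0 = 0$ and the weak cokernel property of $d^X_1$ produce a morphism $p' \colon X_2 \to A$ with $p' d^X_1 = [\id_A\ 0]$. Writing $d^X_1 = [g\ h]$ with $g\colon A\to X_2$ and $h\colon X_1'\to X_2$, we obtain $p' g = \id_A$ and $p' h = 0$, so $g$ is a section (equivalently $p'$ a retraction) in $\cA$.

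Here is the point where the argument departs from \Cref{lem:trivialsummand}: since $\cA$ is weakly idempotent complete, the section $g$ admits a cokernel, giving a splitting $X_2 \cong A \oplus X_2'$ in $\cA$ under which $g$ becomes $[\id_A\ 0]^\top$ and $p'$ becomes $[\id_A\ 0]$. Since $p'h=0$, the morphism $h$ has the form $[0\ h']^\top$ for some $h'\colon X_1'\to X_2'$, so $d^X_1$ becomes block-diagonal, $d^X_1 \cong \mathrm{diag}(\id_A,h')$. The compatibility $(d^X_1)^\ast \delta = 0$ combined with biadditivity of $\EE$ forces $\delta$ to be pulled back from some $\beta \in \EE(X_2',X_0)$ along the projection $A\oplus X_2'\to X_2'$. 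Consequently, in the additive category $\text{\AE}^{3}_{(\cC,\EE)}$ we have an isomorphism
\[
\langle X_\bullet, \delta\rangle \;\cong\; \langle \triv_1(A)_\bullet, 0\rangle \,\oplus\, \langle Y_\bullet, \beta\rangle,
\]
where $Y_\bullet$ is the complex $X_0 \xrightarrow{f} X_1' \xrightarrow{h'} X_2'$. All terms of $Y_\bullet$ lie in $\cA$ and $\langle \triv_1(A)_\bullet, 0\rangle$ is $\fs$-distinguished by axiom (R2).

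It remains to conclude by the same invocation of \cite[Proposition 3.3]{HLN21} used in \Cref{lem:trivialsummand}: since the biproduct is $\fs$-distinguished and the trivial summand is $\fs$-distinguished, the summand $\langle Y_\bullet, \beta\rangle$ is $\fs$-distinguished as well. By \Cref{rem:theremark} it is $\ft$-distinguished, so $d^Y_0 = f$ is a $\ft$-inflation, as required. The main obstacle is the splitting step, which is precisely the content of weak idempotent completeness; all other manipulations are purely formal and parallel those in the proof of \Cref{lem:trivialsummand}.
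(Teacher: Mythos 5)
Your proposal is correct and follows essentially the same route as the paper: both split the complementary idempotent $\id_{X_2}-gp'$ (the paper writes it as $\id_{X_2}-d_1^Xip'$, but $d_1^Xi=g$) using weak idempotent completeness to get $X_2\cong A\oplus X_2'$, decompose $\langle X_\bullet,\delta\rangle$ as $\langle\triv_1(A)_\bullet,0\rangle\oplus\langle Y_\bullet,(j')^\ast\delta\rangle$ with $Y_\bullet\colon X_0\xrightarrow{f}X_1'\xrightarrow{q'd_1^Xj}X_2'$, and conclude via \cite[Proposition 3.3]{HLN21} and \Cref{rem:theremark}. The only difference is presentational: you transport along the isomorphism and observe block-diagonality, whereas the paper writes out the biproduct diagram in $\text{\AE}^{3}_{(\cC,\EE)}$ explicitly.
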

   \begin{proof}
      As $\left[\begin{smallmatrix} 0 & f \end{smallmatrix}\right]^\top \colon X_0 \to A \oplus X_1'$ is a $\ft$-inflation there is a $\ft$-distinguished $1$-exangle $\langle X_\bullet, \delta \rangle$ with $X_1 = A \oplus X'_1$ and $d^X_0 = \left[\begin{smallmatrix} 0 & f \end{smallmatrix}\right]^\top$.
      We construct the following diagram
      \[\begin{tikzcd}[ampersand replacement=\&, column sep = 7em, row sep = large]
        {X_0} \& {A \oplus X_1'} \& {X_2} \\
        \& A \&
        \arrow["{d^X_1}", from=1-2, to=1-3]
        \arrow["{d^X_0 = \left[\begin{smallmatrix}0 \\ f\end{smallmatrix}\right]}", from=1-1, to=1-2]
        \arrow["{i \coloneqq \left[\begin{smallmatrix}\id_A \\ 0\end{smallmatrix}\right]}", shift left=1, from=2-2, to=1-2]
        \arrow["{p \coloneqq \left[\begin{smallmatrix}\id_{A} & 0\end{smallmatrix}\right]}", shift left=1, from=1-2, to=2-2]
        \arrow["{p'}", shift right=1, bend left=15, dotted, from=1-3, to=2-2]
      \end{tikzcd}\]
      in $\cA$.
      Let $p \coloneqq \left[\begin{smallmatrix} \smash{\id_{A}} & 0 \end{smallmatrix}\right] \colon A \oplus X_1' \to A$.
      Then $p d^X_0 = 0$ and because $d^X_1$ is a weak cokernel of $d^X_0$ there is $p' \colon X_2 \to A$ with $p' d^X_1 = p$.
      Now, $i \coloneqq \left[\begin{smallmatrix} \smash{\id_{A}} & 0 \end{smallmatrix}\right]^\top \colon A \to A \oplus X_1'$ is a section for $p$. 
      Hence, $d^X_1 i$ is a section for the retraction $p'$ and $e \coloneqq d_1^X i p' \in \End_\cC(X_2)$ is a split idempotent in $\cA$.
      As $\cA$ is weakly idempotent complete there is a splitting of $e' \coloneqq \id_{X_2} - e$, say with retraction $q' \colon X_2 \to X_2'$ and section $j' \colon X_2' \to X_2$ such that $e' = j'q'$ and $X_2' \in \cA$.
      Put $\delta' \coloneqq (j')^\ast \delta$, $q \coloneqq \left[\begin{smallmatrix} 0 & \id_{\smash{X_1'}} \end{smallmatrix}\right]$ and $j \coloneqq \left[\begin{smallmatrix} 0 & \id_{\smash{X_1'}} \end{smallmatrix}\right]^\top$. 
      It is easy to check that \Cref{dgr:biproduct2}
      \begin{figure}[H]
        \begin{tikzcd}
            X_0 \arrow[d, equal] \arrow[r, "f"]                                                                & X_1' \arrow[d, "j"', shift right] \arrow[r, "q'd^X_1j"]               & X_2'   \arrow[d, "j'"', shift right]  \arrow[r, "\delta'", dashed]               & {} \\
            X_0 \arrow[d, shift left] \arrow[r, "{\left[\begin{smallmatrix} 0 \\ f \end{smallmatrix}\right]}"] & A \oplus X_1'               \arrow[d, "p", shift left] \arrow[u, "q"', shift right] \arrow[r, "d^X_1"] & X_2 \arrow[d, "p'", shift left] \arrow[u, "q'"', shift right] \arrow[r, "\delta", dashed]           & {} \\
            0 \arrow[r] \ar[u, shift left]                                                                     & A \arrow[u, "i", shift left] \arrow[r, equal]                                          & A \arrow[u, "d_1^Xi", shift left]                                   \arrow[r, "{}_0 0_A", dashed]                      & {},
        \end{tikzcd}
      \caption{Biproduct diagram in $\text{\AE}_{(\cC, \EE)}^{3}$.}\label{dgr:biproduct2}
      \end{figure}
      \noindent where the middle row is the $1$-exangle $\langle X_\bullet, \delta \rangle$, is a biproduct diagram in the additive category $\smash{\text{\AE}_{(\cC, \EE)}^{3}}$, see \Cref{calc:biproduct2}.
      By \cite[Proposition 3.3]{HLN21} for $(\cC, \EE, \fs)$ this means that the upper row of \Cref{dgr:biproduct2} is an $\fs$-distinguished $n$-exangle $\langle X'_\bullet, \delta' \rangle$.
      All terms of $X'_\bullet$ are in $\cA$, so $\langle X', \delta' \rangle$ is $\ft$-distinguished, by \Cref{rem:theremark}.
      Hence, $f$ is a $\ft$-inflation.
   \end{proof}

   \begin{lemma}\label{lem:reversemappingcone}
      If $g \colon X_0 \to X_1$ is a $\ft$-inflation and $f \colon X_0 \to A$ is a morphism with $A \in \cA$ then $\left[\begin{smallmatrix} f & g \end{smallmatrix}\right]^\top \colon X_0 \to A \oplus X_1$ is a $\ft$-inflation. 
      %For any $\ft$-distinguished $n$-exangle $\langle X_\bullet, \delta \rangle$ and any morphism $f \colon X_0 \to A$ to an object $A \in \cA$ there is a $\ft$-distinguished $n$-exangle $\langle X'_\bullet, \delta' \rangle$ with $X'_0 = X_0$, $X'_1 = A \oplus X_1$ and $d_0^{X'} = \left[\begin{smallmatrix} f & d_0^X \end{smallmatrix}\right]^\top$.
   \end{lemma}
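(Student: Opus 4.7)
The plan is to realise $\left[\begin{smallmatrix} f & g \end{smallmatrix}\right]^\top$ as the first differential of a mapping cocone produced by axiom \ref{item:EA2op} in $(\cA, \FF, \ft)$, after pushing the extension that realises $g$ forward along $f$. This is the natural $n$-exangulated analogue of the standard construction that turns a short exact sequence and a morphism out of the kernel into a new short exact sequence whose inflation is the pair.

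First I fix a $\ft$-distinguished $n$-exangle $\langle X_\bullet, \delta \rangle$ with $d^X_0 = g$; by definition of $\ft$, all $X_i$ lie in $\cA$ and $\delta \in \FF(X_{n+1}, X_0)$. The pushforward $f_\ast \delta$ lies in $\FF(X_{n+1}, A)$, which is an $\FF$-extension between two objects of $\cA$. By $n$-extension closedness of $\cA$ (\Cref{def:exangulatedextclosed}) it is realised by some $\ft$-distinguished $n$-exangle $\langle Y_\bullet, f_\ast \delta \rangle$ with $Y_0 = A$ and $Y_{n+1} = X_{n+1}$. Axiom \ref{item:EA2op} holds for $(\cA, \FF, \ft)$ by \cite[Proposition 4.2(1)]{HLN22}, so applying it to the morphism $f \colon X_0 \to A$ produces a good lift $h_\bullet \colon X_\bullet \to Y_\bullet$ of $(f, \id_{X_{n+1}})$ such that $\langle N^h_\bullet, (d^Y_n)^\ast \delta \rangle$ is $\ft$-distinguished. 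By construction the first differential of $N^h_\bullet$ is
\[
\begin{bmatrix} d^X_0 \\ h_0 \end{bmatrix} = \begin{bmatrix} g \\ f \end{bmatrix} \colon X_0 \to X_1 \oplus A,
\]
so this morphism is a $\ft$-inflation.

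Finally, composing the first differential with the swap isomorphism $\left[\begin{smallmatrix} 0 & \id_A \\ \id_{X_1} & 0 \end{smallmatrix}\right] \colon X_1 \oplus A \to A \oplus X_1$ yields an isomorphic $\ft$-distinguished $n$-exangle whose first differential is $\left[\begin{smallmatrix} f & g \end{smallmatrix}\right]^\top \colon X_0 \to A \oplus X_1$, proving the claim. I expect no serious obstacle: the whole argument hinges on the observation that $n$-extension closedness lets us choose $\langle Y_\bullet, f_\ast \delta \rangle$ already inside $\cA$, after which \ref{item:EA2op} for the induced structure immediately delivers the required inflation via its mapping cocone, with the swap being merely a relabelling of direct summands.
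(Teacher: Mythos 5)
Your proposal is correct and follows essentially the same route as the paper: complete $g$ to a $\ft$-distinguished $n$-exangle, realise the pushforward $f_\ast\delta$ inside $\cA$ using $n$-extension closedness, apply axiom (EA2$^{\op}$) for $(\cA,\FF,\ft)$ to obtain a good lift whose mapping cocone has first differential $\left[\begin{smallmatrix} g \\ f \end{smallmatrix}\right]$, and finish with the swap isomorphism. The only cosmetic difference is that the paper explicitly invokes \cite[Corollary 2.26(2)]{HLN21} to justify that the swapped complex is again $\ft$-distinguished.
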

   \begin{proof}
%      Let $g_0 \coloneqq \left[ \begin{smallmatrix} f & d_0^X \end{smallmatrix} \right]^\top \colon X_0 \to  A \oplus X_1$.
      We can complete the $\ft$-inflation $g$ to a $\ft$-distinguished $n$-exangle $\langle X_\bullet, \delta \rangle$ with $d^X_0 = g$.
      As $A, X_{n+1} \in \cA$ there is a $\ft$-distinguished $n$-exangle $\langle Y_\bullet, f_\ast \delta \rangle$ with $Y_0 = A$ and $Y_{n+1} = X_{n+1}$.
      The solid morphisms $f$ and $\id_{X_{n+1}}$ in the diagram
      \[\begin{tikzcd}[ampersand replacement = \&]
         X_0 \ar[r, "g"] \ar[d, "{f}"]  \& X_1 \ar[r] \ar[d, dotted] \& \cdots \ar[r] \& X_n \ar[r] \ar[d, dotted] \& X_{n+1} \ar[d, equal] \ar[r, "\delta", dashed] \& {} \\ 
         A \ar[r]                \& Y_1 \ar[r]                \& \cdots \ar[r] \& Y_n \ar[r]                \& X_{n+1} \ar[r, dashed, "f_\ast \delta"]    \& {}
      \end{tikzcd}\]
      form a morphism of $\FF$-extensions $(f, \id_{X_{n+1}}) \colon  \delta \to f_\ast \delta$.
      Since $(\cA, \FF, \ft)$ satisfies axiom \ref{item:EA2op} we can find a good lift $f_\bullet \colon \langle X_\bullet, \delta \rangle \to \langle Y_\bullet, f_\ast \delta \rangle $ of $(f, \id_{X_{n+1}}) \colon \delta \to f_\ast \delta$ such that the mapping cocone
      \[\langle N^{f}_\bullet, (d^Y_n)^\ast \delta \rangle \colon \hspace{.5cm} 
      \begin{tikzcd}[ampersand replacement = \&, cramped]
         X_0 \ar[r, "{\left[\begin{smallmatrix} g \\ f \end{smallmatrix}\right]}"] \& X_1 \oplus A \ar[r, "d^{N^{f}}_1"] \&[-.5em] X_2 \oplus Y_1 \ar[r] \&[-1.5em] \cdots \ar[r] \&[-1.5em] X_n \oplus Y_{n-1} \ar[r] \&[-1.5em] Y_{n} \ar[r, dashed, "(d^Y_n)^\ast \delta"] \&[+.5em] {}
      \end{tikzcd}\]
      of $f_\bullet$ is $\ft$-distinguished.
      Now there is an isomorphism 
      \[s \coloneqq \left[\begin{smallmatrix} 0 & \id_{A} \\ \id_{X_1} & 0  \end{smallmatrix}\right] \colon X_1 \oplus A \to A \oplus X_1 \]
      so $\langle N^{f}_\bullet, (d^Y_n)^\ast\delta \rangle$ is isomorphic to
      \[\langle N_\bullet, \delta \rangle \colon \hspace{1cm} 
      \begin{tikzcd}[ampersand replacement = \&, cramped]
          X_0 \ar[r, "{\left[\begin{smallmatrix} f \\ g \end{smallmatrix}\right]}"] \& A \oplus X_1 \ar[r, "d^{N^{f}}_1 s^{-1}"] \&[+.5em] X_2 \oplus Y_1 \ar[r] \&[-1.5em] \cdots \ar[r] \&[-1.5em] X_n \oplus Y_{n-1} \ar[r] \&[-1.5em] Y_{n} \ar[r, dashed, "(d^Y_n)^\ast \delta"] \&[+.5em] {}.
      \end{tikzcd}\]
      By \cite[Corollary 2.26(2)]{HLN21} this is a $\ft$-distinguished $n$-exangle. 
      The result follows.
      \end{proof}
      
      \begin{proposition}[Relative Obscure Axiom]\label{prop:obscure}
         Suppose $\cA$ is weakly idempotent complete or $n \in \NN_{\geq 2}$.
         Let $f \colon X \to Y$ and $g \colon Y \to Z$ be two morphisms with $Y \in \cA$.
         If $gf \colon X \to Z$ is a $\ft$-inflation, then so is $f$.
      \end{proposition}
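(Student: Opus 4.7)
The plan is to reduce the claim to the two preceding lemmas via a change-of-basis trick on the target of $f$. First I note that since $gf$ is a $\ft$-inflation it arises as the initial morphism of a $\ft$-distinguished $n$-exangle whose terms all lie in $\cA$; in particular $X, Z \in \cA$, and combined with the hypothesis $Y \in \cA$ every object in sight lives in $\cA$.

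The key step is to apply \Cref{lem:reversemappingcone} to the $\ft$-inflation $gf \colon X \to Z$ together with the morphism $f \colon X \to Y$ (with $Y \in \cA$), which produces that $\left[\begin{smallmatrix} f \\ gf \end{smallmatrix}\right] \colon X \to Y \oplus Z$ is a $\ft$-inflation. I then perform a change of basis on the target: the matrix $\phi \coloneqq \left[\begin{smallmatrix} \id_Y & 0 \\ -g & \id_Z \end{smallmatrix}\right] \colon Y \oplus Z \to Y \oplus Z$ is an automorphism (with inverse $\left[\begin{smallmatrix} \id_Y & 0 \\ g & \id_Z \end{smallmatrix}\right]$) satisfying $\phi \circ \left[\begin{smallmatrix} f \\ gf \end{smallmatrix}\right] = \left[\begin{smallmatrix} f \\ 0 \end{smallmatrix}\right]$. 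By isomorphism-invariance of $\ft$-distinguished $n$-exangles (as already used at the end of the proof of \Cref{lem:reversemappingcone} via \cite[Corollary 2.26(2)]{HLN21}) this implies that $\left[\begin{smallmatrix} f \\ 0 \end{smallmatrix}\right] \colon X \to Y \oplus Z$ is a $\ft$-inflation. Composing further with the coordinate swap $Y \oplus Z \to Z \oplus Y$, I obtain that $\left[\begin{smallmatrix} 0 \\ f \end{smallmatrix}\right] \colon X \to Z \oplus Y$ is a $\ft$-inflation.

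Finally I invoke the trivial-summand lemma: \Cref{lem:trivialsummand} in case $n \in \NN_{\geq 2}$, or \Cref{lem:weakidempotent} in case $n = 1$ and $\cA$ is weakly idempotent complete. With $A = Z \in \cA$ and $X_1' = Y \in \cA$, the hypothesis that $\left[\begin{smallmatrix} 0 \\ f \end{smallmatrix}\right] \colon X \to Z \oplus Y$ is a $\ft$-inflation gives precisely the conclusion that $f \colon X \to Y$ is a $\ft$-inflation, completing the argument.

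The main conceptual point is spotting the change-of-basis step, which turns the composite $gf$ in the second coordinate into a zero entry and thereby sets up the trivial-summand lemma. I do not anticipate any serious technical obstacle beyond clean bookkeeping of isomorphisms of $\ft$-distinguished $n$-exangles, which is already implicit in the proofs of the preceding two lemmas.
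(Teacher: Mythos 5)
Your proposal is correct and follows essentially the same route as the paper: apply \Cref{lem:reversemappingcone} to get the inflation $\left[\begin{smallmatrix} f & gf \end{smallmatrix}\right]^{\top}$, use the shear-plus-swap isomorphism $Y \oplus Z \to Z \oplus Y$ (which the paper packages as a single isomorphism $s$) together with \cite[Corollary 2.26(2)]{HLN21} to obtain the inflation $\left[\begin{smallmatrix} 0 & f \end{smallmatrix}\right]^{\top}$, and finish with \Cref{lem:trivialsummand} or \Cref{lem:weakidempotent}. No gaps.
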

      \begin{proof}
%         Notice that any $n$-exangulated category is $n$-extension closed in itself, so we can apply our lemmas with $\cA = \cC$.
%         There is a $\ft$-distinguished $n$-exangle $\langle X_\bullet, \delta \rangle$ with $X_0 = X$, $X_1 = Z$ and $d^X_0 = gf$.
         We have $Y \in \cA$.
            Therefore, $\left[\begin{smallmatrix} f & gf \end{smallmatrix}\right]^{\top} \colon X \to Y \oplus Z$ is a $\ft$-inflation by applying \Cref{lem:reversemappingcone} to the $\ft$-inflation $gf \colon X \to Z$ and the morphism $f \colon X \to Y$.
         Hence, there is a $\ft$-distinguished $n$-exangle $\langle X_\bullet, \delta \rangle$ with $X_0 = X$, $X_1 = Y \oplus Z$ and $d^{X}_0 = \smash{\left[\begin{smallmatrix} f & gf \end{smallmatrix}\right]^\top}$.
         Consider the isomorphism 
         \[s \coloneqq \left[\begin{smallmatrix} 0 & \id_Z \\ \id_Y & 0 \end{smallmatrix}\right] \left[\begin{smallmatrix} \id_Y & 0 \\ -g & \id_Z\end{smallmatrix}\right] \colon Y\oplus Z \to Z \oplus Y.\]
         This isomorphism satisfies
         \[s \left[\begin{smallmatrix} f \\ gf \end{smallmatrix}\right] = \left[\begin{smallmatrix} 0 & \id_Z \\ \id_Y & 0 \end{smallmatrix}\right] \left[\begin{smallmatrix} \id_Y & 0 \\ -g & \id_Z\end{smallmatrix}\right] \left[\begin{smallmatrix} f \\ gf \end{smallmatrix}\right] = \left[\begin{smallmatrix} 0 & \id_Z \\ \id_Y & 0 \end{smallmatrix}\right] \left[\begin{smallmatrix} f \\ 0 \end{smallmatrix}\right] =  \left[\begin{smallmatrix} 0 \\ f \end{smallmatrix}\right] \]
         Using $s$ and \cite[Corollary 2.26(2)]{HLN21}, the $\ft$-distinguished $n$-exangle $\langle X_\bullet, \delta \rangle$ gives rise to an $\ft$-distinguished $n$-exangle
         \[\begin{tikzcd}[ampersand replacement=\&]
             X \&[-.5em] {Z \oplus Y} \&[+1em] {X_2} \&[-1em] \cdots \&[-1em] {X_n} \&[-1em] {X_{n+1}} \& {}.
             \arrow["{\left[\begin{smallmatrix} 0 \\ f \end{smallmatrix}\right]}", from=1-1, to=1-2]
             \arrow["{d^{X}_1 s^{-1}}", from=1-2, to=1-3]
            \arrow[from=1-3, to=1-4]
            \arrow[from=1-4, to=1-5]
            \arrow[from=1-5, to=1-6]
            \arrow["{\delta}", dashed, from=1-6, to=1-7]
         \end{tikzcd}\]
         Hence, $\left[\begin{smallmatrix} 0 & f\end{smallmatrix}\right]^{\top}$ is a $\ft$-inflation.
         Notice, $Y,Z \in \cA$.
         Hence, $f$ is a $\ft$-inflation by \Cref{lem:weakidempotent} if $\cA$ is weakly idempotent complete and $n=1$ and by \Cref{lem:trivialsummand} if $n \in \NN_{\geq 2}$.
      \end{proof}

       It is remarkable that, for $n \in \NN_{\geq 2}$, we do not need to assume that $\cC$ is weakly idempotent complete for the following to hold.

      \begin{corollary}[Strong Obscure Axiom]\label{cor:obscure}
         Suppose $\cC$ is weakly idempotent complete or $n \in \NN_{\geq 2}$.
         Let $f \colon X \to Y$ and $g \colon Y \to Z$ are two morphisms. 
         If $gf \colon X \to Z$ is an $\fs$-inflation, then so is $f$.
      \end{corollary}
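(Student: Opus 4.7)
The plan is to deduce this directly from the Relative Obscure Axiom (Proposition \ref{prop:obscure}) by choosing the ambient category itself as the $n$-extension closed subcategory. Specifically, I would take $\cA = \cC$ in \Cref{stp:thesetup}. Clearly $\cC$ is a full additive subcategory of itself which is closed under isomorphisms and finite direct sums, and the $n$-extension closed property is trivially satisfied because every $\fs$-distinguished $n$-exangle has all its terms in $\cC$ by definition. Thus $\cA = \cC$ fits into the global setup.

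Next I would identify $(\cA, \FF, \ft)$ with $(\cC, \EE, \fs)$. With $\cA = \cC$, the inclusion $\sJ_{\cA}$ is just $\Id_{\cC}$, so the restriction $\FF(-,-) = \EE(\sJ_{\cA}-, \sJ_{\cA}-)$ equals $\EE(-,-)$, and $\Theta_{\cA}$ is the identity natural transformation. Moreover, by \Cref{def:induced} and \Cref{rem:theremark}, a complex $X_\bullet \in \C^{n+2}_{(\cC; A, C)}$ is a $\ft$-realisation of $\delta$ if and only if it is an $\fs$-realisation of $\delta$, so $\ft$-inflations agree with $\fs$-inflations.

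Finally I would apply \Cref{prop:obscure}. Under either hypothesis ``$\cC$ is weakly idempotent complete'' or ``$n \in \NN_{\geq 2}$'', the corresponding hypothesis of \Cref{prop:obscure} holds for $\cA = \cC$. The assumption of \Cref{prop:obscure} that $Y \in \cA$ is automatic since $\cA = \cC$. Hence if $gf$ is an $\fs$-inflation (equivalently a $\ft$-inflation), then $f$ is a $\ft$-inflation (equivalently an $\fs$-inflation), proving the corollary.

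There is really no obstacle here; the entire proof is an unfolding of definitions once \Cref{prop:obscure} is in hand. The only minor point worth spelling out is the identification of $\ft$ and $\fs$ when $\cA = \cC$, which was already recorded in \Cref{def:induced} and \Cref{rem:theremark}; all substantive work has been done at the relative level, where the assumption $Y \in \cA$ played the role of ensuring that intermediate constructions (such as the mapping cocone and the object $A$ split off via \Cref{lem:trivialsummand} or \Cref{lem:weakidempotent}) remained inside $\cA$.
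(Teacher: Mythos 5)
Your proposal is correct and is exactly the paper's argument: the paper proves the corollary by the single line ``this follows immediately from \Cref{prop:obscure}'', which is precisely your specialisation $\cA = \cC$, under which $\ft$-inflations coincide with $\fs$-inflations and the hypothesis $Y \in \cA$ is vacuous. Your write-up simply makes the implicit identification explicit.
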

      \begin{proof}
         This follows immediately from \Cref{prop:obscure}.
      \end{proof}
    
       Indeed, the converse of \Cref{cor:obscure} is true as well. We recall the following definition.

      \begin{definition}[{\cite[Condition 5.8]{NP19}}]\label{def:WIC}
         An extriangulated category $(\cC, \EE, \fs)$ satisfies \emph{condition} (WIC) if for any two morphisms $f \colon X \to Y$ and $g \colon Y \to Z$ the following hold.
         \begin{enumerate}
            \item If $gf$ is a $\fs$-inflation, then $f$ is a $\fs$-inflation.
            \item If $gf$ is a $\fs$-deflation, then $g$ is a $\fs$-deflation.
         \end{enumerate}
      \end{definition}

      \begin{proposition} \label{prop:extri}
         An extriangulated category satisfies condition \emph{(WIC)} if and only if it is weakly idempotent complete.
      \end{proposition}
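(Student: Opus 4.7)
The plan is to split the biconditional into two independent implications. The direction $(\Leftarrow)$ is essentially free: invoking the Strong Obscure Axiom \Cref{cor:obscure} with $n = 1$ and $\cC$ weakly idempotent complete gives (WIC)(1), while the dual statement — obtained by applying \Cref{cor:obscure} to $(\cC, \EE, \fs)^{\op}$, whose underlying category is still weakly idempotent complete since this property is self-dual — yields (WIC)(2).

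For the converse $(\Rightarrow)$, I will establish weak idempotent completeness by producing a kernel for every retraction. Let $r \colon Y \to Z$ be a retraction with chosen section $s \colon Z \to Y$. The first step is to observe that $\id_Z$ is an $\fs$-deflation, since it appears as $d^{\triv_1(Z)}_1$ in the $\fs$-distinguished $1$-exangle $\langle \triv_1(Z)_\bullet, {}_0 0_Z \rangle$ guaranteed by axiom (R2). Applying (WIC)(2) to the factorisation $\id_Z = rs$ then forces $r$ to be an $\fs$-deflation, so it fits into an $\fs$-distinguished $1$-exangle
\[
\langle \, A \xrightarrow{a} Y \xrightarrow{r} Z, \,\delta \, \rangle
\]
for some $\delta \in \EE(Z, A)$.

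The second step will be to show $\delta = 0$. For this I will use the defining exact sequence of a $1$-exangle evaluated at $X = Z$, namely
\[
\cC(Z, Y) \xrightarrow{\,r_*\,} \cC(Z, Z) \xrightarrow{\,\delta_\sharp\,} \EE(Z, A),
\]
in which $r_*(s) = \id_Z$ lies in the kernel of $\delta_\sharp$, whence $\delta = \delta_\sharp(\id_Z) = 0$. The final step is to deduce that an $\fs$-distinguished $1$-exangle whose extension vanishes must be the split one, i.e.\ $Y \cong A \oplus Z$ with $a$ identified with the summand inclusion and $r$ with the projection. This identifies $a$ as a categorical kernel of $r$, and weak idempotent completeness follows.

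The step I expect to be the main technical obstacle is the last one — formally deducing that the zero extension is realised by the split $1$-exangle $A \to A \oplus Z \to Z$. This should fall out of the biadditivity of $\fs$ applied to the decomposition ${}_A 0_Z = {}_A 0_0 \oplus {}_0 0_Z$ together with the two special cases of axiom (R2), but requires one to invoke the compatibility of $\fs$ with direct sums as in \cite[Proposition 3.3]{HLN21}; once this is in hand, the identification of $Y$ with $A \oplus Z$ and of $a$ with the summand inclusion follows from \cite[Lemma 4.1]{HLN21}.
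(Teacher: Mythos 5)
Your proof is correct, and one half of it diverges from the paper in an interesting way. The direction ``weakly idempotent complete $\Rightarrow$ (WIC)'' is exactly the paper's argument: \Cref{cor:obscure} with $n=1$ plus its dual. For the converse, however, the paper does not argue at all --- it simply cites \cite[Proposition 3.33]{Msa22} and \cite[Corollary II.1.41]{Tat22} --- whereas you give a self-contained proof. Your argument is sound: $\id_Z = rs$ is an $\fs$-deflation by (R2) via $\triv_1(Z)_\bullet$, so (WIC)(2) makes the retraction $r$ an $\fs$-deflation; the exact sequence $\cC(Z,Y)\xrightarrow{r_\ast}\cC(Z,Z)\xrightarrow{\delta_\sharp}\EE(Z,A)$ kills $\delta$ because $\id_Z = r_\ast(s)$ lies in $\ker\delta_\sharp$; and the zero extension is realised by $A\to A\oplus Z\to Z$ (direct sum of the two trivial $1$-exangles from (R2) via \cite[Proposition 3.3]{HLN21}), so \cite[Lemma 4.1]{HLN21} identifies $a$ with the split inclusion, which is a kernel of $r$. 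Two small points worth making explicit in a final write-up: first, weak idempotent completeness is usually phrased as ``every retraction has a kernel \emph{and} every section has a cokernel,'' so you should either invoke the standard equivalence of these two conditions (\cite[Remark 7.4]{Bueh10}) or note that the dual argument using (WIC)(1) handles sections; second, the isomorphism $Y\cong A\oplus Z$ must be compatible with $a$ and $r$, which it is because the homotopy equivalence fixes the end terms --- but that is exactly what \cite[Lemma 4.1]{HLN21} provides for $n=1$. What your route buys is independence from the external references; what the paper's route buys is brevity.
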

      \begin{proof}
         That a weakly idempotent complete extriangulated category satisfies condition (WIC) follows from \Cref{cor:obscure} and its dual using \Cref{rem:extri1}.
         That an extriangulated category which satisfies condition (WIC) is weakly idempotent complete follows from \cite[Proposition 3.33]{Msa22} or \cite[Corollary II.1.41]{Tat22}.
      \end{proof}

      \section{\texorpdfstring{$n$}{n}-extension closed subcategories of \texorpdfstring{$n$}{n}-exangulated categories}
       Recall \Cref{stp:thesetup} and \Cref{def:induced}.
      By \cite[Proposition 4.2(1)]{HLN22}, we know that $\ft$ is an exact realisation for $\FF$ and that $(\cA, \FF, \ft)$ satisfies axioms \ref{item:EA2} and \ref{item:EA2op}.
      To show that $(\cA, \FF, \ft)$ is $n$-exangulated we only need to show that $(\cA, \FF, \ft)$ satisfies axioms \ref{item:EA1} and \ref{item:EA1op}, by \cite[Proposition 4.2(2)]{HLN22}.
      We will show that $\ft$-inflations are closed under composition, the remaining axiom \ref{item:EA1op} follows dually.
      
      If $f \colon X_0 \to X_1$ and $g \colon X_1 \to Y_1$ are $\ft$-inflations then $gf$ is an $\fs$-inflation by \Cref{rem:theremark} and axiom \ref{item:EA1} for $(\cC, \EE, \fs)$.
      By completing the inflations $f$, $gf$ and $g$ to distinguished $n$-exangles, we may obtain the solid morphism of \Cref{dgr:inflations}
      \begin{figure}[H]
         \begin{tikzcd}
            X_0 \arrow[r, "f"] \arrow[d, equal]               & X_1 \arrow[r, "d_1^X"] \arrow[d, "g"]   & X_2 \arrow[r] \arrow[d, "\phi_2", dotted] & \cdots \arrow[r] & X_n \arrow[r] \arrow[d, "\phi_n", dotted] & X_{n+1} \arrow[d, "\phi_{n+1}", dotted] \arrow[r, "\delta", dashed]   & {} \\
            Y_0 \arrow[d, "f"] \arrow[r, "gf"]                & Y_1 \arrow[d, equal] \arrow[r]          & Y_2 \arrow[d, dotted] \arrow[r]           & \cdots \arrow[r] & Y_n \arrow[r] \arrow[d, dotted]           & Y_{n+1} \arrow[d, dotted] \arrow[r, "\rho", dashed]                   & {} \\
            Z_0 \arrow[r, "g"]                                & Z_1 \arrow[r]                           & Z_2 \arrow[r]                             & \cdots \arrow[r] & Z_n \arrow[r]                             & Z_{n+1} \arrow[r, "\gamma", dashed]                                   & {}
         \end{tikzcd}
         \caption{The $n$-exangles arising from $\ft$-inflations $f$ and $g$.}
         \label{dgr:inflations}
      \end{figure}
      \noindent such that the upper and lower row are $\ft$-distinguished $n$-exangles and the middle row is an $\fs$-distinguished $n$-exangle.
      Our plan is to replace the object $Y_{n+1}$ by an object $Y' \in \cA$ and $\rho \in \EE(Y_{n+1}, Y_0)$ by an $\FF$-extension $\varepsilon \in \FF(Y', Y_0)$, see \Cref{lem:extensionfix}.
      Then we want to realise $\varepsilon$ by a $\ft$-distinguished $n$-exangle and replace the $\ft$-inflation of this $n$-exangle by $gf$ using the relative Obscure Axiom.
   
      \begin{lemma} \label{lem:extensionfix}
         Suppose we are given the solid morphisms of \Cref{dgr:inflations} such that the upper and lower row, respectively, form $\ft$-distinguished $n$-exangles $\langle X_\bullet, \delta \rangle$ and $\langle Z_\bullet, \gamma \rangle$, and such that the middle row forms an $\fs$-distinguished $n$-exangle $\langle Y_\bullet, \rho \rangle$.
         Then there is an object $Y' \in \cA$ and morphisms $s \colon Y' \to Y_{n+1}$ and $t \colon Y_{n+1} \to Y'$ such that $(st)^\ast \rho = \rho$.
      \end{lemma}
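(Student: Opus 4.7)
The plan is to construct $Y' \in \cA$ using the $n$-extension closedness of $\cA$ applied to a suitable $\FF$-extension, and to obtain the morphisms $s$ and $t$ as components of lifts of morphisms of $\EE$-extensions. First I would extract the key compatibility data encoded in the two displayed morphisms of $n$-exangles in \Cref{dgr:inflations}: the top-to-middle morphism yields $\phi_{n+1}^\ast \rho = \delta$, and the middle-to-bottom morphism yields $f_\ast \rho = \psi^\ast \gamma$, where I write $\psi \colon Y_{n+1} \to Z_{n+1}$ for the last component of the middle-to-bottom lift. Note also that $Y_0 = X_0$, $Y_1 = Z_0$, and $Y_1 = Z_1$ all lie in $\cA$, while $X_{n+1}$ and $Z_{n+1}$ lie in $\cA$ by hypothesis; only the terms $Y_2, \dots, Y_{n+1}$ of the middle row are a priori outside $\cA$.

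The next step is to translate the sought condition into a more tractable form. By biadditivity of $\EE$ and the exactness of the $n$-exangle $\langle Y_\bullet, \rho \rangle$ applied to the object $Y_{n+1}$, the equation $(st)^\ast \rho = \rho$ is equivalent to $(st - \id_{Y_{n+1}})^\ast \rho = 0$, hence to $st - \id_{Y_{n+1}}$ factoring through $d^Y_n \colon Y_n \to Y_{n+1}$. To obtain such a factorization, I would apply axiom \ref{item:EA2} in $(\cC, \EE, \fs)$ to the lift $\phi_\bullet \colon \langle X_\bullet, \delta \rangle \to \langle Y_\bullet, \rho \rangle$ of $(\id_{X_0}, \phi_{n+1})$, producing an $\fs$-distinguished mapping cone $\langle M^\phi_\bullet, f_\ast \rho \rangle$ whose last morphism is a deflation $[\phi_{n+1}\ d^Y_n] \colon X_{n+1} \oplus Y_n \to Y_{n+1}$. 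This deflation encodes exactly the factorization structure needed to compare morphisms into $Y_{n+1}$ with those factoring through $\phi_{n+1}$ modulo $d^Y_n$. Dually, applying axiom \ref{item:EA2op} to the middle-to-bottom situation produces a mapping cocone with matching structure on the $Z_{n+1}$-side. I would then take $Y'$ to be the last term of a $\ft$-distinguished $n$-exangle in $\cA$ realizing an $\FF$-extension built from the data above via $n$-extension closedness of $\cA$, and construct $s$ and $t$ as components of lifts, supplied by axiom (R0), between this $n$-exangle and the middle row $\langle Y_\bullet, \rho \rangle$.

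The main obstacle is pinning down the correct $\FF$-extension to realize and thereby specifying $Y'$ concretely. Neither the obvious candidate $Y' = X_{n+1}$ (with $s = \phi_{n+1}$) nor $Y' = Z_{n+1}$ (with $t = \psi$) works on its own, since in each case the required identity $(st)^\ast \rho = \rho$ would force $\rho$ to be a pullback of $\delta$ or to lie in the image of $\psi^\ast$, which need not hold. The construction must therefore combine both directions simultaneously. Once $Y'$ is chosen, the verification of $(st)^\ast \rho = \rho$ becomes a diagram chase using the exactness of $\langle Y_\bullet, \rho \rangle$ at $Y_{n+1}$ together with the identities $\phi_{n+1}^\ast \rho = \delta$ and $f_\ast \rho = \psi^\ast \gamma$, reducing the claim to the deflation property of $[\phi_{n+1}\ d^Y_n]$ in the mapping cone $M^\phi_\bullet$.
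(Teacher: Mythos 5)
Your reductions are sound: rewriting $(st)^\ast\rho=\rho$ as $(\id_{Y_{n+1}}-st)^\ast\rho=0$, bringing in the $\fs$-distinguished mapping cone $\langle M^\phi_\bullet, f_\ast\rho\rangle$ of a good lift $\phi_\bullet\colon\langle X_\bullet,\delta\rangle\to\langle Y_\bullet,\rho\rangle$, and observing that its last differential $\left[\begin{smallmatrix}\phi_{n+1} & d^Y_n\end{smallmatrix}\right]$ controls which morphisms into $Y_{n+1}$ annihilate $f_\ast\rho$ --- all of this matches the paper. But the proposal stops exactly where the lemma's actual content begins: you never specify the $\FF$-extension to realise, the object $Y'$, or the morphisms $s$ and $t$, and you say yourself that this is ``the main obstacle.'' That is a genuine gap, not a routine verification left to the reader.

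The missing idea is this. The mapping cone $M^\phi_\bullet$ begins with $\left[\begin{smallmatrix}-d^X_1 \\ g\end{smallmatrix}\right]\colon X_1\to X_2\oplus Y_1$, and here $X_1$, $X_2$ and $Y_1=Z_1$ all lie in $\cA$; by \Cref{lem:reversemappingcone} (applied to the $\ft$-inflation $g$ and the morphism $-d^X_1$) this map is a $\ft$-inflation, so it completes to a $\ft$-distinguished $n$-exangle $\langle Z'_\bullet,\gamma'\rangle$ with all terms in $\cA$. Since $Z'_\bullet$ and $M^\phi_\bullet$ share their first two terms and first differential, \cite[Proposition 3.6(1)]{HLN21} gives comparison morphisms $s'_\bullet$, $t'_\bullet$ in both directions that are the identity in degrees $0$ and $1$, whence $(\id_{Y_{n+1}}-s'_{n+1}t'_{n+1})^\ast f_\ast\rho=0$ and so $\id_{Y_{n+1}}-s'_{n+1}t'_{n+1}=\phi_{n+1}h+d^Y_n h'$ by exactness against $\left[\begin{smallmatrix}\phi_{n+1} & d^Y_n\end{smallmatrix}\right]$. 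The ``combination of both directions'' you gesture at is then realised concretely by taking $Y'\coloneqq X_{n+1}\oplus Z'_{n+1}\in\cA$, $s\coloneqq\left[\begin{smallmatrix}\phi_{n+1} & s'_{n+1}\end{smallmatrix}\right]$ and $t\coloneqq\left[\begin{smallmatrix}h \\ t'_{n+1}\end{smallmatrix}\right]$, so that $\id_{Y_{n+1}}-st=d^Y_nh'$ kills $\rho$. Note also that the factorisation you need is through $\left[\begin{smallmatrix}\phi_{n+1} & d^Y_n\end{smallmatrix}\right]$ rather than through $d^Y_n$ alone; the extra $\phi_{n+1}h$ term is precisely why $Y'$ must contain the summand $X_{n+1}$. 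Without this construction the proof does not go through.
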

      \begin{proof}
         It follows from \cite[Proposition 3.6(2)]{HLN21} applied in $(\cC, \EE, \fs)$ that there is a morphism $\phi_\bullet \colon \langle X_\bullet, \delta \rangle \to \langle Y_\bullet, \rho \rangle$ with $\phi_0 = \id_{X_0}$ and $\phi_1 = g$ such that the mapping cone $\langle M^\phi_\bullet, f_\ast \rho \rangle$ of $\phi_\bullet$ is $\fs$-distinguished. 
         Notice that $Z_0 = X_1, Y_1 = Z_1, X_2 \in \cA$.
         By \Cref{lem:reversemappingcone} for the $\ft$-inflation $g \colon X_1 \to Y_1$ and the morphism $-d_1^X \colon X_1 \to X_2$ we have that $\left[ \begin{smallmatrix} -d^X_1 & g \end{smallmatrix}\right]^\top \colon X_1 \to X_2 \oplus Y_1$ is a $\ft$-inflation.
         Hence, there is a $\ft$-distinguished $n$-exangle $\langle Z'_\bullet, \gamma' \rangle$ with $Z'_0 = X_1$, $Z'_1 = X_2 \oplus Y_1$ and $d^{Z'}_0 = \left[ \begin{smallmatrix} -d^X_1 & g \end{smallmatrix}\right]^\top$.
         We obtain the solid morphisms of a diagram 
         \[\begin{tikzcd}[ampersand replacement = \&, cramped, row sep = 2.5em]
            X_1 \arrow[d, equal] \arrow[r, "{\left[\begin{smallmatrix} -d^X_1 \\ g \end{smallmatrix}\right]}"] \&[+.5em] X_2 \oplus Y_1 \arrow[d, equal] \arrow[r] \&[-1.5em] Z'_2 \arrow[r] \arrow[d, "s'_2", dotted, shift left]            \&[-1.5em] \cdots \arrow[r] \&[-1.5em] Z'_n \arrow[r] \arrow[d, "s'_n", dotted, shift left]                                                                                           \&[+2em] Z'_{n+1} \arrow[d, "s'_{n+1}", dotted, shift left] \arrow[r, "\gamma'", dashed]   \& {} \\
            X_1 \arrow[r, "{\left[\begin{smallmatrix} -d^X_1 \\ g \end{smallmatrix}\right]}"]                  \&        X_2 \oplus Y_1 \arrow[r]                  \&         X_3 \oplus Y_2 \arrow[r] \arrow[u, "t'_2", dotted, shift left]  \&         \cdots \arrow[r] \&         X_{n+1} \oplus Y_n \arrow[r, "{\left[\begin{smallmatrix} \phi_{n+1} & d^Y_n \end{smallmatrix}\right]}"] \arrow[u, "t'_n", dotted, shift left]  \&       Y_{n+1} \arrow[u, "t'_{n+1}", dotted, shift left] \arrow[r, "f_\ast \rho", dashed]  \& {}
         \end{tikzcd}\]
         where the upper row is the $\ft$-distinguished $n$-exangle $\langle Z'_\bullet, \gamma' \rangle$ and the lower row is the $\fs$-distinguished $n$-exangle $\langle M^\phi_\bullet, f_\ast \rho \rangle$.
         By \cite[Propostion 3.6(1)]{HLN21}, this gives rise to morphisms $s'_\bullet \colon \langle Z'_\bullet, \gamma' \rangle \to \langle M^\phi_\bullet, f_\ast \rho \rangle$ and $t'_\bullet \colon \langle M^\phi_\bullet, f_\ast \rho \rangle \to \langle Z'_\bullet, \gamma' \rangle$ with $s'_0 = \id_{X_1} = t'_0$ and  $s'_1 = \id_{X_2 \oplus Y_1} = t'_1$.
         This implies that $(\id_{X_1}, s'_{n+1}) \colon \gamma' \to f_\ast \rho$ and $(\id_{X_1}, t'_{n+1}) \colon f_\ast \rho \to \gamma'$ are morphisms of $\EE$-extensions. 
         Hence, $(\id_{X_1}, s'_{n+1} t'_{n+1}) \colon f_\ast \rho \to f_\ast \rho$ is also a morphism of $\EE$-extensions.
         Therefore,
         \[(\id_{Y_{n+1}} - s'_{n+1} t'_{n+1})^\ast f_\ast \rho = f_\ast \rho - (s'_{n+1} t'_{n+1})^ \ast f_\ast \rho = f_\ast \rho - (\id_{X_1})_\ast f_\ast \rho = 0\] 
         holds. Because the sequence
         \[\begin{tikzcd}[column sep = large, ampersand replacement = \&]
            \cC(Y_{n+1}, X_{n+1} \oplus Y_n) \ar[r, "{\cC(Y_{n+1}, \left[\begin{smallmatrix} \phi_{n+1} & d^Y_n \end{smallmatrix}\right])}"] \&[+4em] \cC(Y_{n+1}, Y_{n+1}) \ar[r, "(f_\ast \rho)_\sharp"] \& \EE(Y_{n+1}, X_1)
         \end{tikzcd}\]
         is exact, there is a morphism $\left[ \begin{smallmatrix} h & h' \end{smallmatrix}\right]^\top \colon Y_{n+1} \to X_{n+1} \oplus Y_{n}$ with 
         \[\id_{Y_{n+1}} - s'_{n+1} t_{n+1}' = \left[\begin{smallmatrix} \phi_{n+1} & d^Y_n \end{smallmatrix}\right] \left[ \begin{smallmatrix} h \\ h' \end{smallmatrix}\right] = \phi_{n+1} h + d^Y_n h'.\]
         
         Now we define $Y' \coloneqq X_{n+1} \oplus Z'_{n+1}$ and $s \coloneqq \left[\begin{smallmatrix} \phi_{n+1} & s'_{n+1} \end{smallmatrix}\right] \colon  X_{n+1} \oplus Z'_{n+1} \to Y_{n+1}$ as well as $\smash{t \coloneqq \left[\begin{smallmatrix} h & t'_{n+1} \end{smallmatrix}\right]^\top \colon  Y_{n+1} \to X_{n+1}} \oplus Z'_{n+1} $.
         We claim that these are the desired morphisms.
         Indeed, $\id_{Y_{n+1}} - st = \id_{Y_{n+1}} - s'_{n+1} t'_{n+1} - \phi_{n+1} h = d_n^Y h'$.
         Therefore, we obtain $(\id_{Y_{n+1}} - st)^\ast \rho = (d_n^Y h')^\ast \rho = (h')^\ast (d_n^Y)^\ast \rho = 0$ since already $(d_n^Y)^\ast \rho = 0$, as all $n$-exangles are $\EE$-attached complexes.
         Since $\langle X_\bullet, \delta \rangle$ and $\langle Z'_\bullet, \gamma' \rangle$ were $\ft$-distinguished, we have $Y' = X_{n+1} \oplus Z'_{n+1} \in \cA$ and the result follows. 
      \end{proof}

       We are ready to prove that $\ft$-inflations are closed under composition.

      \begin{lemma}\label{lem:comp}
         If $f \colon X \to Y$ and $g \colon Y \to Z$ are $\ft$-inflations, then so is $gf \colon X \to Z$. 
      \end{lemma}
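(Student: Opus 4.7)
The plan is to build on the setup already described before the lemma (Diagram \ref{dgr:inflations}) and Lemma \ref{lem:extensionfix}, and then to reduce the problem to a single application of the Relative Obscure Axiom (Proposition \ref{prop:obscure}). First I complete $f$, $gf$, and $g$ to the distinguished $n$-exangles of Diagram \ref{dgr:inflations}. The top and bottom rows $\langle X_\bullet, \delta \rangle$ and $\langle Z_\bullet, \gamma \rangle$ are $\ft$-distinguished while the middle row $\langle Y_\bullet, \rho \rangle$ is only $\fs$-distinguished, since a priori the terms $Y_2, \dots, Y_{n+1}$ may fail to lie in $\cA$. Crucially though $Y_0 = X_0 \in \cA$ and $Y_1 = Z_1 \in \cA$, and this will be the object through which the Obscure Axiom is invoked.

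Applying Lemma \ref{lem:extensionfix} I obtain an object $Y' \in \cA$ together with morphisms $s \colon Y' \to Y_{n+1}$ and $t \colon Y_{n+1} \to Y'$ such that $(st)^\ast \rho = \rho$. Setting $\varepsilon \coloneqq s^\ast \rho$, biadditivity gives $\varepsilon \in \EE(Y', Y_0) = \FF(Y', Y_0)$ since $Y', Y_0 \in \cA$. I then realise $\varepsilon$ by a $\ft$-distinguished $n$-exangle $\langle W_\bullet, \varepsilon \rangle$ with $W_0 = Y_0$ and $W_{n+1} = Y'$, and I denote its inflation by $h \coloneqq d^W_0 \colon Y_0 \to W_1$. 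A short check using $(st)^\ast \rho = \rho$ yields
\[
t^\ast \varepsilon \;=\; t^\ast s^\ast \rho \;=\; (st)^\ast \rho \;=\; \rho,
\]
so that $(\id_{Y_0}, t) \colon \rho \to \varepsilon$ is a morphism of $\EE$-extensions. By axiom (R0) applied in $(\cC, \EE, \fs)$, this morphism of extensions lifts to $\psi_\bullet \colon \langle Y_\bullet, \rho \rangle \to \langle W_\bullet, \varepsilon \rangle$ with $\psi_0 = \id_{Y_0}$ and $\psi_{n+1} = t$. Commutativity of the first square of $\psi_\bullet$ then gives the key identity $\psi_1 \circ (gf) = h$.

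Now $h$ is a $\ft$-inflation and the factorisation $h = \psi_1 \circ (gf)$ passes through $Y_1 \in \cA$. I therefore apply Proposition \ref{prop:obscure} (the Relative Obscure Axiom) to the morphisms $gf \colon Y_0 \to Y_1$ and $\psi_1 \colon Y_1 \to W_1$, whose middle term $Y_1$ lies in $\cA$: from the fact that $\psi_1 \circ (gf)$ is a $\ft$-inflation I conclude that $gf$ itself is a $\ft$-inflation, which is precisely the desired statement. The main obstacle is a conceptual one rather than computational, namely arranging that the extension $\rho$ can be replaced by an extension on objects of $\cA$ without losing the $\ft$-inflation $gf$; this is precisely what Lemma \ref{lem:extensionfix} accomplishes, and once one has the factorisation $\psi_1 \circ (gf) = h$ through an object of $\cA$, the Obscure Axiom does the remaining work.
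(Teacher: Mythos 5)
Your argument for $n \in \NN_{\geq 2}$ is essentially the paper's: build \Cref{dgr:inflations}, invoke \Cref{lem:extensionfix} to produce $Y' \in \cA$ with $(st)^\ast \rho = \rho$, realise $s^\ast \rho$ by a $\ft$-distinguished $n$-exangle using $n$-extension closedness, lift $(\id_{Y_0}, t)$ to a morphism of $n$-exangles, read off the factorisation of the new inflation through $gf$ and $Y_1 \in \cA$, and finish with \Cref{prop:obscure}. All of these steps are correct, including the verification that $t^\ast(s^\ast\rho) = \rho$ makes $(\id_{Y_0}, t)$ a morphism of extensions.

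There is, however, one genuine gap: the lemma is stated for all $n \in \NN_{\geq 1}$, but your final step applies \Cref{prop:obscure}, whose hypothesis is that $\cA$ is weakly idempotent complete \emph{or} $n \in \NN_{\geq 2}$. Neither holds in general when $n = 1$, so your proof does not cover that case. The paper disposes of $n = 1$ separately before starting the main argument: by \Cref{rem:extri1} and \cite[Remark 2.18]{NP19}, an extension closed subcategory of an extriangulated category is already extriangulated, so closure of inflations under composition is immediate from \ref{item:EA1} for $(\cA, \FF, \ft)$. You should add this reduction (or some substitute argument) for $n = 1$; with it, your proof is complete and coincides with the paper's.
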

      \begin{proof}
         If $n = 1$ then \cite[Remark 2.18]{NP19} and \Cref{rem:extri1} imply that the triplet $(\cA, \FF, \ft)$ is $1$-exangulated.
         The \namecref{lem:comp} follows from \ref{item:EA1} in this case.

         Let $n \in \NN_{\geq 2}$.
         Define $X_0 \coloneqq X \eqqcolon Y_0$, $X_1 \coloneqq Y \eqqcolon Z_0$ and $Y_1 \coloneqq Z \eqqcolon Z_1$.
         Since $f$ and $g$ are $\ft$-inflations, hence $\fs$-inflations, we know that $gf$ is a $\fs$-inflation by \ref{item:EA1} for $(\cC, \EE, \fs)$.
         This shows that we can construct the solid morphisms of \Cref{dgr:inflations} such that the upper row and lower row, respectively, are $\ft$-distinguished $n$-exangles $\langle X_\bullet, \delta \rangle$ and $\langle Z_\bullet, \gamma \rangle$ and such that the middle row is an $\fs$-distinguished $n$-exangle $\langle Y_\bullet, \rho \rangle$.
         By \Cref{lem:extensionfix} there is an object $Y' \in \cA$ and morphisms $s \colon Y' \to Y_{n+1}$ and $t \colon Y_{n+1} \to Y'$ with $(st)^\ast \rho = \rho$.
         We have $s^\ast \rho \in \EE(Y', Y_0)$.
         Since $\cA$ is $n$-extension closed there is a $\ft$-distinguished $n$-exangle $\langle Y'_\bullet, s^\ast \rho \rangle$.
         We obtain the solid morphisms of a commutative diagram
         \[\begin{tikzcd}[ampersand replacement=\&]
            {Y_0} \& {Y_1} \& {Y_2} \& \cdots \& {Y_n} \& {Y_{n+1}} \& {} \\
            {Y'_0} \& {Y'_1} \& {Y'_2} \& \cdots \& {Y'_n} \& Y' \& {} 
            \arrow[from=2-1, to=2-2, "d^{Y'_0}"]
            \arrow[from=1-2, to=1-3]
            \arrow[from=1-3, to=1-4]
            \arrow[from=1-4, to=1-5]
            \arrow[from=1-5, to=1-6]
            \arrow[Rightarrow, no head, to=1-1, from=2-1]
%           \arrow[shift left=1, dotted, from=1-2, to=2-2, "s_1"]
%           \arrow[shift left=1, dotted, from=1-3, to=2-3, "s_2"]
%           \arrow[shift left=1, dotted, from=1-5, to=2-5, "s_n"]
            \arrow[shift left=0, dotted, to=2-5, from=1-5, "t_n"]
            \arrow[shift left=0, dotted, to=2-3, from=1-3, "t_2"]
            \arrow[shift left=0, dotted, to=2-2, from=1-2, "t_1"]
            \arrow[shift right=0, from=1-1, to=1-2, "gf"]
            \arrow[shift right=0, from=2-2, to=2-3]
            \arrow[shift right=0, from=2-3, to=2-4]
            \arrow[shift right=0, from=2-4, to=2-5]
            \arrow[shift right=0, from=2-5, to=2-6]
%           \arrow["s", shift left=0, from=1-6, to=2-6]
            \arrow["t", shift left=1, to=2-6, from=1-6]
            \arrow["\rho", shift right=0, dashed, from=1-6, to=1-7]
            \arrow["{s^\ast \rho}", shift right=0, dashed, from=2-6, to=2-7]
         \end{tikzcd}\]
         where the upper row is the $\fs$-distinguished $n$-exangle $\langle Y_\bullet, \rho \rangle$ and the lower row is the $\ft$-distinguished $n$-exangle $\langle Y'_\bullet, s^\ast \rho \rangle$.
%         Since $\fs$ is an exact realisation, we can lift the morphism $(\id_{Y_0}, s) \colon s^\ast \rho \to \rho$ of $\EE$-extensions to a morphism $s_\bullet \colon \langle Y'_\bullet, s^\ast \rho \rangle \to \langle Y_\bullet, \rho \rangle$ of $n$-exangles.
         Since $(st)^\ast \rho = \rho$, the morphism $(\id_{Y_0}, t) \colon \rho \to s^\ast \rho$ is a morphism of $\EE$-extension and hence can be lifted to a morphism $t_\bullet \colon \langle Y_\bullet, \rho \rangle \to \langle Y'_\bullet, s^\ast \rho \rangle$ of $n$-exangles. 
         This gives us 
%         $gf = s_1 d^{Y'}_0$ and 
         $d^{Y'}_0 = t_1 gf$.
         Since $d^{Y'}_0 = t_1 (gf)$ is a $\ft$-inflation and $Y_1 \in \cA$, \Cref{prop:obscure}
%         By \Cref{lem:inflationfix} applied to $\langle Y'_\bullet, s^\ast \rho \rangle$ and $gf \colon Y'_0 \to Y'_1$ there is $\ft$-distinguished $n$-exangle $\langle Y''_\bullet, \rho' \rangle$ with $Y''_0 = Y'_0 = Y_0$, $Y''_1 = Y_1$ and $d^{Y''_0} = gf$.
         shows that $gf$ is a $\ft$-inflation.
         \end{proof} 

          The following theorem proves \cite[Theorem 1.1]{HZ21} in a more general setting.
         \begin{theorem}\label{thm:extensionclosed}
            Suppose that $(\cC, \EE, \fs)$ is an $n$-exangulated category with an $n$-extension closed additive subcategory $\cA \subset \cC$.
            Then $(\cA, \FF, \ft)$ is an $n$-exangulated category and $(\sJ_\cA, \Theta_\cA) \colon (\cA, \FF, \ft) \rightarrow (\cC, \EE, \fs)$ is a fully faithful $n$-exangulated functor.
         \end{theorem}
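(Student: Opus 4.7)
My plan is to assemble the theorem from the machinery already developed in the section. By \cite[Proposition 4.2(1)]{HLN22}, we already know that $\ft$ is an exact realisation of $\FF$ and that $(\cA, \FF, \ft)$ satisfies \ref{item:EA2} and \ref{item:EA2op}. By \cite[Proposition 4.2(2)]{HLN22}, as recalled at the start of the section, the only thing missing to conclude that $(\cA, \FF, \ft)$ is $n$-exangulated is the verification of \ref{item:EA1} and \ref{item:EA1op}. Axiom \ref{item:EA1}, the closure of $\ft$-inflations under composition, is precisely \Cref{lem:comp}, so that half is already done.

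For \ref{item:EA1op} I would argue by duality. The opposite category $(\cC^{\op}, \EE^{\op}, \fs^{\op})$ carries a canonical $n$-exangulated structure, and $\cA^{\op} \subset \cC^{\op}$ is again an $n$-extension closed additive subcategory. The induced data on $\cA^{\op}$ agrees with $(\FF, \ft)^{\op}$, and under this identification $\ft$-deflations in $\cA$ correspond to $\ft^{\op}$-inflations in $\cA^{\op}$. Applying \Cref{lem:comp} in the opposite $n$-exangulated category therefore yields closure of $\ft$-deflations under composition, which is \ref{item:EA1op}. Together with the previous paragraph this shows that $(\cA, \FF, \ft)$ is an $n$-exangulated category.

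It remains to check that $(\sJ_\cA, \Theta_\cA) \colon (\cA, \FF, \ft) \to (\cC, \EE, \fs)$ is a fully faithful $n$-exangulated functor. Fullness and faithfulness of $\sJ_\cA$ are automatic since $\cA \subset \cC$ is a full subcategory by \Cref{stp:thesetup}, and $\sJ_\cA$ is additive. In \Cref{def:induced} we have already observed that $\Theta_\cA \colon \FF(-,-) \Rightarrow \EE(\sJ_\cA-, \sJ_\cA-)$ is a natural isomorphism given by the inclusion $\delta \mapsto \delta$. Thus the only condition left to verify is the realisation compatibility: for any $A,C \in \cA$, any $\delta \in \FF(C,A)$ and any $X_\bullet \in \C^{n+2}_{(\cA;A,C)}$ with $[X_\bullet]_\cA = \ft(\delta)$, we must show that $[\sJ_\cA(X_\bullet)]_\cC = \fs((\Theta_\cA)_{C,A}(\delta))$. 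But by \Cref{rem:theremark} the condition $[X_\bullet]_\cA = \ft(\delta)$ is equivalent to $\langle \sJ_\cA(X_\bullet), \delta \rangle$ being $\fs$-distinguished with all terms in $\cA$, which is exactly the required identity $[\sJ_\cA(X_\bullet)]_\cC = \fs(\delta) = \fs((\Theta_\cA)_{C,A}(\delta))$.

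The main obstacle in the whole proof is purely organisational, since all substantive content has been concentrated in \Cref{lem:comp} and its supporting results; these in turn rest on the Relative Obscure Axiom \Cref{prop:obscure}, which is the genuine technical core. Once those are in hand, the theorem follows by inspection.
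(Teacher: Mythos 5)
Your proposal is correct and follows essentially the same route as the paper: reduce to axioms \ref{item:EA1} and \ref{item:EA1op} via \cite[Proposition 4.2]{HLN22}, invoke \Cref{lem:comp} for \ref{item:EA1} and its dual for \ref{item:EA1op}, and observe that the functor claim is immediate from \Cref{def:induced} and \Cref{rem:theremark}. You merely spell out the duality argument and the realisation-compatibility check in more detail than the paper does.
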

         \begin{proof}
            The first part follows from \cite[Proposition 4.2(2)]{HLN22}, \Cref{lem:comp} and its dual. 
            The second part is clear by definition of $(\sJ_{\cA}, \Theta_{\cA})$ and \Cref{rem:theremark}.
         \end{proof}

      \begin{remark}
         Notice that $(\cA, \FF, \ft)$ is an $n$-exangulated subcategory of $(\cC, \EE, \fs)$ in the sense of Haugland \cite[Definition 3.7]{Hau21}.
      \end{remark}

    \section{\texorpdfstring{$n$}{n}-extension closed subcategories of \texorpdfstring{$n$}{n}-exact categories}\label{sec:nex}
     Throughout this section we also assume that $\cD$ is an additive category.
    We recall the following definition.

    \begin{definition}[{\cite[Defintion 2.4]{Jas16}}]
      An object $X_\bullet \in \C^{n+2}_{\cD}$ is called an \emph{$n$-exact sequence} if for all $X \in \cD$ the sequences
      \[\begin{tikzcd}[ampersand replacement=\&, column sep = large]
        {0} \&[-2em] {\cD(X,X_0)} \& {\cD(X,X_1)} \& \cdots \& {\cD(X,X_{n+1})}
         \arrow[from=1-1, to=1-2]
        \arrow["{\cD(X,d^X_0)}", from=1-2, to=1-3]
        \arrow["{\cD(X,d^X_1)}", from=1-3, to=1-4]
        \arrow["{\cD(X,d^X_n)}", from=1-4, to=1-5]
      \end{tikzcd}\]
      and 
      \[\begin{tikzcd}[ampersand replacement=\&, column sep = large]
        { 0} \&[-2em] {\cD(X_{n+1},X)} \& {\cD(X_n, X)} \& \cdots \& {\cD(X_0, X)}
         \arrow[from=1-1, to=1-2]
        \arrow["{\cD(d^X_n,X)}", from=1-2, to=1-3]
        \arrow["{\cD(d^X_{n-1},X)}", from=1-3, to=1-4]
        \arrow["{\cD(d^X_0,X)}", from=1-4, to=1-5]
      \end{tikzcd}\]
      are exact in $\Ab$.
   \end{definition}

   \begin{notation}[{\cite[Definition 4.12]{HLN21}}]
      We denote by $\smash{\Lambda^{n+2}_{(\cD; A, C)}}$ the class of all homotopy equivalence classes of $n$-exact sequences in $\smash{\C^{n+2}_{(\cD;A,C)}}$.
   \end{notation}

       We recall the construction of $n$-exangulated categories from $n$-exact categories defined in \cite[Section 4.3]{HLN21}.
      Suppose that $(\cD, \cX)$ is an $n$-exact category in the sense of \cite[Definition 4.2]{Jas16}.
      For any pair $A, C \in \cD$ we define a class 
      \begin{equation}\label{eqn:Gdefinition} \GG_\cX(C, A) \coloneqq \set{[X_\bullet]_{\cD} \in \Lambda^{n+2}_{(\cD; A,C)}}{X_\bullet \in \cX} \end{equation} 
      as in \cite[Defintion 4.24]{HLN21}. 
      This does not have to be a set in general.
      \begin{definition}
         We say an $n$-exact category $(\cD, \cX)$ has \emph{\seg} if $\GG_\cX(C, A)$ as defined in (\ref{eqn:Gdefinition}) is a set for all $A, C \in \cD$.
      \end{definition} 
       We recall the following construction from \cite[Definition 4.16]{HLN21}.
      For $A, B, C \in \cD$, $[X_\bullet]_{\cD} \in \GG_\cX(C, A)$ and $a \colon A \to B$ we define 
      \begin{equation}\label{eqn:Gmorphism} \GG_\cX(C, a)\big([X_\bullet]_{\cD}\big) \coloneqq [Y_\bullet]_{\cD}\end{equation} 
      by picking an $Y_\bullet \in \cX \cap \C^{n+2}_{(\cD;B,C)}$ such that there is a morphism $f_\bullet \in \C^{n+2}_{\cD} (X_\bullet, Y_\bullet)$ with $f_0 = a$ and $f_{n+1} = \id_C$ making the solid part of the diagram
      \begin{equation}\label{eqn:n-pullback}\begin{tikzcd}[ampersand replacement=\&]
          {A} \& {X_1} \& \cdots \& {X_{n-1}} \& X_n \& C \\
          {B} \& {Y_2} \& \cdots \& {Y_{n-1}} \& X_n \& C
        \arrow["{a}", from=1-1, to=2-1]
        \arrow["{d^Y_0}", from=2-1, to=2-2]
        \arrow["{d^X_0}", from=1-1, to=1-2]
        \arrow["{f_1}", from=1-2, to=2-2]
        \arrow["{d^Y_1}", from=2-2, to=2-3]
        \arrow["{d^X_1}", from=1-2, to=1-3]
        \arrow["{d^Y_{n-2}}", from=2-3, to=2-4]
        \arrow["{d^X_{n-2}}", from=1-3, to=1-4]
        \arrow["{f_{n-1}}", from=1-4, to=2-4]
        \arrow["{d^Y_{n-1}}", from=2-4, to=2-5]
        \arrow["f_n", from=1-5, to=2-5]
        \arrow["{d^X_{n-1}}", from=1-4, to=1-5]
        \arrow["{d^X_{n}}", from=1-5, to=1-6, dotted]
        \arrow["{d^Y_{n}}", from=2-5, to=2-6, dotted]
        \arrow[no head, from=1-6, to=2-6, dotted, shift left=1pt]
        \arrow[no head, from=1-6, to=2-6, dotted, shift right=1pt]
      \end{tikzcd}\end{equation}     
      an $n$-pushout diagram as defined in \cite[Definition 2.11]{Jas16}.
      Such a $Y_\bullet$ exists by using \cite[Definition 4.2]{Jas16} and \cite[Proposition 4.8]{Jas16} and the assignment (\ref{eqn:Gmorphism}) is well-defined by \cite[Proposition 4.18]{HLN21}.
      Dually, we can define $\GG_\cX(c, A)\big([X_\bullet]_\cD \big)$ for $A,C,D \in \cD$, $[X_\bullet]_\cD \in \GG_\cX(C,A)$ and $c \colon D \to C$.

      If $(\cD, \cX)$ has additionally \seg, a bifunctor $\GG_\cX \colon \cD^{\op} \times \cD \to \Ab$ can be defined this way, see \cite[Definition 4.24, Lemma 4.26 and Proposition 4.32]{HLN21}.
      
      Recall that the additive structure on $\GG_\cX (C,A)$ for $C,A \in \cD$ is defined through Baer sums as follows.
      For $[X_\bullet]_{\cD}, [Y_\bullet]_{\cD} \in \GG_\cX(C, A)$ we have $[X_\bullet \oplus Y_\bullet]_{\cD} \in \GG_\cX(C \oplus C, A \oplus A)$ by \cite[Proposition 4.6]{Jas16} and we can define $[X_\bullet]_\cD + [Y_\bullet]_\cD \coloneqq \GG_{\cX}(\Delta_C, \nabla_A)([X_\bullet \oplus Y_\bullet]_\cD)$, where $\Delta_C = \left[\begin{smallmatrix} \id_C & \id_C \end{smallmatrix} \right]^{\top} \colon C \to C \oplus C$ is the diagonal and $\nabla_A = \left[\begin{smallmatrix} \id_A & \id_A \end{smallmatrix} \right] \colon A \oplus A \to A$ is the codiagonal, see \cite[Definition 4.28]{HLN21}.

      \begin{notation}\label{def:exact->exangulated}
          For an $n$-exact category $(\cD, \cX)$ with \seg we denote by $\GG_{\cX}$ the functor constructed above and by $\fr_\cX$ the assigment $\fr_\cX(\delta) = [X_\bullet]_{\cD}$ for $A,C \in \cD$ and $\delta = [X_\bullet]_{\cD} \in \GG_\cX(C,A)$.
      \end{notation}
      
      \begin{proposition}\label{prop:exact->exangulated}
         If $(\cD,\cX)$ is an $n$-exact category with \seg then $(\cD, \GG_{\cX}, \fr_{\cX})$ is an $n$-exangulated category with monic $\fr_{\cX}$-inflations and epic $\fr_{\cX}$-deflations.
      \end{proposition}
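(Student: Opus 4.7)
The plan is to invoke the construction carried out in \cite[Section 4.3]{HLN21} for the bulk of the verification, and then use the strong Obscure Axiom (\Cref{cor:obscure}) to supply the ingredient that was missing there. First I would recall that by the results culminating in \cite[Definition 4.24, Lemma 4.26, Proposition 4.32]{HLN21} the assignment $\GG_\cX$ indeed defines a biadditive functor $\cD^{\op} \times \cD \to \Ab$, and that $\fr_\cX$ is an exact realisation of $\GG_\cX$ (so axioms (R0)--(R2) hold). This reduces the problem to checking \ref{item:EA1}, \ref{item:EA1op}, \ref{item:EA2}, \ref{item:EA2op}, together with the monic/epic assertions.

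For \ref{item:EA1} and \ref{item:EA1op} I would observe that, by construction, the $\fr_\cX$-inflations (resp.\ $\fr_\cX$-deflations) are precisely the inflations (resp.\ deflations) of $(\cD, \cX)$. Hence these two axioms reduce directly to the corresponding axioms of Jasso \cite[Definition 4.2]{Jas16}, which state that the classes of inflations and deflations of an $n$-exact category are each closed under composition.

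The main obstacle lies in \ref{item:EA2} and \ref{item:EA2op}. Following the strategy of HLN21, given the data of \ref{item:EA2} I would use the $n$-pushout property \cite[Definition 2.11]{Jas16} of $(\cD, \cX)$ to produce a lift $f_\bullet$, and then aim to recognise the mapping cone $\langle M^f_\bullet, (d^X_0)_\ast \rho\rangle$ as an $\fr_\cX$-distinguished $n$-exangle. The subtle point is that, while one can verify by a diagram chase that $M^f_\bullet$ is an $n$-exact sequence in the sense of the underlying additive category, one still needs to know that its leading differential is an inflation of $(\cD,\cX)$. This leading differential factors through a morphism whose postcomposition with another morphism is manifestly an inflation, so the strong Obscure Axiom \Cref{cor:obscure} applies and yields the desired conclusion for $n \in \NN_{\geq 2}$. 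Dually one handles \ref{item:EA2op}. For $n = 1$ the assertion is already known via \Cref{rem:extri1} together with \cite[Example 2.13]{NP19}, where exact categories are realised as extriangulated categories.

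Finally, the monic inflations and epic deflations assertion is immediate from Jasso's framework: inflations in an $n$-exact category are monomorphisms and deflations are epimorphisms (see \cite[Proposition 2.7]{Jas16}), and these classes coincide with the $\fr_\cX$-inflations and $\fr_\cX$-deflations respectively.
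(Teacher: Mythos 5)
There is a genuine gap in your treatment of \ref{item:EA2} and \ref{item:EA2op}: the appeal to \Cref{cor:obscure} is circular. The strong Obscure Axiom is a statement \emph{about} $n$-exangulated categories --- its proof runs through \Cref{stp:thesetup} and uses the axioms of an ambient $n$-exangulated structure --- so you cannot invoke it for $(\cD, \GG_\cX, \fr_\cX)$ while you are still in the middle of verifying that this triplet is $n$-exangulated. Fortunately, the ingredient you think is missing is not the Obscure Axiom at all: Jasso's axioms for an $n$-exact category, in particular (E2) together with \cite[Proposition 4.8]{Jas16}, already guarantee that the mapping cone of the relevant $n$-pushout diagram, with leading differential $\left[\begin{smallmatrix}-d_0^X \\ f_0\end{smallmatrix}\right] \colon X_0 \to X_1 \oplus Y_0$, is itself a conflation of $(\cD,\cX)$. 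That is exactly what is needed to recognise $\langle M^f_\bullet, (d^X_0)_\ast\rho\rangle$ as $\fr_\cX$-distinguished, and it is how \cite[Proposition 4.34]{HLN21} proceeds. The paper's proof of this proposition is in fact just a citation of \cite[Proposition 4.34]{HLN21} and \cite[Remark 4.35]{HLN21} (the latter for the monic/epic claim).

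You have also mislocated where the strong Obscure Axiom enters the correspondence of \Cref{thm:correspondence}. It is needed for the \emph{converse} direction, \Cref{prop:exangulated->exact}: to pass from an $n$-exangulated category with monic $\fs$-inflations and epic $\fs$-deflations back to an $n$-exact category, one must verify conditions (a) and (b) of \cite[Proposition 4.37(2)]{HLN21}, and these are precisely instances of \Cref{cor:obscure} and its dual. The direction you are asked to prove here was already complete in \cite[Section 4.3]{HLN21} without any idempotent-completeness or Obscure-Axiom input. The remaining parts of your proposal --- biadditivity of $\GG_\cX$, exactness of the realisation $\fr_\cX$, the reduction of \ref{item:EA1} and \ref{item:EA1op} to Jasso's composition axioms, the $n=1$ case via \Cref{rem:extri1}, and the monic/epic assertion --- are correct and match the content of the cited results.
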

      \begin{proof}
         The proof is given in \cite[Propsition 4.34]{HLN21} and \cite[Remark 4.35]{HLN21}.
      \end{proof}

      \begin{definition}
         We say an $n$-exangulated category $(\cD, \GG, \fr)$ is \emph{$n$-exact} if there exists an $n$-exact structure $\cX \subset \smash{\C^{n+2}_{\cD}}$ on $\cD$ and an equivalence of $n$-exangulated categories $(\Id_\cD, \Gamma) \colon  (\cD, \GG, \fr) \to (\cD, \GG_\cX, \fr_\cX)$.
      \end{definition}

       Conversely, we can construct $n$-exact categories from $n$-exangulated categories using \cite[Propsition 4.37]{HLN21} and the strong Obscure Axiom.
      \begin{notation}[{\cite[Lemma 4.36]{HLN21}}]\label{def:exangulated->exact}
         For an $n$-exangulated category $(\cC, \EE, \fs)$, denote by $\cX_{(\EE,\fs)}$ the class of all $\fs$-conflations.
      \end{notation}
      \begin{proposition} \label{prop:exangulated->exact}
         Suppose $(\cC, \EE, \fs)$ is an $n$-exangulated category such that all $\fs$-inflations are monic and all $\fs$-deflations are epic, then $(\cC, \cX_{(\EE, \fs)})$ is an $n$-exact category.
      \end{proposition}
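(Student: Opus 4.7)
The plan is to verify that the class $\cX_{(\EE,\fs)}$ of $\fs$-conflations satisfies Jasso's axioms for an $n$-exact structure, as in \cite[Definition 4.2]{Jas16}. The main tool will be \cite[Proposition 4.37]{HLN21}, which promotes an $n$-exangulated category to an $n$-exact one once one knows that $\fs$-inflations are monic, $\fs$-deflations are epic, and the (strong) Obscure Axiom together with its dual both hold.

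The first step is to observe that, under our monic/epic hypotheses, every $\fs$-conflation is automatically an $n$-exact sequence. In an $\fs$-distinguished $n$-exangle the two long Hom sequences of \cite[Definition 2.13]{HLN21} are already exact at the middle positions because of the weak (co)kernel data encoded in the exangle; monicity of $d^X_0$ and epicity of $d^X_n$ then upgrade exactness at the two end positions, so that the Hom sequences become honest left/right exact sequences in $\Ab$. After this, the remaining closure axioms of Jasso split into two groups: closure of inflations and deflations under composition, which is immediate from \ref{item:EA1} and \ref{item:EA1op}; and the existence of $n$-pushouts along inflations and $n$-pullbacks along deflations, which is extracted from the good lifts provided by \ref{item:EA2} and \ref{item:EA2op} together with the monic/epic assumption, in the way carried out in \cite[Section 4.3]{HLN21}.

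The main obstacle is the Obscure Axiom required in \cite[Proposition 4.37]{HLN21}. For $n \in \NN_{\geq 2}$ this is precisely the content of \Cref{cor:obscure}, which holds in $(\cC, \EE, \fs)$ with no additional hypothesis on the underlying category; the dual statement is symmetric. For $n = 1$ we bypass the Obscure Axiom altogether by appealing to the classical correspondence between extriangulated and exact categories: by \cite[Corollary 3.18]{NP19} (see also \cite[Example 2.13]{NP19}), any extriangulated category whose inflations are monic and whose deflations are epic is already an exact category in which the admissible exact sequences are exactly the $\fs$-conflations, which is precisely our $\cX_{(\EE,\fs)}$. Assembling these pieces yields the desired $n$-exact structure on $\cC$.
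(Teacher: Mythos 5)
Your proposal is correct and follows essentially the same route as the paper: for $n \geq 2$ it invokes \cite[Proposition 4.37(2)]{HLN21} with the hypotheses supplied by \Cref{cor:obscure} and its dual, and for $n=1$ it falls back on \Cref{rem:extri1} together with \cite[Corollary 3.18]{NP19}. The additional remarks about verifying Jasso's axioms directly are harmless but not needed once those citations are in place.
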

      \begin{proof}
         For $n = 1$ this follows from \Cref{rem:extri1} and \cite[Corollary 3.18]{NP19}.
         For $n \in \NN_{\geq 2}$ this follows from \cite[Proposition 4.37(2)]{HLN21} because the two conditions (a) and (b) of \cite[Proposition 4.37(2)]{HLN21} are satisfied by \Cref{cor:obscure} and its dual.
      \end{proof}

      Showing that the construction of \Cref{prop:exact->exangulated} and \Cref{prop:exangulated->exact} are inverse to each other relies on the following \Cref{lem:natural-transformation,lem:exactidentity}.
      We need some setup.

         Let $(\cC, \EE, \fs)$ be an $n$-exangulated category in which all $\fs$-inflations are monic and all $\fs$-deflations are epic.
         Then $(\cC, \cX_{(\EE, \fs)})$ is an $n$-exact structure by \Cref{prop:exangulated->exact}.
         We obtain a class $\GG_{\cX_{(\EE, \fs)}}(C,A)$ for $C,A \in \cC$ through the assignment (\ref{eqn:Gdefinition}).
         We define a map 
         \[\Gamma_{(C,A)} \colon \EE(C, A) \to \GG_{\cX_{(\EE, \fs)}}(C, A),\, \delta \mapsto \fs(\delta)\] 
         for $C, A \in \cC$, which is bijective by \cite[Lemma 4.36(3)]{HLN21}.
         In particular, $(\cC, \cX_{(\EE, \fs)})$ has \seg by the Axiom of Replacement using that $\EE(C, A)$ is a set.
         Hence, $(\cC, \GG_{\cX_{(\EE, \fs)}}, \fr_{\cX_{(\EE, \fs)}})$ is an $n$-exangulated category by \Cref{prop:exact->exangulated}.
      \begin{lemma}\label{lem:natural-transformation}
         Under the above assumptions the following hold.
         \begin{enumerate}
            \item\label{item:natural1} For $A,B,C \in \cC$, $a \colon A \to B$ and $\delta \in \EE(C,A)$ we have 
               \[\Gamma_{(C,B)}\big(\EE(C,a)(\delta)\big) = \GG_{\cX_{(\EE, \fs)}}(C, a)\big(\Gamma_{(C,A)}(\delta)\big).\]
            \item\label{item:natural2} For $A,C,D \in \cC$, $c \colon D \to C$ and $\delta \in \EE(C,A)$ we have 
               \[\Gamma_{(D,A)}\big(\EE(c,A)(\delta)\big) = \GG_{\cX_{(\EE, \fs)}}(c, A)\big(\Gamma_{(C,A)}(\delta)\big).\]
            \item\label{item:additive} For $A,C \in \cC$ and $\delta, \rho \in \EE(C,A)$ we have 
               \[ \Gamma_{(C,A)}(\delta + \rho) = \Gamma_{(C,A)}(\delta) + \Gamma_{(C,A)}(\rho).\]
         \end{enumerate}
         Hence, $\Gamma \colon \EE \Rightarrow \GG_{\cX_{(\EE, \fs)}}$ is a natural isomorphism and
         \[(\Id_\cC, \Gamma) \colon (\cC, \EE, \fs) \to (\cC, \GG_{\cX_{(\EE, \fs)}}, \fr_{\cX_{(\EE, \fs)}}) \] 
         is an $n$-exangulated equivalence.
      \end{lemma}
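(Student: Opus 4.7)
The plan is to verify separately the three functorial conditions \cref{item:natural1}, \cref{item:natural2}, and \cref{item:additive}, then assemble them into the equivalence statement. Naturality in each slot will be deduced from good lifts supplied by \ref{item:EA2op} and \ref{item:EA2}, additivity from the Baer sum formula, and the final equivalence statement will follow essentially tautologically from the definition of $\fr_{\cX_{(\EE,\fs)}}$.

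For \cref{item:natural1}, I would fix $a \in \cC(A, B)$ and $\delta \in \EE(C, A)$, and pick an $\fs$-realization $X_\bullet$ of $\delta$ and an $\fs$-realization $Y_\bullet$ of $a_\ast \delta$ with $Y_0 = B$ and $Y_{n+1} = C$. Since $(a, \id_C) \colon \delta \to a_\ast \delta$ is a morphism of $\EE$-extensions, axiom \ref{item:EA2op} supplies a good lift $f_\bullet \colon X_\bullet \to Y_\bullet$ with $f_0 = a$ and $f_{n+1} = \id_C$, whose mapping cocone $\langle N^f_\bullet, (d^Y_n)^\ast(a_\ast \delta) \rangle$ is $\fs$-distinguished. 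The crucial step is then to recognise the resulting commutative diagram of shape \eqref{eqn:n-pullback} as an $n$-pushout diagram in $(\cC, \cX_{(\EE,\fs)})$ in the sense of \cite[Definition 2.11]{Jas16}: the fact that $N^f_\bullet \in \cX_{(\EE,\fs)}$ is an $\fs$-conflation, combined with the monicity of $\fs$-inflations and epicity of $\fs$-deflations, translates precisely into the exact-sequence condition defining an $n$-pushout, as is implicit in \cite[Proposition 4.37]{HLN21}. By the construction \eqref{eqn:Gmorphism} of $\GG_{\cX_{(\EE,\fs)}}(C, a)$ this yields $\GG_{\cX_{(\EE,\fs)}}(C, a)(\fs(\delta)) = \fs(a_\ast \delta)$, as required. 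Part \cref{item:natural2} is entirely dual, using \ref{item:EA2}, mapping cones, and the characterisation of $n$-pullbacks.

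For \cref{item:additive}, I would write the Baer sum in $\EE$ as $\delta + \rho = (\nabla_A)_\ast (\Delta_C)^\ast (\delta \oplus \rho)$, matching the corresponding formula on the $\GG_{\cX_{(\EE,\fs)}}$ side recalled just before \Cref{def:exact->exangulated}. Since direct sums of $\fs$-distinguished $n$-exangles are $\fs$-distinguished by \cite[Proposition 3.3]{HLN21}, applied componentwise, $\Gamma$ sends $\delta \oplus \rho$ to $\Gamma_{(C,A)}(\delta) \oplus \Gamma_{(C,A)}(\rho)$. Applying \cref{item:natural1} for $\nabla_A$ and \cref{item:natural2} for $\Delta_C$ then transports the Baer sum formula across $\Gamma$ and gives $\Gamma_{(C,A)}(\delta + \rho) = \Gamma_{(C,A)}(\delta) + \Gamma_{(C,A)}(\rho)$.

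With all three items established, $\Gamma$ is a natural transformation whose components are bijective by \cite[Lemma 4.36(3)]{HLN21}, hence a natural isomorphism. The tuple $(\Id_\cC, \Gamma)$ is an $n$-exangulated functor essentially tautologically: the compatibility $[X_\bullet]_\cC = \fs(\delta) \Rightarrow [X_\bullet]_\cC = \fr_{\cX_{(\EE,\fs)}}(\Gamma_{(C,A)}(\delta))$ holds because $\fr_{\cX_{(\EE,\fs)}}([X_\bullet]_\cC) = [X_\bullet]_\cC$ by definition of $\fr_{\cX_{(\EE,\fs)}}$ and $\Gamma_{(C,A)}(\delta) = \fs(\delta) = [X_\bullet]_\cC$. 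Since $\Id_\cC$ is an equivalence, this makes $(\Id_\cC, \Gamma)$ an $n$-exangulated equivalence. The main obstacle is the identification of the good-lift diagram as a genuine $n$-pushout in \cref{item:natural1}, which is where the hypotheses that $\fs$-inflations are monic and $\fs$-deflations are epic (and so, ultimately, the strong Obscure Axiom via \Cref{prop:exangulated->exact}) enter essentially.
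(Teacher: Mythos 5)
Your proof is correct and follows essentially the same route as the paper: part (1) via a lift of $(a,\id_C)$ recognised as an $n$-pushout diagram, part (2) dually, part (3) via the Baer sum together with (1), (2) and \cite[Proposition 3.3]{HLN21}, and the final claim from bijectivity of the components plus the tautological compatibility of the realisations. The only difference is in part (1): the paper takes an arbitrary lift supplied by (R0) and cites the dual of \cite[Lemma 4.36(1)]{HLN21} to see that it is an $n$-pushout, whereas you take a good lift from \ref{item:EA2op} and read off the $n$-cokernel condition directly from the distinguished mapping cocone (using epicity of $\fs$-deflations for the leading term) --- both are valid, and your version is essentially how that cited lemma is proved.
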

      \begin{proof}
         (\ref{item:natural1}):\; Let $\rho \coloneqq \EE(C,a)(\delta)$ and $\langle X_\bullet, \delta \rangle, \langle Y_\bullet, \rho \rangle$ be $\fs$-distinguished $n$-exangles.
         Notice that $(a, \id_C) \colon \delta \to \rho$ is a morphism of $\EE$-extensions.
         %By the axiom (EA2${}^{\op}$) for $(\cC, \EE, \fs)$ there is a good lift $f_\bullet \colon \langle Y_\bullet, \delta' \rangle \to \langle X_\bullet, \delta \rangle$ with $f_0 = \id_A$ and $f_{n+1} = c$.
         There exists a lift $f_\bullet \colon X_\bullet \to Y_\bullet$ with $f_0 = a$ and $f_{n+1} = \id_C$.
         %By the definitions of $n$-pullback and good lift, this means that (\ref{eqn:n-pullback}) is an $n$-pullback diagram.
         By the dual of \cite[Lemma 4.36(1)]{HLN21}, this means that the solid part of (\ref{eqn:n-pullback}) is an $n$-pushout diagram.
         We have $X_\bullet \in \cX_{(\EE, \fs)} \cap \C^{n+2}_{(\cC;A,C)}$ and  $Y_\bullet \in \cX_{(\EE, \fs)} \cap \C^{n+2}_{(\cC;B,C)}$. 
         Hence, 
         \[\Gamma_{(C,B)} \big(\EE(C,a)(\delta)\big) = \fs(\rho) =  [Y_\bullet]_{\cC} = \GG_{\cX_{(\EE, \fs)}}(C, a)\big([X_\bullet]_{\cC}\big) = \GG_{\cX_{(\EE, \fs)}}(C, a)\big(\Gamma_{C,A}(\delta)\big)  \]
         by definition of $\GG_{\cX_{(\EE, \fs)}}(C, a)$. In the same way (\ref{item:natural2}) can be shown.
      
         (\ref{item:additive}):\; Let $\delta, \rho \in \EE(C, A)$ for $A,C \in \cC$.
         Then we have $\delta + \rho = \EE(\Delta_C, \nabla_A)(\delta \oplus \rho)$, where $\Delta_C \coloneqq \left[\begin{smallmatrix} \id_C & \id_C \end{smallmatrix} \right]^\top \colon C \to C \oplus C$ and $\nabla_A = \left[\begin{smallmatrix} \id_A & \id_A \end{smallmatrix} \right] \colon A \oplus A \to A$ as mentioned in \cite[Definition 2.6]{HLN21}.
         We have
         \[ \Gamma_{(A,C)} (\delta + \rho) = \Gamma_{(C,A)}\big(\EE(\Delta_C, \nabla_A)(\delta \oplus \rho)\big) = \GG_{\cX_{(\EE, \fs)}}(\Delta_C, \nabla_A)\big(\Gamma_{(C\oplus C, A \oplus A)}(\delta \oplus \rho)\big)\]
         by using (\ref{item:natural1}) and (\ref{item:natural2}).
         Let $X_\bullet$ be an $\fs$-realisation of $\delta$ and $Y_\bullet$ be an $\fs$-realisation of $\rho$.
         Then $[X_\bullet \oplus Y_\bullet]_{\cC} = \fs(\delta \oplus \rho)$, by \cite[Proposition 3.3]{HLN21}.
         Hence,
         \begin{align*} \GG_{\cX_{(\EE, \fs)}}(\Delta_C, \nabla_A)\big(\Gamma_{(C\oplus C, A \oplus A)}(\delta \oplus \rho)\big) &= \GG_{\cX_{\EE, \fs}}(\Delta_C, \nabla_A)\big([X_\bullet \oplus Y_\bullet]_{\cC}) \\
                                                                                                                                 &= [X_\bullet]_{\cC} + [Y_\bullet]_{\cC} \\ 
                                                                                                                                 &= \Gamma_{(C,A)}(\delta) + \Gamma_{(C,A)}(\rho)
         \end{align*}
         as addition in $\GG_{\cX_{(\EE, \fs)}}(C,A)$ is defined through Baer sums. 
         Therefore, (\ref{item:additive}) holds.

         Finally, $\Gamma \colon \EE \Rightarrow \GG_{\cX_{(\EE, \fs)}}$ is a natural isomorphism of functors $\cC^{\op} \times \cC \to \Ab$ by (\ref{item:natural1}), (\ref{item:natural2}) and (\ref{item:additive}). 
         It is clear that $\fs(\delta) = \fr_{\cX_{(\EE, \fs)}}(\Gamma_{(C,A)}(\delta))$ for $A,C \in \cC$ and $\delta \in \EE(C, A)$, by definition. 
         Hence, $(\Id_\cC, \Gamma) \colon (\cC, \EE, \fs) \to (\cC, \GG_{\cX_{(\EE, \fs)}}, \fr_{\cX_{(\EE, \fs)}})$ is an $n$-exangulated equivalence.
      \end{proof}
         Suppose that $(\cD, \cX)$ is an $n$-exact category with \seg. 
         The $n$-exangulated category $(\cD, \GG_{\cX}, \fr_{\cX})$, as defined in \Cref{prop:exact->exangulated}, has monic $\fr_{\cX}$-inflations and epic $\fr_{\cX}$-deflations.
         Therefore, an $n$-exact category $(\cD, \cX_{(\GG_{\cX}, \fr_{\cX})})$ can be defined using \Cref{prop:exangulated->exact}.
      \begin{lemma}\label{lem:exactidentity}
         Under the above assumptions we have $\cX = \cX_{(\GG_\cX, \fr_\cX)}$.
      \end{lemma}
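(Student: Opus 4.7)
My plan is to prove the equality by unwinding definitions in both directions; the only nontrivial content is that $\cX$ is closed under homotopy equivalence in $\C^{n+2}_{(\cD;A,C)}$, which I will treat as the main obstacle.

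For the inclusion $\cX \subseteq \cX_{(\GG_\cX, \fr_\cX)}$, let $X_\bullet \in \cX$ with $A \coloneqq X_0$ and $C \coloneqq X_{n+1}$. Directly from (\ref{eqn:Gdefinition}) we have $\delta \coloneqq [X_\bullet]_{\cD} \in \GG_\cX(C,A)$, and from \Cref{def:exact->exangulated} we have $\fr_\cX(\delta) = [X_\bullet]_{\cD}$. Thus $X_\bullet$ is an $\fr_\cX$-realisation of $\delta$, in particular an $\fr_\cX$-conflation, so $X_\bullet \in \cX_{(\GG_\cX, \fr_\cX)}$ by \Cref{def:exangulated->exact}.

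For the converse inclusion, suppose $X_\bullet \in \cX_{(\GG_\cX, \fr_\cX)}$, with $A \coloneqq X_0$ and $C \coloneqq X_{n+1}$. Then there exists $\delta \in \GG_\cX(C,A)$ with $[X_\bullet]_\cD = \fr_\cX(\delta)$. By the defining condition (\ref{eqn:Gdefinition}) of $\GG_\cX$, we can write $\delta = [Y_\bullet]_\cD$ for some $Y_\bullet \in \cX \cap \C^{n+2}_{(\cD;A,C)}$, and by \Cref{def:exact->exangulated} we have $\fr_\cX(\delta) = [Y_\bullet]_\cD$. Hence $[X_\bullet]_\cD = [Y_\bullet]_\cD$, i.e.\ $X_\bullet$ and $Y_\bullet$ are homotopy equivalent in $\C^{n+2}_{(\cD;A,C)}$.

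The remaining and genuinely nontrivial step is to upgrade this homotopy equivalence to the conclusion $X_\bullet \in \cX$; in other words, to verify that $\cX$ is closed under homotopy equivalence in $\C^{n+2}_{(\cD;A,C)}$. I expect this to be the main obstacle. I will handle it by applying \cite[Proposition 4.8]{Jas16} to the chain maps realising the homotopy equivalence together with the fact that $Y_\bullet$ is $n$-exact: the section/retraction structure forced by the homotopy allows one to replace a homotopy equivalence of $n$-exact sequences by a bona fide componentwise isomorphism of complexes fixing $A$ and $C$. Since $n$-exact structures are, by definition, closed under such isomorphisms, this yields $X_\bullet \in \cX$. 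Combining the two inclusions completes the proof. In the write-up I will cite Jasso for this closure-under-homotopy-equivalence fact rather than reproving it; if it is not explicitly available there, the argument above via the $n$-exactness of $Y_\bullet$ can be included as a short standalone lemma.
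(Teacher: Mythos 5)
Your two unwinding steps (both inclusions, reducing everything to the single question of whether $\cX$ is closed under homotopy equivalence in $\C^{n+2}_{(\cD;A,C)}$) match the paper exactly, and you have correctly identified where the real content lies. The problem is your proposed resolution of that step. A homotopy equivalence of $n$-exact sequences fixing the end terms \emph{cannot} in general be upgraded to a componentwise isomorphism of complexes: for $n \geq 2$ the sequence $A \to X_1 \oplus D \xrightarrow{\left[\begin{smallmatrix} d & 0 \\ 0 & \id_D\end{smallmatrix}\right]} X_2 \oplus D \to \cdots \to C$ is an $n$-exact sequence homotopy equivalent to $A \to X_1 \to X_2 \to \cdots \to C$ rel endpoints, but the middle terms differ by the contractible summand $D$, so no isomorphism of complexes exists. (Only for $n=1$ does the short five lemma rescue this.) Moreover, \cite[Proposition 4.8]{Jas16} concerns the existence of $n$-pushout diagrams and is not the relevant tool here.

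The fix is both simpler and already built into the axioms: the homotopy equivalence $[X_\bullet]_\cD = [Y_\bullet]_\cD$ provides a morphism $f_\bullet \colon X_\bullet \to Y_\bullet$ of $n$-exact sequences with $f_0 = \id_A$ and $f_{n+1} = \id_C$ (note $X_\bullet$ is itself an $n$-exact sequence, since applying $\cD(W,-)$ and $\cD(-,W)$ to a chain homotopy equivalence preserves exactness of the resulting sequences). Since $f_{n+1}$ and $f_{n+2} \coloneqq f_0$ are isomorphisms, $f_\bullet$ is a \emph{weak isomorphism} in the sense of \cite[Definition 2.9]{Jas16}, and closure of $\cX$ under weak isomorphisms is part of the definition of an $n$-exact category \cite[Definition 4.2]{Jas16}. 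This is exactly the route the paper takes; you were attempting to derive a statement that is false in order to invoke a weaker closure property (closure under genuine isomorphisms), when the definition directly hands you the stronger closure property you need.
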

      \begin{proof}
         Let $X_\bullet \in \cX$ and $\delta \coloneqq [X_\bullet]_{\cD} \in \GG_{\cX}(X_{n+1}, X_0)$.
         Then $\fr_{\cX}(\delta) = [X_\bullet]_{\cD}$, by definition.
         Hence, $X_\bullet$ is an $\fr_\cX$-conflation.
         This means $X_\bullet \in \cX_{(\GG_{\cX}, \fr_{\cX})}$, again by definition.

         Conversely, let $X_\bullet \in \cX_{(\GG_{\cX}, \fr_{\cX})}$.
         Then $X_\bullet$ is an $\fr_\cX$-conflation.
         By definition of $\GG_\cX$ and $\fr_\cX$ this means there is a $Y_\bullet \in \cX$ such that $[Y_\bullet]_\cD = [X_\bullet]_\cD$.
         This implies the existence of an equivalence $X_\bullet \to Y_\bullet$ of $n$-exact sequences in the sense of \cite[Definition 2.9]{Jas16}.
         By \cite[Definition 4.2]{Jas16} the class $\cX$ is closed under weak isomorphisms  and hence $X_\bullet \in \cX$.
      \end{proof}
       We can now summarize \cite[Section 4.3]{HLN21} and show that the two constructions given are inverse to each other.

      \begin{theorem}\label{thm:correspondence}
         \Cref{prop:exact->exangulated} and \ref{prop:exangulated->exact} induce a one-to-one correspondence
         \begin{align*}
            \left\{\parbox{13em}{\centering $n$-exact structures $(\cD, \cX)$ with \seg}\right\} &\xleftrightarrow{\text{\emph{1:1}}} \frac{\left\{\parbox{17em}{\centering $n$-exangulated structures $(\cD, \GG, \fr)$ with\\ monic $\fr$-inflations and epic $\fr$-deflations}\right\}}{\left\{\parbox{17em}{\centering equivalences of $n$-exanuglated categories of the form $(\Id_{\cD}, \Gamma)$}\right\}} \\
                                              (\cD, \cX) &\xmapsto{\phantom{\text{\emph{1:1}}}} (\cD, \GG_\cX, \fr_{\cX}) \\
                                              (\cD, \cX_{(\GG, \fr)}) &\xmapsfrom{\phantom{\text{\emph{1:1}}}} (\cD, \GG, \fr).
         \end{align*}
      \end{theorem}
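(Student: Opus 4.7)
The theorem asks for a bijection between two sets (well, a set and a class of equivalence classes), and the plan is to verify it by assembling results that have essentially already been proven. My proof would consist of three pieces: well-definedness of both maps (especially on the equivalence classes appearing on the right), and showing the two compositions are the identity.

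First, I would check well-definedness. The forward map $(\cD, \cX) \mapsto (\cD, \GG_\cX, \fr_\cX)$ lands in the correct target by \Cref{prop:exact->exangulated}. For the backward map $(\cD, \GG, \fr) \mapsto (\cD, \cX_{(\GG, \fr)})$, the output is an $n$-exact category by \Cref{prop:exangulated->exact}, and I must verify it has \seg. This follows from the bijection $\Gamma_{(C,A)} \colon \GG(C,A) \to \GG_{\cX_{(\GG,\fr)}}(C,A)$ constructed in \Cref{lem:natural-transformation} (applied in this setting), which exhibits $\GG_{\cX_{(\GG,\fr)}}(C,A)$ as a set. To see the backward map descends to equivalence classes, suppose $(\Id_\cD, \Gamma) \colon (\cD, \GG, \fr) \to (\cD, \GG', \fr')$ is an $n$-exangulated equivalence. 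Then the defining property $[X_\bullet]_\cD = \fr(\delta) \iff [X_\bullet]_\cD = \fr'(\Gamma_{C,A}(\delta))$ together with the bijectivity of $\Gamma$ on extensions immediately yields $\cX_{(\GG, \fr)} = \cX_{(\GG', \fr')}$.

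Next, I would check that the two round trips are the identity. Starting from an $n$-exact category $(\cD, \cX)$ with \seg, applying both maps gives $(\cD, \cX_{(\GG_\cX, \fr_\cX)})$, which equals $(\cD, \cX)$ by \Cref{lem:exactidentity}. Starting from an $n$-exangulated category $(\cD, \GG, \fr)$ with monic $\fr$-inflations and epic $\fr$-deflations, applying both maps gives $(\cD, \GG_{\cX_{(\GG, \fr)}}, \fr_{\cX_{(\GG, \fr)}})$; by \Cref{lem:natural-transformation} the natural transformation $\Gamma$ with components $\Gamma_{(C,A)}(\delta) = \fr(\delta)$ defines an $n$-exangulated equivalence $(\Id_\cD, \Gamma)$ between this category and the original $(\cD, \GG, \fr)$. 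This places the round trip in the correct equivalence class, completing the bijection.

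I expect no serious obstacle here, since the technical content has been absorbed into the earlier lemmas; the proof is essentially a bookkeeping exercise. The one subtle point worth highlighting explicitly is that the strong Obscure Axiom (\Cref{cor:obscure}) is precisely what allows \Cref{prop:exangulated->exact} to apply without an additional weak idempotent completeness hypothesis in the case $n \geq 2$, and without this hypothesis the correspondence would not be available in full generality. Beyond that, the argument is merely to chain together \Cref{prop:exact->exangulated}, \Cref{prop:exangulated->exact}, \Cref{lem:natural-transformation}, and \Cref{lem:exactidentity}.
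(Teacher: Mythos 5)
Your proposal is correct and follows essentially the same route as the paper: well-definedness of the forward map via \Cref{prop:exact->exangulated}, the observation that an $n$-exangulated equivalence $(\Id_\cD,\Gamma)$ preserves the class of conflations so that the backward map descends, and then \Cref{lem:natural-transformation} and \Cref{lem:exactidentity} for the two round trips. The only point you make more explicit than the paper is the verification that $(\cD,\cX_{(\GG,\fr)})$ has \seg, which the paper handles in the setup preceding \Cref{lem:natural-transformation}; this is a matter of bookkeeping, not substance.
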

      \begin{proof}
         The map from left to right is well-defined by \Cref{prop:exact->exangulated}.

         For any $n$-exangulated equivalence $(\Id_\cD, \Gamma) \colon (\cD, \GG, \fr) \to (\cD, \GG', \fr')$ we have a natural isomorphism $\Gamma \colon \GG \Rightarrow \GG'$. 
         Moreover, any $\fr$-realisation of any $\GG$-extension is an $\fr'$-realisation of its image under $\Gamma$ as $(\Id_\cD, \Gamma)$ is an $n$-exangulated functor.
         Hence, the classes of conflations $\cX_{(\GG, \fr)}$ and $\cX_{(\GG', \fr')}$ coincides and the map from right to left is well-defined.

         The \namecref{thm:correspondence} follows now from \Cref{lem:natural-transformation} and \Cref{lem:exactidentity}.
      \end{proof}
      
      \begin{corollary}\label{cor:nexact}
         For any $n$-exangulated category $(\cC, \EE, \fs)$ the following are equivalent.
         \begin{enumerate}
            \item $(\cC, \EE, \fs)$ is $n$-exact.\label{item:nexact}
            \item Every $\fs$-inflation is monic and every $\fs$-deflation is epic.\label{item:monicepic}
         \end{enumerate}
      \end{corollary}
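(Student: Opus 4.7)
The plan is to deduce \Cref{cor:nexact} directly from the results already established, essentially as a corollary of \Cref{thm:correspondence}, without needing to set up anything new.

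For the direction (\ref{item:nexact})$\Rightarrow$(\ref{item:monicepic}), I would unfold the definition: if $(\cC,\EE,\fs)$ is $n$-exact then by definition there is an $n$-exact structure $\cX$ on $\cC$ and an $n$-exangulated equivalence $(\Id_\cC, \Gamma) \colon (\cC, \EE, \fs) \to (\cC, \GG_\cX, \fr_\cX)$. By \Cref{prop:exact->exangulated}, every $\fr_\cX$-inflation is monic and every $\fr_\cX$-deflation is epic. Since $\Gamma$ is a natural isomorphism and the underlying functor is $\Id_\cC$, the $\fs$-conflations are precisely the $\fr_\cX$-conflations (as noted in the proof of \Cref{thm:correspondence}, since an $n$-exangulated equivalence of the form $(\Id_\cD, \Gamma)$ preserves the class of conflations); in particular, $\fs$-inflations coincide with $\fr_\cX$-inflations and $\fs$-deflations with $\fr_\cX$-deflations, so the required monicity and epicity transfer.

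For the direction (\ref{item:monicepic})$\Rightarrow$(\ref{item:nexact}), I would apply \Cref{prop:exangulated->exact} to produce the $n$-exact category $(\cC, \cX_{(\EE,\fs)})$, noting (as observed just before \Cref{lem:natural-transformation}) that the bijection $\Gamma_{(C,A)}\colon \EE(C,A)\to \GG_{\cX_{(\EE,\fs)}}(C,A)$ furnished by \cite[Lemma 4.36(3)]{HLN21} guarantees that $(\cC, \cX_{(\EE,\fs)})$ has small extension groups. Then \Cref{lem:natural-transformation} supplies an $n$-exangulated equivalence
\[(\Id_\cC, \Gamma) \colon (\cC, \EE, \fs) \to (\cC, \GG_{\cX_{(\EE,\fs)}}, \fr_{\cX_{(\EE,\fs)}}),\]
so $(\cC, \EE, \fs)$ is $n$-exact by definition.

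There is no serious obstacle here, since all the heavy lifting has been done in \Cref{prop:exact->exangulated}, \Cref{prop:exangulated->exact} and \Cref{lem:natural-transformation} (the last of which ultimately depends on the Strong Obscure Axiom of \Cref{cor:obscure} when $n \geq 2$). The only subtle point to articulate carefully is that an $n$-exangulated equivalence with underlying functor $\Id_\cC$ necessarily identifies the classes of inflations and deflations on both sides, which is the bridge that lets the monic/epic condition pass through the equivalence in both directions.
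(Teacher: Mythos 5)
Your proposal is correct and follows essentially the same route the paper intends: the corollary is stated without proof precisely because it is the content of \Cref{prop:exact->exangulated}, \Cref{prop:exangulated->exact} and \Cref{lem:natural-transformation} as assembled in \Cref{thm:correspondence}. Your added remark that an $n$-exangulated equivalence of the form $(\Id_\cC,\Gamma)$ identifies the classes of conflations (hence of inflations and deflations) is exactly the observation made in the proof of \Cref{thm:correspondence}, so nothing is missing.
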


      \begin{definition}\label{def:n-exactextclosed}
         An additive subcategory $\cB \subset \cD$ of an $n$-exact category $(\cD, \cX)$ is called \emph{$n$-extension closed} if for all $X_\bullet \in \cX$ with $X_0, X_{n+1} \in \cB$ there exists a $Y_\bullet \in \C^{n+2}_{\cB} \cap \cX$ with $[X_\bullet]_{\cD} = [Y_\bullet]_{\cD}$.
      \end{definition}
 
       The two notions of $n$-extension closed given in \Cref{def:exangulatedextclosed,def:n-exactextclosed} coincide.
      \begin{lemma}\label{lem:extclosedcoincide1}
         Suppose $(\cD, \cX)$ is an $n$-exact category with an additive subcategory $\cB \subset \cD$.
         Then $\cB$ is $n$-extension closed in $(\cD, \cX)$ if and only if it is $n$-extension closed in $(\cD, \GG_\cX, \fr_\cX)$.
      \end{lemma}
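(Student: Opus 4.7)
The plan is to unwind both definitions and show they express the same property via the bookkeeping relating $\EE$-extensions and homotopy classes of $n$-exact sequences set up in \Cref{def:exact->exangulated}. Recall that by construction $\GG_\cX(C,A) = \{[X_\bullet]_{\cD} \in \Lambda^{n+2}_{(\cD;A,C)} : X_\bullet \in \cX\}$, and $\fr_\cX(\delta) = \delta$ in the sense that for any $\delta = [X_\bullet]_{\cD}$ with $X_\bullet \in \cX$, a complex $Y_\bullet \in \C^{n+2}_{(\cD;A,C)}$ is an $\fr_\cX$-realisation of $\delta$ if and only if $[Y_\bullet]_{\cD} = [X_\bullet]_{\cD}$.

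For the forward direction, I assume $\cB$ is $n$-extension closed in $(\cD,\cX)$. Given $A,C \in \cB$ and $\delta \in \GG_\cX(C,A)$, choose any representative $X_\bullet \in \cX \cap \C^{n+2}_{(\cD;A,C)}$ with $[X_\bullet]_{\cD} = \delta$. Since $X_0 = A, X_{n+1} = C \in \cB$, $n$-extension closedness in $(\cD,\cX)$ provides $Y_\bullet \in \C^{n+2}_{\cB} \cap \cX$ with $[Y_\bullet]_{\cD} = [X_\bullet]_{\cD} = \delta$. Then $\langle Y_\bullet, \delta \rangle$ is an $\fr_\cX$-distinguished $n$-exangle with every term in $\cB$, as required.

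For the reverse direction, I assume $\cB$ is $n$-extension closed in $(\cD,\GG_\cX,\fr_\cX)$. Given $X_\bullet \in \cX$ with $A \coloneqq X_0, C \coloneqq X_{n+1} \in \cB$, set $\delta \coloneqq [X_\bullet]_{\cD} \in \GG_\cX(C,A)$. By hypothesis there is an $\fr_\cX$-distinguished $n$-exangle $\langle Y_\bullet, \delta \rangle$ with $Y_i \in \cB$ for $i = 0, \dots, n+1$. By the definition of $\fr_\cX$-realisation, $[Y_\bullet]_{\cD} = \fr_\cX(\delta) = [X_\bullet]_{\cD}$, so $Y_\bullet$ and $X_\bullet$ are homotopy equivalent in $\C^{n+2}_{(\cD;A,C)}$. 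The only nontrivial point is verifying $Y_\bullet \in \cX$; this is exactly the argument used in the proof of \Cref{lem:exactidentity}, namely that a homotopy equivalence in $\C^{n+2}_{(\cD;A,C)}$ between an element of $\cX$ and another complex gives an equivalence of $n$-exact sequences in the sense of \cite[Definition 2.9]{Jas16}, and $\cX$ is closed under weak isomorphism by \cite[Definition 4.2]{Jas16}. Hence $Y_\bullet \in \C^{n+2}_{\cB} \cap \cX$ with $[Y_\bullet]_{\cD} = [X_\bullet]_{\cD}$, as desired.

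The only subtlety, and essentially the one step that is not a pure definition chase, is propagating membership in $\cX$ across homotopy equivalence in the reverse direction; this is handled by reusing the weak-isomorphism closure argument already invoked in \Cref{lem:exactidentity}. Everything else is just a translation between the two parallel definitions.
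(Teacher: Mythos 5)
Your proof is correct and follows essentially the same definition-unwinding route as the paper, which writes out only the forward direction (via $\cX = \cX_{(\GG_\cX,\fr_\cX)}$ from \Cref{thm:correspondence}) and leaves the converse as "similar". You correctly identify the one substantive point in the converse — that membership in $\cX$ passes across homotopy equivalence by closure under weak isomorphisms, exactly as in \Cref{lem:exactidentity}.
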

      \begin{proof}
          We show only that if $\cB$ is $n$-extension closed in $(\cD, \cX)$ in the sense of \Cref{def:n-exactextclosed} then $\cB$ is $n$-extension closed in $(\cD, \GG_\cX, \fr_\cX)$ in the sense of \Cref{def:exangulatedextclosed}, the reverse statement follows similarly.
          Let $\delta \in \GG_\cX(C,A)$ for $C,A \in \cB$.
          We have $\cX = \cX_{(\GG_\cX, \fr_\cX)}$ by \Cref{thm:correspondence} and hence $X_\bullet \in \cX$ for any $X_\bullet \in \C^{n+2}_{\cD}$ with $[X_\bullet] = \fr_\cX(\delta)$.
          By \Cref{def:n-exactextclosed} we can pick $Y_\bullet \in \C^{n+2}_{\cB}$ with $[Y_\bullet] = [X_\bullet] = \fs(\delta)$.
      \end{proof}
%      \begin{remark}
%         Suppose $(\cD, \cX)$ is an $n$-exact category.
%         For $A,C \in \cD$ and $X_\bullet, Y_\bullet \in \C^{n+2}_{(\cD;A,C)}$ the following are equivalent.
%         \begin{enumerate}
%            \item $[X_\bullet]_{\cD} = [Y_\bullet]_{\cD}$.
%            \item There exists a $k \in \NN$, $n$-exact sequences $(W_1)_\bullet, \dots, (W_k)_\bullet \in \cC^{n+2}_{(\cD; A,C)}$ and equivalences 
%            \[X_\bullet \xrightarrow{(f_0)_\bullet} (W_1)_\bullet \xleftarrow{(f_1)_\bullet} (W_2)_\bullet \xrightarrow{(f_2)_\bullet} \cdots \xrightarrow{(f_{k-1})_\bullet} (W_k)_\bullet \xleftarrow{(f_k)_\bullet} Y_\bullet\]
%            of $n$-exact sequences.
%         \end{enumerate}
%         This is because by \cite[Definition 4.2]{Jas16} the class $\cX$ is closed under weak isomorphisms (in the sense of \cite[Definition 4.1]{Jas16}) and equivalence of $n$-exact sequences induced homotopy equivalences in $\C^{n+2}_{(\cD;A,C)}$ by \cite[Proposition 4.10]{Jas16}.
%      \end{remark}

       We have the following corollary which is a higher analogue of \cite[Lemma 10.20]{Bueh10}.   
      \begin{corollary}\label{cor:extensionclosed2}
         Suppose that $(\cD, \cX)$ is an $n$-exact category with \seg and $\cB \subset \cD$ is an $n$-extension closed additive subcategory.
         Then $(\cB, \cX_\cB)$ is an $n$-exact category with \seg, where $\cX_\cB \coloneqq \cX \cap \C^{n+2}_{\cB}$.
      \end{corollary}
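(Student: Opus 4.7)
The plan is to transfer the problem to the $n$-exangulated setting where \Cref{thm:extensionclosed} applies, and then use the correspondence of \Cref{thm:correspondence} to come back to the $n$-exact setting. Throughout I work with the $n$-exangulated category $(\cD, \GG_\cX, \fr_\cX)$ produced from $(\cD, \cX)$ by \Cref{prop:exact->exangulated}, which by construction has monic $\fr_\cX$-inflations and epic $\fr_\cX$-deflations.

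First I would invoke \Cref{lem:extclosedcoincide1} to see that $\cB$ is $n$-extension closed in the $n$-exangulated sense inside $(\cD, \GG_\cX, \fr_\cX)$. Then \Cref{thm:extensionclosed} yields an inherited $n$-exangulated structure $(\cB, \FF, \ft)$ on $\cB$ in the sense of \Cref{def:induced}, with $\FF(C,A) = \GG_\cX(C,A)$ for $A,C \in \cB$. The key next step is to argue that $\ft$-inflations are monic and $\ft$-deflations are epic: by \Cref{rem:theremark} every $\ft$-conflation is an $\fr_\cX$-conflation, and $\fr_\cX$-inflations are monic in $\cD$, hence monic when viewed in the full subcategory $\cB$; dually for deflations. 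With this in hand, \Cref{cor:nexact} lets me conclude that $(\cB, \FF, \ft)$ is an $n$-exact $n$-exangulated category, and \Cref{thm:correspondence} then produces an $n$-exact structure $\cX_{(\FF,\ft)}$ on $\cB$.

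To finish, I would identify $\cX_{(\FF,\ft)}$ with $\cX_\cB$: a complex $X_\bullet \in \C^{n+2}_\cB$ is an $\ft$-conflation iff (by \Cref{rem:theremark}) it is an $\fr_\cX$-conflation, iff (by \Cref{lem:exactidentity}) it belongs to $\cX$; combined with the constraint $X_\bullet \in \C^{n+2}_\cB$ this is exactly membership in $\cX_\cB$. The small extension groups condition for $(\cB, \cX_\cB)$ is then immediate because $\GG_{\cX_\cB}(C,A) = \FF(C,A) = \GG_\cX(C,A)$, which is a set by hypothesis on $(\cD, \cX)$.

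No step presents a serious obstacle, since the heavy lifting is already accomplished by \Cref{thm:extensionclosed} (which itself rests on the strong Obscure Axiom \Cref{cor:obscure}) and by the correspondence \Cref{thm:correspondence}. The only place where one must take some care is the verification that $\ft$-inflations and $\ft$-deflations remain monic, respectively epic, in the subcategory $\cB$; this is where fullness of $\cB \subset \cD$ is essential, but it is straightforward.
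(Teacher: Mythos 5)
Your proposal is correct and follows essentially the same route as the paper: transfer via \Cref{lem:extclosedcoincide1} to the $n$-exangulated setting, apply \Cref{thm:extensionclosed}, check via \Cref{rem:theremark} that the inherited inflations and deflations are monic and epic, and return through \Cref{cor:nexact} and \Cref{thm:correspondence}, identifying the resulting structure with $\cX_\cB$ using $\cX = \cX_{(\GG_\cX,\fr_\cX)}$. The only cosmetic point is that $\GG_{\cX_\cB}(C,A)$ is in bijection with (rather than literally equal to) $\GG_\cX(C,A)$, which still gives \seg.
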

      \begin{proof}
         By \Cref{lem:extclosedcoincide1} we know that $\cB$ is $n$-extension closed in $(\cD, \GG_\cX, \fr_\cX)$.
         \Cref{thm:extensionclosed} $(\cD, \GG_\cX, \fr_\cX)$ induces an $n$-exangulated structure $(\cB, \FF_\cB, \ft_\cB)$ on $\cB$.
         By \Cref{thm:correspondence}, any $\fr_\cX$-inflation is monic in $\cD$.
         The $\ft_\cB$-conflations are precisely the $\fr_\cX$-conflations with terms in $\cB$, see \Cref{rem:theremark}.
         Therefore, any $\ft_\cB$-inflation is monic in $\cD$ and hence in $\cB \subset \cD$.
         Dually, any $\ft_\cB$-deflation is epic in $\cB$.
         Hence, $(\cB, \FF_\cB, \ft_\cB)$ is $n$-exact, by \Cref{cor:nexact}.

         It follows from \Cref{thm:correspondence} that $(\cB, \FF_\cB, \ft_\cB)$ induces an $n$-exact structure $\cX_{(\FF_\cB, \ft_\cB)}$ with \seg on $\cB$.
         \Cref{rem:theremark} and $\cX = \cX_{(\GG_\cX, \fr_\cX)}$ imply $\cX_{\cB} = \cX_{(\FF_\cB, \ft_\cB)} $.
      \end{proof}

%     \begin{remark}
%         \[\begin{tikzcd}[column sep = large]
%            {(\cD,\cX)} && {(\cD,\GG_{\cX},\fr_{\cX})} \\
%            {(\cB, \cX_\cB)} && {(\cB,\FF_{\cB},\ft_{\cB})}
%            \arrow["\text{{\Cref{thm:correspondence}}}", from=1-1, to=1-3]
%            \arrow["\text{{\Cref{thm:correspondence}}}", leftarrow, from=2-1, to=2-3]
%            \arrow["\text{{\Cref{thm:extensionclosed}}}", from=1-3, to=2-3]
%            \arrow["\text{{\Cref{cor:extensionclosed2}}}"', dashed, from=1-1, to=2-1]
%         \end{tikzcd}\]
%     \end{remark}

      \section{\texorpdfstring{$n$}{n}-extension closed subcategories of \texorpdfstring{$(n+2)$}{(n+2)}-angulated categories}\label{sec:nang}
       Throughout this section let $\cD$ be an additive category, $\Sigma \colon \cD \to \cD$ be an additive automorphism of $\cD$ and $\GG_{\Sigma}(-, -) \coloneqq \cD(-, \Sigma-)$ be the induced biadditive bifunctor, see \cite[Section 4.2]{HLN21}.
      We recall the following constructions from \cite[Section 4.2]{HLN21}. 

      Suppose $(\cD, \Sigma, \text{\pentagon})$ is an $(n+2)$-angulated category in the sense of \cite{GKO13}.
      Define a realisation $\fr_{\text{\pentagon}}$ of $\GG_\Sigma$ as follows.
      For $C,A \in \cD$ and $\delta \in \GG_\Sigma(C, A)$ pick an $(n+2)$-angle
      \begin{align}\widehat{X}_\bullet \colon& \hspace{1cm}\label{eqn:nangle}\begin{tikzcd}[ampersand replacement=\&]
        A \& {X_1} \& \cdots \& {X_n} \& C \& {\Sigma A}
        \arrow["\delta", from=1-5, to=1-6]
        \arrow["{d^X_0}", from=1-1, to=1-2]
        \arrow["{d^X_1}", from=1-2, to=1-3]
        \arrow["{d^X_{n-1}}", from=1-3, to=1-4]
        \arrow["{d^X_n}", from=1-4, to=1-5]
      \end{tikzcd}
      \intertext{in $\text{\pentagon}$. Let $X_\bullet \in \C^{n+2}_{\cD}$ be the truncated complex}
      X_\bullet \colon& \hspace{1cm}\label{eqn:shortnangle}\begin{tikzcd}[ampersand replacement=\&]
        A \& {X_1} \& \cdots \& {X_n} \& C
        \arrow["{d^X_0}", from=1-1, to=1-2]
        \arrow["{d^X_1}", from=1-2, to=1-3]
        \arrow["{d^X_{n-1}}", from=1-3, to=1-4]
        \arrow["{d^X_n}", from=1-4, to=1-5]
      \end{tikzcd}\end{align}
      and define $\fr_{\text{\pentagon}}(\delta) \coloneqq [X_\bullet]_{\cD}$.
      This is independent of the $(n+2)$-angle chosen in (\ref{eqn:nangle}), by \cite[Lemma 4.4]{HLN21}.
      Then $(\cD, \GG_\Sigma, \fr_{\text{\pentagon}})$ is $n$-exangulated, see \cite[Proposition 4.5]{HLN21}

      Conversely, let $(\cD, \GG_\Sigma, \fr)$ be $n$-exangulated.
      Let $\text{\pentagon}_{\fr}$ be the class of all complexes $\widehat{X}_\bullet$ as in (\ref{eqn:nangle}) such that $\langle X_\bullet, \delta \rangle$ is $\fr$-distinguished, where $X_\bullet$ is the corresponding complex in (\ref{eqn:shortnangle}), then $(\cD, \Sigma, \text{\pentagon}_{\fr})$ is $(n+2)$-angulated, see \cite[Proposition 4.8]{HLN21}.

      Indeed this gives us a bijective correspondence.
      \begin{theorem}[{\cite[Section 4.2]{HLN21}}]\label{prop:correspondence2}
         There is a one-to-one correspondence
         \begin{align*}
            \left\{\parbox{16.2em}{\centering $(n+2)$-angulated structures $(\cD, \Sigma, \text{\emph{\pentagon}})$} \right\} &\xleftrightarrow{\text{\emph{1:1}}} \left\{\parbox{15em}{\centering $n$-exangulated structures $(\cD, \GG_{\Sigma}, \fr)$}\right\} \\
                                                                                 (\cD, \Sigma, \text{\emph{\pentagon}}) &\xmapsto{\phantom{\text{\emph{1:1}}}} (\cD, \GG_\Sigma, \fr_\text{\emph{\pentagon}}) \\
                                                                    (\cD, \Sigma, \text{\emph{\pentagon}}_{\fr}) &\xmapsfrom{\phantom{\text{\emph{1:1}}}} (\cD, \GG_{\Sigma}, \fr).
        \end{align*}
      
      \end{theorem}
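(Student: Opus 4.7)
The plan is to verify the two assignments above are well-defined and mutually inverse; well-definedness is already covered by \cite[Proposition 4.5]{HLN21} and \cite[Proposition 4.8]{HLN21} respectively, so I focus on the two identities $\fr = \fr_{\text{\pentagon}_{\fr}}$ and $\text{\pentagon} = \text{\pentagon}_{\fr_{\text{\pentagon}}}$. The first is essentially formal: for $\delta \in \GG_{\Sigma}(C, A)$, any $\fr$-realisation $X_\bullet$ of $\delta$ makes $\langle X_\bullet, \delta \rangle$ an $\fr$-distinguished $n$-exangle, so by the very definition of $\text{\pentagon}_{\fr}$ the completion $\widehat{X}_\bullet$ lies in $\text{\pentagon}_{\fr}$, and the recipe computing $\fr_{\text{\pentagon}_{\fr}}(\delta)$ returns $[X_\bullet]_{\cD} = \fr(\delta)$.

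For $\text{\pentagon} = \text{\pentagon}_{\fr_{\text{\pentagon}}}$, the inclusion $\text{\pentagon} \subseteq \text{\pentagon}_{\fr_{\text{\pentagon}}}$ is immediate: any $\widehat{X}_\bullet \in \text{\pentagon}$ with last morphism $\delta$ satisfies $\fr_{\text{\pentagon}}(\delta) = [X_\bullet]_{\cD}$ by construction, which makes $\langle X_\bullet, \delta \rangle$ an $\fr_{\text{\pentagon}}$-distinguished $n$-exangle. For the reverse inclusion, take $\widehat{Y}_\bullet \in \text{\pentagon}_{\fr_{\text{\pentagon}}}$ with connecting morphism $\delta$, and choose some $\widehat{X}_\bullet \in \text{\pentagon}$ with the same $\delta$, supplied by the existence axiom for $(n+2)$-angulated categories. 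The equality $[Y_\bullet]_{\cD} = \fr_{\text{\pentagon}}(\delta) = [X_\bullet]_{\cD}$ yields a homotopy equivalence $f_\bullet \colon X_\bullet \to Y_\bullet$ in $\C^{n+2}_{(\cD;A,C)}$, which extends by $\id_{\Sigma A}$ to a morphism $\widehat{f}_\bullet \colon \widehat{X}_\bullet \to \widehat{Y}_\bullet$ of pre-$(n+2)$-angles that is the identity at $A$, $C$, and $\Sigma A$.

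The main obstacle is the final step: promoting $\widehat{f}_\bullet$ to a genuine isomorphism of pre-$(n+2)$-angles, after which closure of $\text{\pentagon}$ under isomorphism of complexes forces $\widehat{Y}_\bullet \in \text{\pentagon}$. I would handle this via the $(n+2)$-angulated analogue of the $5$-lemma: applying $\cD(-, Y_i)$ and $\cD(X_i, -)$ to the long Hom exact sequences associated with the $(n+2)$-angle $\widehat{X}_\bullet$, and using that $\widehat{f}_\bullet$ is the identity at the three end positions, an inductive diagram chase forces each intermediate component of $\widehat{f}_\bullet$ to be an isomorphism. This is essentially the mechanism underlying \cite[Lemma 4.4]{HLN21}, which I would either cite or adapt directly.
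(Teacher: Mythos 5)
Your skeleton matches the paper's proof: well-definedness is delegated to \cite[Propositions 4.5 and 4.8]{HLN21}, the identity $\fr = \fr_{\text{\pentagon}_{\fr}}$ follows by unwinding definitions together with \cite[Lemma 4.4]{HLN21}, and the inclusion $\text{\pentagon} \subseteq \text{\pentagon}_{\fr_{\text{\pentagon}}}$ is immediate. The gap is in the step you yourself flag as the main obstacle. You propose to upgrade the comparison morphism $\widehat{f}_\bullet$ (identity at $A$, $C$, $\Sigma A$, homotopy-equivalence data in between) to an isomorphism of pre-$(n+2)$-angles via a five-lemma chase and then use closure of $\text{\pentagon}$ under isomorphism. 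This cannot work, because the intermediate claim is false: for $n \geq 2$, if $\widehat{X}_\bullet \in \text{\pentagon}$ realises $\delta$, then the completion of $X_\bullet \oplus \triv_1(W)_\bullet$ by the same $\delta$ also lies in $\text{\pentagon}_{\fr_{\text{\pentagon}}}$ (the summand $\triv_1(W)_\bullet$ sits in internal degrees and is contractible relative to the end terms, so the class in $\C^{n+2}_{(\cD;A,C)}$ is unchanged), yet its middle terms $X_1 \oplus W$ and $X_2 \oplus W$ are in general not isomorphic to $X_1$ and $X_2$. Hence no diagram chase can force the inner components of a homotopy equivalence between two realisations of the same $\delta$ to be isomorphisms, and such padded candidates do not belong to $\text{\pentagon}$ \emph{because they are isomorphic to} $\widehat{X}_\bullet$ --- they are not. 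Even the mechanics of the chase are unavailable: the five-lemma requires exactness of both rows and isomorphisms flanking each unknown vertical map, whereas only one row is known a priori to be an $(n+2)$-angle and the unknown components $f_1, \dots, f_n$ are consecutive. Your appeal to the ``mechanism underlying \cite[Lemma 4.4]{HLN21}'' also points the wrong way: that lemma \emph{produces} the homotopy equivalence between truncations of two angles over the same $\delta$; it neither upgrades it to an isomorphism nor transfers membership in $\text{\pentagon}$.

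The correct tool --- and the one the paper invokes at exactly this point --- is \cite[Lemma 2.4]{GKO13}, which transfers membership in $\text{\pentagon}$ along a comparison morphism that is an isomorphism at two consecutive positions of the $\Sigma$-sequence (here the identities at $C$ and $\Sigma A$), with no requirement that the inner components be isomorphisms. Replacing your final step by an appeal to that lemma, with its remaining hypotheses supplied by the homotopy equivalence of the truncations, repairs the argument and recovers the paper's proof.
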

      \begin{proof}
         By \cite[Proposition 4.5]{HLN21}, every $(n+2)$-angulated structure $(\cD, \Sigma, \text{\pentagon})$ yields an $n$-exangulated structure $(\cD, \GG_\Sigma, \fr_{\text{\pentagon}})$.
         Conversely, by \cite[Propsition 4.8]{HLN21}, every $n$-exangulated structure $(\cD, \GG_\Sigma, \fr)$ yields an $(n+2)$-angulated structure $(\cD, \Sigma, \text{\pentagon}_{\fr})$.
         We only need to show $\text{\pentagon} = \text{\pentagon}_{\fr_{\text{\pentagon}}}$ for any $(n+2)$-angulated structure $(\cD, \Sigma, \text{\pentagon}_{\fr})$ and $\fr = \fr_{\text{\pentagon}_{\fr}}$ for any $n$-exangulated structure $(\cD, \GG_\Sigma, \fr)$.

         Let $(\cD, \GG_\Sigma, \fr)$ be $n$-exangulated, $A,C \in \cD$, $\delta \in \GG_{\Sigma}(C, A)$ and $\langle X_\bullet, \delta \rangle$ be $\fr$-distinguished.
         Then $X_\bullet$ is of the shape of (\ref{eqn:shortnangle}) and hence $\widehat{X}_\bullet$ as in (\ref{eqn:nangle}) is in $\text{\pentagon}_{\fr}$.
         Using the independence \cite[Lemma 4.4]{HLN21} provides, we have that $\langle X_\bullet, \delta \rangle$ is $\fr_{\text{\pentagon}_{\fr}}$-distinguished.
         Therefore, $\fr = \fr_{\text{\pentagon}_{\fr}}$.

         For the rest of this proof denote for any complex $\widehat{X}_\bullet$ as in (\ref{eqn:nangle}) the corresponding complex as in (\ref{eqn:shortnangle}) by $X_\bullet$.

         Let $(\cD, \Sigma, \text{\pentagon}_{\fr})$ be $(n+2)$-angulated.
         We show the two inclusions of $\text{\pentagon} = \text{\pentagon}_{\fr_{\text{\pentagon}}}$ separately. 
         Let $\widehat{X}_\bullet \in \text{\pentagon}$ be as in (\ref{eqn:nangle}).
         Then $[X_\bullet]_{\cD} = \fr_{\text{\pentagon}}(\delta)$, using \cite[Lemma 4.4]{HLN21}.
         Hence, $\widehat{X}_\bullet \in \text{\pentagon}_{\fr_{\text{\pentagon}}}$.
         Conversely, let $\widehat{X}_\bullet \in \text{\pentagon}_{\fr_{\text{\pentagon}}}$ be as in (\ref{eqn:nangle}).
         Then $\langle X_\bullet, \delta \rangle$ is $\fr_{\text{\pentagon}}$-distinguished, by definition.
         This means that there is a $\widehat{Y}_\bullet \in \text{\pentagon}$ of the shape
         \begin{align*}\widehat{Y}_\bullet \colon \hspace{1cm}&\begin{tikzcd}[ampersand replacement=\&]
           A \& {Y_1} \& \cdots \& {Y_n} \& C \& {\Sigma A}
            \arrow["\delta", from=1-5, to=1-6]
            \arrow["{d^Y_0}", from=1-1, to=1-2]
            \arrow["{d^Y_1}", from=1-2, to=1-3]
            \arrow["{d^Y_{n-1}}", from=1-3, to=1-4]
            \arrow["{d^Y_n}", from=1-4, to=1-5]
         \end{tikzcd}         
         \intertext{such that $[Y_\bullet]_{\cD} = [X_\bullet]_{\cD}$. Hence, there is a commutative diagram}
         &\begin{tikzcd}[ampersand replacement=\&]
           A \& {Y_1} \& \cdots \& {Y_n} \& C \& {\Sigma A}\\
           A \& {X_1} \& \cdots \& {X_n} \& C \& {\Sigma A}
            \arrow["\delta", from=1-5, to=1-6]
            \arrow["{d^Y_0}", from=1-1, to=1-2]
            \arrow["{d^Y_1}", from=1-2, to=1-3]
            \arrow["{d^Y_{n-1}}", from=1-3, to=1-4]
            \arrow["{d^Y_n}", from=1-4, to=1-5]
            \arrow["\delta", from=2-5, to=2-6]
            \arrow["{d^X_0}", from=2-1, to=2-2]
            \arrow["{d^X_1}", from=2-2, to=2-3]
            \arrow["{d^X_{n-1}}", from=2-3, to=2-4]
            \arrow["{d^X_n}", from=2-4, to=2-5]
            \arrow[from=1-1, to=2-1, equal]
            \arrow[from=1-2, to=2-2, dotted]
            \arrow[from=1-4, to=2-4, dotted]
            \arrow[from=1-5, to=2-5, equal]
            \arrow[from=1-6, to=2-6, equal]
         \end{tikzcd}         
         \end{align*}
         where the dotted morphisms are obtained through the homotopy equivalence $[X_\bullet]_{\cD} = [Y_\bullet]_{\cD}$.
         By \cite[Lemma 2.4]{GKO13} we have $\widehat{X}_\bullet \in \text{\pentagon}$.
         This shows $\text{\pentagon} = \text{\pentagon}_{\fr_{\text{\pentagon}}}$
      \end{proof}
      
       We recall the following definition.
      \begin{definition}\label{def:nangulatedextclosed}
         An additive subcategory $\cB \subset \cD$ of an $(n+2)$-angulated category $(\cD, \Sigma, \text{\pentagon})$ is called \emph{$n$-extension closed} if for all $A,C \in \cB$ and all $\delta \in \cD(C, \Sigma A)$ there is an $(n+2)$-angle $\widehat{X}_\bullet$ as in (\ref{eqn:nangle}) with $X_1, \dots, X_n \in \cB$. 
      \end{definition}

      \begin{remark}\label{rem:extclosed=extclosed}
         Suppose $(\cD, \Sigma, \pentagon)$ is an $(n+2)$-angulated category with an additive subcategory $\cB \subset \cD$.
         Then $\cB$ is $n$-extension closed in $(\cD, \Sigma, \pentagon)$ in the sense of \Cref{def:nangulatedextclosed} if and only if it is $n$-extension closed in $(\cD, \GG_\Sigma, \fr_{\pentagon})$ in the sense of \Cref{def:exangulatedextclosed}.
      \end{remark}

      Suppose $(\cD, \Sigma, \text{\pentagon})$ is an $(n+2)$-angulated category and $\cB \subset \cD$ is $n$-extension closed.
      For each $A,C \in \cB$ and $\delta \in \cD(C, \Sigma A)$ pick an $(n+2)$-angle $\widehat{X}_\bullet$ as in (\ref{eqn:nangle}) with $X_1, \dots, X_n \in \cB$ and define $\fr_{\cB}(\delta) = [X_\bullet]_{\cB}$, where $X_\bullet$ is the corresponding complex from (\ref{eqn:shortnangle}).
      This is well-defined using that for $A,C \in \cB$ and $\smash{X_\bullet, Y_\bullet \in \cC^{n+2}_{(\cB;A,C)}}$ the equality $[X_\bullet]_\cD = [Y_\bullet]_\cD$ implies $[X_\bullet]_\cB = [Y_\bullet]_\cB$ and using that $[X_\bullet]_\cD$ is independent of the choice of $\widehat{X}_\bullet \in \text{\pentagon}$ completing $\delta \colon C \to \Sigma A$ by \cite[Lemma 4.4]{HLN21}.
      The following corollary proves \cite[Theorem 1.2]{Zho22} in a more general setting.

      \begin{corollary}\label{cor:thefirstcorollary} 
         Suppose that $(\cD, \Sigma, \emph{\pentagon})$ is an $(n+2)$-angulated category and $\cB \subset \cD$ is an $n$-extension closed additive subcategory.
         Then $(\cB, \GG_{\cB}, \fr_{\cB})$ is an $n$-exangulated category, where $\GG_{\cB} = \GG_\Sigma|_{\scriptscriptstyle \cB^{\op} \times \cB}$ and $\fr_{\cB}$ is as defined above.
      \end{corollary}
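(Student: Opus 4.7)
The plan is to deduce this from \Cref{thm:extensionclosed} applied to the $n$-exangulated avatar of $(\cD, \Sigma, \pentagon)$. First, by \Cref{prop:correspondence2} the triple $(\cD, \GG_\Sigma, \fr_{\pentagon})$ is an $n$-exangulated category, and by \Cref{rem:extclosed=extclosed} the additive subcategory $\cB \subset \cD$ is $n$-extension closed in this $n$-exangulated structure in the sense of \Cref{def:exangulatedextclosed}. Hence the hypotheses of \Cref{stp:thesetup} are satisfied (with $\cC = \cD$, $\cA = \cB$, $\EE = \GG_\Sigma$, $\fs = \fr_{\pentagon}$), and \Cref{thm:extensionclosed} yields an induced $n$-exangulated structure $(\cB, \FF, \ft)$, where $\FF = \GG_\Sigma(\sJ_\cB -, \sJ_\cB -)$ and $\ft$ is as in \Cref{def:induced}.

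It only remains to match $(\cB, \FF, \ft)$ with $(\cB, \GG_\cB, \fr_\cB)$ as defined before the statement. The identification $\FF = \GG_\cB$ is immediate: both are literally the restriction of the bifunctor $\cD(-, \Sigma -)$ to $\cB^{\op} \times \cB$, and the canonical natural isomorphism $\Theta_\cB$ of \Cref{def:induced} is the identity on each group of extensions.

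For the equality $\ft = \fr_\cB$, fix $A, C \in \cB$ and $\delta \in \GG_\cB(C, A) = \cD(C, \Sigma A)$. By \Cref{def:induced}, $\ft(\delta) = [X_\bullet]_\cB$ for any $\fr_{\pentagon}$-distinguished $n$-exangle $\langle X_\bullet, \delta \rangle$ with $X_0, \dots, X_{n+1} \in \cB$. Unwinding the construction of $\fr_{\pentagon}$ recalled before \Cref{prop:correspondence2}, such an $X_\bullet$ is precisely the truncation, as in (\ref{eqn:shortnangle}), of an $(n+2)$-angle $\widehat{X}_\bullet$ as in (\ref{eqn:nangle}) with $X_1, \dots, X_n \in \cB$; and by the $n$-extension closedness of $\cB$ in $(\cD, \Sigma, \pentagon)$ such an $(n+2)$-angle exists. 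This is exactly the recipe defining $\fr_\cB(\delta)$, so $\ft(\delta) = \fr_\cB(\delta)$, and the triple $(\cB, \GG_\cB, \fr_\cB)$ inherits the $n$-exangulated structure $(\cB, \FF, \ft)$. I do not anticipate any real obstacle here: all the nontrivial work — in particular the use of the strong Obscure Axiom that powers \Cref{thm:extensionclosed}, and the bijection in \Cref{prop:correspondence2} — has already been carried out, so what remains is merely matching definitions.
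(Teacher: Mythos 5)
Your proposal is correct and follows exactly the route the paper takes: pass to the $n$-exangulated structure $(\cD, \GG_\Sigma, \fr_{\pentagon})$ via \Cref{prop:correspondence2}, use \Cref{rem:extclosed=extclosed} to see $\cB$ is $n$-extension closed there, apply \Cref{thm:extensionclosed}, and identify the resulting structure with $(\cB, \GG_\cB, \fr_\cB)$. The only difference is that you spell out the final identification $\ft = \fr_\cB$, which the paper dismisses as clear.
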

      \begin{proof}
         By \Cref{prop:correspondence2} there is an $n$-exangulated structure $(\cD, \GG_{\Sigma}, \fr)$ on $\cD$ with $\text{\pentagon} = \text{\pentagon}_\fr$.
         By \Cref{rem:extclosed=extclosed} we know that $\cB \subset \cD$ is $n$-extension closed in $(\cD, \GG_{\Sigma},\fr)$.
         By \Cref{thm:extensionclosed} there is an $n$-exangulated structure $(\cB, \FF_{\cB}, \ft_{\cB})$ on $\cB$, where $\FF_{\cB} = \GG_\Sigma|_{\scriptscriptstyle \cB^{\op} \times \cB}$.
         It is clear that $\ft_{\cB}$ and $\fr_{\cB}$ coincide.
      \end{proof}

      Suppose $(\cD, \Sigma, \text{\pentagon})$ is an $(n+2)$-angulated category and $\cB \subset \cD$ is an $n$-extension closed additive subcategory with $\cD(\Sigma \cB, \cB) = 0$.
      Let $\cX_\cB$ be the class of all sequences $X_\bullet$ as in (\ref{eqn:shortnangle}) with $A, X_1, \dots, X_n, C \in \cB$ such that there exists a corresponding $(n+2)$-angle $\widehat{X}_\bullet$ as in (\ref{eqn:nangle}).
      The following corollary proves \cite[Theorem I]{Kla21} in a more general setting.

      \begin{corollary}  \label{recoverkla}
         Suppose that $(\cD, \Sigma, \emph{\pentagon})$ is an $(n+2)$-angulated category and $\cB \subset \cD$ is an $n$-extension closed additive subcategory with $\cD(\Sigma \cB, \cB) = 0$.
         Then $(\cB, \cX_\cB)$ is an $n$-exact category with \seg, where $\cX_{\cB}$ is as defined above.
      \end{corollary}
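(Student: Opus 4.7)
The plan is to deduce the statement from the combination of \Cref{cor:thefirstcorollary}, \Cref{cor:nexact} and \Cref{thm:correspondence}, with the hypothesis $\cD(\Sigma \cB, \cB) = 0$ being the only extra ingredient required. By \Cref{cor:thefirstcorollary} there is an $n$-exangulated structure $(\cB, \GG_\cB, \fr_\cB)$ on $\cB$, where $\GG_\cB(C,A) = \cD(C, \Sigma A)$ and $\fr_\cB$ is obtained by truncating $(n+2)$-angles whose middle terms lie in $\cB$. By construction the class $\cX_\cB$ defined above is precisely the class of $\fr_\cB$-conflations, and since $\cD(C, \Sigma A)$ is a set, the small extension groups condition holds automatically once $n$-exactness is established.

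Next, I would show that every $\fr_\cB$-inflation is monic in $\cB$. Let $f = d^X_0 \colon A \to X_1$ be an $\fr_\cB$-inflation; then $f$ fits into an $(n+2)$-angle
\[ \widehat{X}_\bullet \colon \quad A \xrightarrow{f} X_1 \to \cdots \to X_n \to C \xrightarrow{\delta} \Sigma A \]
with $X_1, \dots, X_n \in \cB$ (and $A, C \in \cB$ by assumption). Given $g \in \cB(W, A)$ with $fg = 0$, rotation of $\widehat{X}_\bullet$ produces an $(n+2)$-angle starting with $\Sigma^{-1} C \xrightarrow{\Sigma^{-1} \delta} A \xrightarrow{f} X_1$, so applying $\cD(W, -)$ and using the long exact sequence yields $g = (\Sigma^{-1} \delta) \circ h$ for some $h \colon W \to \Sigma^{-1} C$. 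Using that $\Sigma$ is an automorphism, $\cD(W, \Sigma^{-1} C) \cong \cD(\Sigma W, C)$, and the latter vanishes by $\cD(\Sigma \cB, \cB) = 0$. Hence $h = 0$ and so $g = 0$, proving $f$ is monic. The dual argument, applied to a deflation $d^X_n \colon X_n \to C$ together with the rotated $(n+2)$-angle ending in $C \xrightarrow{\delta} \Sigma A$ and the hom-functor $\cD(-, W)$, shows that any $\fr_\cB$-deflation is epic; here the vanishing $\cD(\Sigma A, W) = 0$ is used directly.

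With monicity of inflations and epicity of deflations established, \Cref{cor:nexact} yields that $(\cB, \GG_\cB, \fr_\cB)$ is $n$-exact. By \Cref{thm:correspondence} it corresponds to an $n$-exact structure $\cX_{(\GG_\cB, \fr_\cB)}$ on $\cB$ with small extension groups, and by \Cref{rem:theremark} (combined with the identification of conflations used in the proof of \Cref{cor:extensionclosed2}) this class coincides with $\cX_\cB$. The main obstacle is administrative rather than conceptual: one must keep careful track of the direction of the shift in the rotation of the $(n+2)$-angle to ensure the vanishing hypothesis is applied in the correct form. Everything else reduces to plugging earlier machinery together.
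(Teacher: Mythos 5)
Your proposal is correct and follows essentially the same route as the paper: pass to the $n$-exangulated structure $(\cB,\GG_\cB,\fr_\cB)$ via \Cref{cor:thefirstcorollary}, use the rotated $(n+2)$-angle and the long exact Hom-sequence together with $\cD(\Sigma\cB,\cB)=0$ to see that $\fr_\cB$-inflations are monic and $\fr_\cB$-deflations are epic, and then conclude via \Cref{cor:nexact} and \Cref{thm:correspondence}. Your element-level verification that $fg=0$ forces $g$ to factor through $\Sigma^{-1}\delta$ is just the injectivity of $\cD(B,d^X_0)$ that the paper reads off from the exact sequence $\cD(B,\Sigma^{-1}C)\to\cD(B,A)\to\cD(B,X_1)$, so the arguments coincide.
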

      \begin{proof}
         Let $(\cD, \GG_{\Sigma}, \fr_{\text{\pentagon}})$ be the $n$-exangulated structure induced on $\cD$ via \Cref{prop:correspondence2} and $(\cB, \GG_\cB, \fr_\cB)$ be the $n$-exangulated structure induced on $\cB$ via \Cref{cor:thefirstcorollary} or equivalently \Cref{thm:extensionclosed}.

         The class of $\fr_{\cB}$-conflations is the class of $\fr_{\text{\pentagon}}$-conflations with terms in $\cB$, by \Cref{rem:theremark}.
         By \Cref{prop:correspondence2}, the class of $\fr_{\text{\pentagon}}$-conflations is the class of all sequences $X_\bullet$ as in (\ref{eqn:shortnangle}) such that there exists a corresponding $(n+2)$-angle $\widehat{X}_\bullet$ as in (\ref{eqn:nangle}), which is in $\text{\pentagon}$.
         We conclude that the class of $\fr_{\cB}$-conflations is $\cX_{\cB}$.

         We show that all $\fr_\cB$-inflations are monic in $\cB$.
         Indeed let $A, X_1 \in \cB$ and $d^X_0 \colon A \to X_1$ be an $\fr_\cB$-inflation.
         Then there is an $(n+2)$-angle as in (\ref{eqn:nangle}) with $X_2, \dots, X_n, C \in \cB$.
         Applying the functor $\cD(-, B)$ for $B \in \cB$ yields an exact sequence
         \[\begin{tikzcd}[ampersand replacement=\&, column sep = large]
            {\cD(B, \Sigma^{-1}C)} \& {\cD(B,A)} \& {\cD(B, X_1)} 
            \arrow["{\cD(B, \Sigma^{-1}\delta)}", from=1-1, to=1-2]
            \arrow["{\cD(B, d^X_0)}", from=1-2, to=1-3]
         \end{tikzcd}\]         
         by \cite[Propositon 2.5]{GKO13}.
         Using that $\cB \subset \cD$ is full and $\cD(B, \Sigma^{-1}C) \cong \cD(\Sigma B, C) = 0$ because $\cD(\Sigma \cB, \cB) = 0$, we conclude that $d^X_0$ is monic in $\cB$.
         Similarly, one can show that $\fr_{\cB}$-deflations are epic.

         By \Cref{thm:correspondence} we conclude that $(\cB, \cX_\cB)$ is $n$-exact with \seg.
     \end{proof}

      \appendix \section{Calculations}

      \begin{calculation}\label{calc:biproduct}
         \Cref{dgr:biproduct} is a biproduct diagram in $\text{\AE}^{n+2}_{(\cC, \EE)}$.
      \end{calculation}
      \begin{proof}
         We first prove identities, which will be used later in the proof. We have
         \begin{align*}
            \id_{A} &= p' g, & e &= g p', & e' &= \id_{X_2} - gp' = \id_{X_2} - e, & p'h &= 0
         \intertext{by definition. Notice that $e$ and $e'$ are idempotents. The above identities imply $eg = g$ and $p' e = p'$ as well as $eh = 0$ which imply}
            e'g &= 0, & p'e' &= 0, & e'h &= h. &&
         \intertext{Finally, $d^{X''}_2 d^{X''}_1 = 0$ since $X''$ is a complex and hence}
               d^X_2 g &= 0, & d^X_2 h &=0, & d^X_2 e' &= d^X_2 (\id_{X_2} - gp') = d^X_2. &&
         \end{align*}

         Now, it is clear that all columns of \Cref{dgr:biproduct} except the third one are biproduct diagrams in $\cA$.
         Concerning the third column, we have
         \[\text{$\left[\begin{smallmatrix}p' & 0\end{smallmatrix}\right]\left[\begin{smallmatrix} g \\ 0\end{smallmatrix}\right] = p' g = \id_A$ and $\left[\begin{smallmatrix}e' & g\end{smallmatrix}\right]\left[\begin{smallmatrix} e' \\ p'\end{smallmatrix}\right]= e' + gp' = e' + e = \id_{X_2}$}\]
         as well as
         \[\left[\begin{smallmatrix}e' \\ p'\end{smallmatrix}\right]\left[\begin{smallmatrix}e' & g \end{smallmatrix}\right] +\left[\begin{smallmatrix} g \\ 0 \end{smallmatrix}\right]\left[\begin{smallmatrix} p' & 0 \end{smallmatrix}\right] = \left[\begin{smallmatrix} (e')^2 & e'g \\ p' e' & p' g \end{smallmatrix}\right] + \left[\begin{smallmatrix} gp' & 0 \\ 0 & 0 \end{smallmatrix}\right] = \left[\begin{smallmatrix} e' + e & 0 \\ 0 & \id_{A} \end{smallmatrix}\right] = \id_{X_2 \oplus A}.\]
         Hence, all columns of \Cref{dgr:biproduct} are biproduct diagrams in $\cA$.
         
         To conclude that \Cref{dgr:biproduct} is a biproduct in $\text{\AE}^{n+2}_{(\cC,\EE)}$ we only need to show that that all squares commute.
         The two upper left squares commute since
\[\text{$
\left[\begin{smallmatrix} 0 \\ \id_{\smash{X_1'}} \end{smallmatrix}\right]
f
=
\left[\begin{smallmatrix} 0 \\ f \end{smallmatrix}\right]
= d^{X''}_0
$ and $
\left[\begin{smallmatrix} 0 & \id_{\smash{X_1'}} \end{smallmatrix}\right]
d^{X''}_0
=
\left[\begin{smallmatrix} 0 & \id_{\smash{X_1'}} \end{smallmatrix}\right]
\left[\begin{smallmatrix} 0 \\ f \end{smallmatrix}\right]
=
f
$.}\]
The two lower left squares commute since
\[\text{$
\left[\begin{smallmatrix} \id_A & 0 \end{smallmatrix}\right]
d^{X''}_0 =
\left[\begin{smallmatrix} \id_A & 0 \end{smallmatrix}\right]
\left[\begin{smallmatrix} 0 \\ f \end{smallmatrix}\right]
=
0
$}\]
and any morphism starting in the zero object is $0$.
The two, second to left, upper squares commute since
\[\text{$
\left[\begin{smallmatrix} e' \\ p'\end{smallmatrix}\right]
h
=
\left[\begin{smallmatrix} e'h \\ p'h\end{smallmatrix}\right]
=
\left[\begin{smallmatrix} h \\ 0 \end{smallmatrix}\right]
=
\left[\begin{smallmatrix} g & h \\ 0 & 0 \end{smallmatrix}\right]
\left[\begin{smallmatrix} 0 \\ \id_{\smash{X_1'}} \end{smallmatrix}\right]
=
d^{X''}_1
\left[\begin{smallmatrix} 0 \\ \id_{\smash{X_1'}} \end{smallmatrix}\right]
$}\] 
and 
\[\text{$
\left[\begin{smallmatrix} e' & g\end{smallmatrix}\right]
d^{X''}_1
=
\left[\begin{smallmatrix} e' & g \end{smallmatrix}\right]
\left[\begin{smallmatrix} g & h \\ 0 & 0 \end{smallmatrix}\right]
=
\left[\begin{smallmatrix} e'g & e'h\end{smallmatrix}\right]
=
\left[\begin{smallmatrix} 0 & h\end{smallmatrix}\right]
=
h\left[\begin{smallmatrix} 0 & \id_{\smash{X_1'}} \end{smallmatrix}\right]
$.}\]
The two, second to left, lower squares commute since
\[\text{$
\left[\begin{smallmatrix} p' & 0 \end{smallmatrix}\right]
d^{X''}_1
=
\left[\begin{smallmatrix} p' & 0 \end{smallmatrix}\right]
\left[\begin{smallmatrix} g & h \\ 0 & 0 \end{smallmatrix}\right]
=
\left[\begin{smallmatrix} p'g & p'h \end{smallmatrix}\right]
=
\left[\begin{smallmatrix} \id_{A} & 0 \end{smallmatrix}\right]
$ and $
\left[\begin{smallmatrix} g \\ 0 \end{smallmatrix}\right]
=
\left[\begin{smallmatrix} g & h \\ 0 & 0 \end{smallmatrix}\right]
\left[\begin{smallmatrix} \id_A \\ 0 \end{smallmatrix}\right]
=
d^{X''}_1
\left[\begin{smallmatrix} \id_A \\ 0 \end{smallmatrix}\right]
$.}\]
The two, third to left, upper squares commute, since
\[\text{$
\left[\begin{smallmatrix} d_2^X e' \\ p' \end{smallmatrix}\right]
=
\left[\begin{smallmatrix} d_2^X & 0 \\ 0 & \id_{A} \end{smallmatrix}\right]
\left[\begin{smallmatrix} e' \\ p'\end{smallmatrix}\right]
=
d^{X''}_2
\left[\begin{smallmatrix} e' \\ p'\end{smallmatrix}\right]
$ and $
d^{X''}_2
= 
\left[\begin{smallmatrix} d_2^X & 0 \\ 0 & \id_{A} \end{smallmatrix}\right]
=
\left[\begin{smallmatrix} d_2^X(e')^2 & d^X_2 e' g \\ p' e' & p'g \end{smallmatrix}\right]
=
\left[\begin{smallmatrix} d_2^X e' \\ p' \end{smallmatrix}\right]
\left[\begin{smallmatrix} e' & g \end{smallmatrix}\right]
$.}\]
Finally, the two, third to left, lower squares commute because any morphism ending in the zero object is $0$ and
\[0 = 
\left[\begin{smallmatrix} d_2^X g & 0 \\ 0 & 0 \end{smallmatrix}\right]
=
\left[\begin{smallmatrix} d_2^X & 0 \\ 0 & \id_{A} \end{smallmatrix}\right]
\left[\begin{smallmatrix} g \\ 0 \end{smallmatrix}\right]
=
d^{X''}_{2}
\left[\begin{smallmatrix} g \\ 0 \end{smallmatrix}\right].
\]
It is clear that the remaining squares commute.
It follows immediately that \Cref{dgr:biproduct} is a biproduct diagram in $\text{\AE}^{n+2}_{(\cC,\EE)}$.
      \end{proof}
      
      \begin{calculation}\label{calc:biproduct2}
         \Cref{dgr:biproduct2} is a biproduct diagram in $\text{\AE}^{3}_{(\cC, \EE)}$.
      \end{calculation}
      \begin{proof}
      By construction, all columns of \Cref{dgr:biproduct2} are biproduct diagrams in $\cA$, all squares except for two upper right squares commute and $(j')^\ast \delta = \delta'$.
      We only need to show that the two upper right squares commute, that the upper row of \Cref{dgr:biproduct2} is an $\EE$-attached complex, and that $(d_1^X i)^\ast \delta = 0$ and $(q')^\ast \delta' = \delta$ hold.
      As the columns of \Cref{dgr:biproduct2} are biproduct diagrams the identities
      \begin{align*}
        qj &= \id_{X_1'}, & ip + jq &= \id_{A \oplus X_1'}, & e &= d_1^X i p', & q'j' &= \id_{X_2'}, & e' &= \id_{X_2} - e = j' q'
      \end{align*}
      hold.
      We have $e d_1^X = (d_1^X i p') d_1^X = d_1^X (i p)$ as the two lower left squares commute.
      Hence,
      \[ j' (q'd_1^X j) = e' d_1^X j = (\id_{X_2} - e) d_1^X j =  d_1^X (\id_{A \oplus X_1'} - ip) j = d_1^X j q j = d_1^X j \]
      as well as
      \[ q' d_1^X = q'j'q' d_1^X = q' e' d_1^X = q' (\id_{X_2} - e) d_1^X = q' d_1^X (\id_{A \oplus X_1'} - ip) = (q' d^X_1 j) q \]
      show that the two upper right squares commute.
      We have $(d_1^X)^\ast \delta = 0 $ as $\langle X_\bullet, \delta \rangle$ is a $1$-exangle.
      Hence, $(q' d_1^X j)^\ast \delta' = (j' q' d_1^X j)^\ast \delta = (d_1^X j)^\ast \delta = 0$ and $(d_1^X i)^\ast \delta = 0$.
      In particular, the upper row of \Cref{dgr:biproduct2} is an $\EE$-attached complex.
      Finally, $(d_1^X)^\ast \delta = 0$ implies
      \[(q')^\ast \delta' = (j' q')^\ast \delta = (\id_{X_2} -e)^\ast \delta = \delta - (d^X_1 i p')^\ast \delta = \delta \]
      which completes the proof.
    \end{proof}

   \begin{acknowledgment}
      Many thanks to Peter Jørgensen for his mathematical and linguistic advice and corrections.
      Thanks to Johanne Haugland for inspiring me to write this paper.
      Also thanks to her for suggestions and email communication.
      Thanks to Amit Shah for pointing out the importance of condition (WIC) to me.
%      Otherwise some things that should not have been forgotten were lost.

      This work was supported by Aarhus University Research Foundation, grant no. AUFF-F-2020-7-16.
   \end{acknowledgment}

      \bibliographystyle{alpha}

\begin{thebibliography}{BTHSS22}
         \bibitem[BTHSS22]{BTHSS22}
         \newblock Raphael Bennett-Tennenhaus, Johanne Haugland, Mads Hustad Sandøy and Amit Shah, 
         \emph{The category of extensions and a characterisation of $n$-exangulated functors},
         Preprint,
         2022.
         \url{https://arxiv.org/abs/2205.03097}.
         
         \bibitem[BTS21]{BTS21}
         \newblock Raphael Bennett-Tennenhaus and Amit Shah,
         \emph{Transport of structure in higher homological algebra},
         Journal of Algebra,
         \textbf{574}:514-549,
         2021.
         
         \bibitem[Büh10]{Bueh10}
         \newblock Theo Bühler,
         \emph{Exact categories},
         Expositiones Mathematicae,
         \textbf{28}(1):1-69,
         2010.

         \bibitem[ES22]{ES22}
         \newblock Haruhisa Enomoto and Shunya Saito,
         \emph{Grothendieck monoids of extriangulated categories}
         Preprint,
         2022.
         \url{https://arxiv.org/abs/2208.02928}.

         \bibitem[GKO13]{GKO13}
         \newblock Christof Geiss, Bernhard Keller and Steffen Oppermann, 
         \emph{$n$-angulated categories},
         Journal für die reine und angewandte Mathematik (Crelles Journal), 
         \textbf{675}:101-120,
         2013.

         \bibitem[Hau21]{Hau21}
         Johanne Haugland,
         \emph{The Grothendieck Group of an n-exangulated Category},
         Applied Categorical Structures
         \textbf{29}(3):431–446,
         2021.

         \bibitem[HLN21]{HLN21}
         \newblock Martin Herschend, Yu Liu and Hiroyuki Nakaoka, 
         \emph{$n$-exangulated categories (I): Definitions and fundamental properties},
         Journal of Algebra,
         \textbf{570}:531-586,
         2021.

         \bibitem[HLN22]{HLN22}
         \newblock Martin Herschend, Yu Liu and Hiroyuki Nakaoka,
         \emph{$n$-Exangulated categories (II): Constructions from $n$-cluster tilting subcategories},
         Journal of Algebra,
         \textbf{594}:636-684,
         2022.

         \bibitem[HZ21]{HZ21}
         \newblock Jian He, Panyue Zhou,
         \emph{$n$-exact categories arising from $n$-exangulated categories},
         Preprint,
         2021.
         \url{https://arxiv.org/abs/2109.12954}.

         \bibitem[Jas16]{Jas16}
         \newblock Gustavo Jasso, 
         \emph{$n$-Abelian and $n$-exact categories},
         Mathematische Zeitschrift,
         \textbf{283}:703–759(3-4),
         2016.

         \bibitem[Kla21]{Kla21}
         \newblock Carlo Klapproth,
         \emph{$n$-Exact categories arising from $(n+2)$-angulated categories},
         Preprint,
         2021. 
         \url{https://arxiv.org/abs/2108.04596}.


         \bibitem[Kra15]{Kra15}
         \newblock Henning Krause,
         \emph{Krull–Schmidt categories and projective covers},
         Expositiones Mathematicae,
         \textbf{33}(4):535-549,
         2015.

         \bibitem[KMS22]{KMS22}
         \newblock Carlo Klapproth, Dixy Msapato and Amit Shah,
         \emph{Idempotent completions of $n$-exangulated categories},
         Preprint,
         2022. 
         \url{https://arxiv.org/abs/2207.04023}.
       
         \bibitem[Msa22]{Msa22}
         \newblock Dixy Msapato,
         \emph{The Karoubi envelope and weak idempotent completion of an extriangulated category},
         Applied Categorical Structures,
         \textbf{30}(3):499–535,
         2022.

         \bibitem[NP19]{NP19}
         \newblock Hiroyuki Nakaoka and Yann Palu,
         \emph{Extriangulated categories, Hovey twin cotorsion pairs and model structures},
         Cahiers de Topologie et Géométrie Différentielle Catégoriques,
         \textbf{60}(3):117–193, 
         2019.

         \bibitem[Qui73]{Qui73}
         \newblock Daniel Quillen,
         \emph{Higher algebraic {$K$}-theory {I}},
         in Algebraic {$K$}-theory, {I}: {H}igher {$K$}-theories, Lecture Notes in Mathematics,
         \textbf{341}:85-147,
         Springer, Berlin-New York
         1973.

         \bibitem[Tat22]{Tat22}
         \newblock Aran Tattar,
         \emph{Torsion structures, subobjects and unique filtrations in non-abelian categories},
         PhD thesis, 
         Universität zu Köln,
         2022.
         \url{http://kups.ub.uni-koeln.de/id/eprint/61013}.

         \bibitem[WW22]{WW22}
         \newblock Yucheng Wang and Jiaqun Wei,
         \emph{Ideal approximation in $n$-exangulated categories},
         Preprint,
         2022.
         \url{https://arxiv.org/abs/2207.13465}.

         \bibitem[Zho22]{Zho22}
         \newblock Panyue Zhou,
         \emph{$n$-extension closed subcategories of $(n+2)$-angulated categories},
         Archiv der Mathematik,
         \textbf{118}:375–382,
         2022.
      \end{thebibliography}
      
\end{document}